\newcommand{\fg}{\mathfrak {g}}
\newcommand{\fh}{\mathfrak {h}}
\newcommand{\fn}{\mathfrak {n}}
\newcommand{\ft}{\mathfrak {t}}
\newcommand{\fa}{\mathfrak {a}}
\newcommand{\fk}{\mathfrak {k}}
\newcommand{\fm}{\mathfrak {m}}
\newcommand{\fp}{\mathfrak {p}}
\newcommand{\fs}{\mathfrak {s}}
\newcommand{\fu}{\mathfrak {u}}
\newcommand{\fz}{\mathfrak {z}}
\newcommand{\fgc}{\mathfrak {g}_{\mathbb {C}}}
\newcommand{\si}{\sigma}
\newcommand{\sk}{\bigskip}
\newcommand{\lra}{\longrightarrow}
\newcommand{\caH}{{\cal H}}
\newcommand{\pa}{\partial}
\newcommand{\be}{\beta}
\newcommand{\de}{\delta}
\newcommand{\la}{\lambda}
\newcommand{\ra}{\rightarrow}
\newcommand{\im}{{\rm Im}}
\newcommand{\tr}{{\rm trace}\,}
\newcommand{\ch}{\mathcal{H}}
\newcommand{\ad}{{\rm ad}}
\newtheorem{theorem}[equation]{Theorem}
\newtheorem{lemma}[equation]{Lemma}
\newtheorem{corollary}[equation]{Corollary}
\newtheorem{proposition}[equation]{Proposition}
\newtheorem{remark}[equation]{Remark}
\def\sideremark#1{\ifvmode\leavevmode\fi\vadjust{\vbox to0pt{\vss
 \hbox to 0pt{\hskip\hsize\hskip1em
\vbox{\hsize2cm\tiny\raggedright\pretolerance10000 
 \noindent #1\hfill}\hss}\vbox to8pt{\vfil}\vss}}} 
\newcommand{\bz}{\mathbb {Z}}
\newcommand{\bc}{\mathbb {C}}
\newcommand{\bbC}{\mathbb {C}}
\newcommand{\bbR}{\mathbb {R}}
\newcommand{\br}{\mathbb {R}}
\newcommand{\bd}{\mathbb {D}}
\newcommand{\bbD}{\mathbb {D}}
\newcommand{\wh}{\widehat}
\newcommand{\ind}{\rm Ind}
\newcommand{\Ad}{\mathrm{Ad}}
\newcommand{\beq}{\begin{equation}}
\newcommand{\eeq}{\end{equation}}
\newcommand{\Dt}{\widetilde D}
\newcommand{\we}{\wedge}
\newcommand{\om}{\omega}
\newcommand{\Om}{\Omega}
\newcommand{\bl}{\mathbb {L}}
\newcommand{\fgg}{\mathfrak {g}}
\newcommand{\fhh}{\mathfrak {h}}
\newcommand{\ftc}{\mathfrak {t}_\bc}
\newcommand{\fmc}{\mathfrak {m}_\bc}
\newcommand{\fsc}{\mathfrak {s}_\bc}
\newcommand{\fkc}{\mathfrak {k}_\bc}
\newcommand{\blx}{\bl}
\newcommand{\caHH}{\caH^2}
\newcommand{\wt}{\widetilde}
\newcommand{\hi}{\frac{i}{2}}
\newcommand{\hH}{H^0}
\newcommand{\tT}{T^0}
\begin{document}

\begin{center}
{\Large \bf Partial Dirac Cohomology and Tempered Representations}
\end{center}

\sk

\begin{center}
{\large Meng-Kiat Chuah\footnote{Department of Mathematics, National Tsing 
Hua University, Hsinchu, Taiwan; 
research supported in part by the Ministry of Science and Technology of Taiwan;
e--mail {\tt chuah@math.nthu.edu.tw}.}, 
Jing-Song Huang\footnote{School of Science and Engineering, Chinese 
University of Hong Kong, Shenzhen, China; research supported in part by the 
Research Grant Council of Hong Kong SAR, China; e–mail
{\tt huangjingsong@cuhk.edu.cn}.} and
Joseph A. Wolf\,\footnote{Department of Mathematics, University of California,
Berkeley CA 94720-3840, USA; 
research partially supported by a Simons Foundation grant;
e--mail {\tt jawolf@math.berkeley.edu}.\newline
\phantom{Gra} {\it Key words:} real reductive group, tempered representation, 
Plancherel decomposition, Dirac cohomology, geometric quantization \newline
\phantom{Gra} {\it AMS subject class 2010:} 22E45, 22E46, 22F30, 53D50}}
\end{center}

\begin{center} 11 February 2022 \end{center}

\begin{abstract}
The tempered representations of a real reductive Lie group $G$
are naturally partitioned into series associated with
conjugacy classes of Cartan subgroups $H$ of $G$.
We define partial Dirac cohomology, apply it for geometric construction
of various models of these $H$--series representations, and show how this 
construction fits
into the framework of geometric quantization and symplectic reduction.
\end{abstract}

\section{Introduction}\label{sec1}
\setcounter{equation}{0}

Let $G$ be a real reductive Lie group.
Then every conjugacy class of Cartan subgroups $H \subset G$ defines a series
of unitary representations that enters into the Plancherel formula in an
essential manner.  For simplicity of exposition we assume
that $G$ is of Harish--Chandra class, and later we will
relax that condition to include groups such as the universal
covering group of $SU(p,q)$.  Then the Cartan subgroups $H$ defines cuspidal 
parabolic subgroups $MAN \subset G$, meaning that $H = TA$ where $T$ is a 
compact Cartan subgroup of $M$ and $A$ is split over $\bbR$, so $M$ has 
discrete series representations.
The $H$--series of unitary $G$-representations consists of the
unitarily induced ${{\ind}}_{MAN}^G (\eta \otimes e^{i\sigma} \otimes 1)$
where $\eta \in \wh{M}_{disc}$ (unitary equivalence classes of discrete
series representations of $M$), and $\sigma \in \fa^*$ (so that
$e^{i\sigma}$ is a unitary character on $A$).  Dirac cohomology was studied
in \cite{HP2002}, settling Vogan's Conjecture.  Dirac cohomology reveals
the infinitesimal characters of Harish-Chandra modules attached to the
$H$--series for $H$ as compact as possible, i.e. for the situation where
$T$ is a Cartan subgroup of a maximal compact subgroup $K \subset G$.
That is usually called the {\em fundamental series} of $G$, and it is the
{\em discrete series} just when $H = T$.

Here we study the partial Dirac cohomology with respect to $H$ in general,
and express it as a direct integral of $H$-series representations.
We say ``partial'' because we are using Dirac cohomology of representations
of the subgroup $M$ for the purpose of constructing representations of $G$.
As an application, we incorporate partial Dirac cohomology into geometric
quantization \cite{K1970} to construct unitary $G$-representations, and we
show that the occurrences of $H$-series is controlled by the image of the
moment map.  


There are two approaches.  One is to work directly with the class of general
real reductive Lie groups introduced and studied in \cite{W1974a} and
updated in \cite{W2018}.  The other is to first work with groups of
Harish--Chandra class, and then show how the results extend
{\em mutatis mutandis} to general real reductive Lie groups.  For clarity
of exposition we do the second of these.  Thus, in Sections \ref{sec2}
through \ref{sec6}, $G$ will be of Harish--Chandra class.  In
Sections \ref{sec7} through \ref{sec9}, $G$ will be a general real reductive 
Lie group.  

Our main results are stated in Section \ref{newsec2} as Theorems \ref{thm1}, 
\ref{thm2} and \ref{thm3} and as their
extensions to the class of general real reductive Lie groups.

The sections of this article are arranged as follows.

In Section \ref{newsec2} we develop the background material, give
a detailed introduction, and formulate our main results.

In Section \ref{sec2} we study the Plancherel decomposition
of $L^2(G/N)$ and use it to prove Proposition \ref{sat}.
In Section \ref{sec3} we use Proposition \ref{sat} to prove
Theorem \ref{thm1}.  

In Section \ref{sec4} we study several $L^2$-function spaces
and prove Proposition \ref{keyy}.
In Section \ref{sec5} we use Proposition \ref{keyy} to prove
Theorem \ref{thm2}.

In Section \ref{sec6} we perform symplectic reduction and prove
Theorem \ref{thm3}.

In Section \ref{sec7} we formulate the extension of the theorems
from groups of Harish-Chandra class to general real reductive Lie groups.

In Section \ref{sec8} we develop a method of reducing questions
of general real reductive Lie groups $G$ to the case where $G^\dagger$
has compact center.
In Section \ref{sec9} we use the tools of Section \ref{sec8}
to reduce the proofs of the theorems of Section \ref{sec7},
to the arguments used for groups of Harish--Chandra class.

\section{Statement of Results}\label{newsec2}
\setcounter{equation}{0}

As usual we use the lower case Gothic letters for
the real Lie algebras,
and add the subscript $\bc$ for their complexifications.
So for example $\fgg$ is the Lie algebra of $G$, and $\fgc$
is its complexification.

If $L$ is a Lie group then $\widehat{L}$ denotes its unitary dual.  If
$\pi \in \widehat{L}$ then $\mathcal{H}_\pi$ denotes its representation
space.  If $Z$ is a central subgroup of $L$ and $\zeta \in \widehat{Z}$
is a unitary character then
$\widehat{L}_\zeta = \{\pi \in \widehat{L} \mid \pi|_Z \text{ is a multiple
of } \zeta\}$.

The {\em Harish--Chandra class} of Lie groups consists of
all real reductive Lie groups $G$ such that
\begin{equation} \label{HC-class}
\begin{aligned}
  &\text{the identity component $G^0$ has finite index in $G$,}\\
  &\text{the derived group $G' = [G^0,G^0]$ of $G^0$ has finite center, and}\\
  &\text{if $g \in G$ then $\Ad(g)$ is an inner automorphism of $\fgc$.}
\end{aligned}
\end{equation}
The Lie algebra of $G$ is $\fg = \fg' \oplus \fz$ where $\fg' = [\fg,\fg]$
is the semisimple part and $\fz$ is the center.  The second condition in
(\ref{HC-class}) is that the semisimple part of $G^0$ (the connected 
subgroup of $G^0$ with Lie algebra $\fg' = [\fg,\fg]$) has finite center.

Let $H$ be a Cartan subgroup of $G$ and $\theta$ a Cartan involution that
stabilizes $H$.  See \cite{W1974a} (or \cite{W2018}) for existence of
Cartan involutions of groups of Harish--Chandra class.  Then the fixed
point set $K = G^\theta$ is a maximal
compact subgroup of $G$.  The Lie algebra $\fg$ of $G$ is the sum
$\fg = \fk + \fp$ of $\pm 1$ eigenspaces of $\theta$. We have decompositions
$\fh = \ft + \fa$ of the Lie algebra, $\ft = \fh \cap \fk$ and
$\fa = \fh \cap \fp$, and $H = T \times A$ on the group
level where $T = H \cap K$ and $A = \exp(\fa)$.  The centralizer
$Z_G(A) = M \times A$ where $T$ is a compact Cartan subgroup of $M$.
Further, $M$ is of Harish--Chandra class.  Choose a positive
$\fa$--root system $\Sigma_{\fg,\fa}^+$ on $\fg$.  Let $\fn$ denote the sum of
the positive $\fa$--root spaces and let $N = \exp(\fn)$.  Then
the parabolic subgroup $P = MAN$ of $G$ is {\em a cuspidal 
parabolic subgroup associated 
to} $H$.  Cuspidal parabolics are characterized by the fact that $M$
has a compact Cartan subgroup, in other words that $M$ has discrete series
representations.

When $G$ has discrete series representations, in other words when $G$
has a compact Cartan subgroup $T$, we avoid dealing with projective
representations as follows.  Replace $G$ by a double covering if necessary
so that $e^{\rho_\fg}$ is a well defined unitary character on $T^0$,
where $\rho_\fg := \tfrac{1}{2}\sum_{\alpha \in \Sigma_{\fgc,\ftc}^+}\,\alpha$\,.
Then we say that $G$ is {\em acceptable}.
Acceptability is independent of the choice of compact Cartan
subgroup $T$ because any two are $\Ad(G^0)$--conjugate, and independent of
the choice of positive $\ft$--root system because any two such $\rho_\fg$
differ by a linear combination of $\ft$--roots.  See \cite{W1974a} for the
construction of acceptable double covers.

We now assume that all our groups $M$ (including $G$ when it has a compact
Cartan subgroup) are acceptable.  Thus for each $M$,
$e^{\rho_\fm}$ is a well defined unitary character on $T^0$
where $\rho_\fm$ is half the sum over a positive $\ft_\mathbb{C}$--root
system of $\fmc$.

Since $M$ has the compact Cartan subgroup $T$ it has discrete series
representations $\eta_{\chi,\lambda}$ parameterized as follows.
$\lambda \in i\ft^*$ belongs to the weight lattice for finite dimensional
representations of $M$.  In other words it satisfies the integrality
condition that $e^\lambda$ is a well defined character on $T^0$.
Further, $\lambda$ is regular in the sense that
$\langle \lambda, \alpha \rangle
\ne 0$ for every root $\alpha \in \Sigma_{\fmc,\ftc}$.  Let
$\Delta_{M^0}$ denote the Weyl denominator,
$\Delta_{M^0} = \prod_{\alpha \in \Sigma_{\fmc,\ftc}^+}
(e^{\alpha/2} - e^{-\alpha/2})$.  The the corresponding discrete series
representation for $M^0$ and its distribution character are
\begin{equation}\label{eta0}
\eta^0_\lambda \in \wh{M^0}_{disc} \text{ with } 
	\Theta_{\eta^0_\lambda} 
	= \pm \tfrac{1}{\Delta_{M^0}} {\sum}_{w \in W_{M^0}} 
	\det(w)e^{w(\lambda)}
\text{ on } (M^0)''\cap T^0
\end{equation}
where $(M^0)''$ is the $M$--regular set in $M^0$.
The representation $\eta^0_\lambda$ 
has infinitesimal character with Harish--Chandra parameter
$\lambda$ and central character $e^{\lambda - \rho_\fm}|_{Z_{M^0}}$.
This last is why we require $M$ to be acceptable.
See Harish--Chandra \cite[Theorems 13 and 16]{HC1966}, or
\cite[Theorem 3.4.7]{W1974a} (\cite[Theorem 3.4.4]{W2018})
for an extension to general real reductive Lie groups.

Define $\varpi(\lambda):=
\prod_{\alpha \in \Sigma_{\fmc,\ftc}^+} \langle \alpha ,\lambda\rangle$.  Then
$\eta^0_\lambda$ has {\rm formal degree} $\deg(\eta^0_\lambda) =
|\varpi(\lambda)|$.  This replaces the degree that appears in
the Peter--Weyl Theorem for compact groups.

Suppose that $\chi \in \widehat{Z_M(M^0)}$ agrees with
$\eta^0_\lambda$ on the center $Z_{M^0}$, so
$\chi \in \widehat{Z_M(M^0)}_\zeta$ and $\eta^0_\lambda \in
\wh{M^0}_\zeta$ for the same $\zeta = e^{\lambda - \rho_\fm}|_{Z_{M^0}}
\in \widehat{Z_{M^0}}$.  Define $M^\dagger := Z_M(M^0) M^0$\,.  Then
\begin{equation}\label{etadag}
\eta^\dagger_{\chi,\lambda} :=
\chi \otimes \eta^0_\lambda \in \wh{M^\dagger}_{disc} \text{ with }
\Theta_{\eta^\dagger_{\chi,\lambda}}(zm) = \tr \chi(z)
	\Theta_{\eta^0_\lambda}(m)
\end{equation}
for $z \in Z_M(M^0)$ and $m \in M^0$\,.	
It has infinitesimal character of Harish--Chandra parameter
$\lambda$, central character $e^{\lambda - \rho_\fm}|_{Z_{M^0}}$\,,
and formal degree $\deg(\eta^\dagger_{\chi,\lambda}) =
\deg(\chi)|\varpi(\lambda)|$.

The discrete series representations of $M$ are the
$\eta_{\chi,\lambda} := {\ind}_{M^\dagger}^M(\eta^\dagger_{\chi,\lambda})
\in \wh{M}_{disc}$\,.  Their
distribution characters are supported in $M^\dagger$, where 
\begin{equation}\label{def-ds-M}
\eta_{\chi,\lambda} := {\ind}_{M^\dagger}^M(\eta^\dagger_{\chi,\lambda})
\text{ with }
\Theta_{\eta_{\chi,\lambda}}(zm) = {\sum}_{xM^\dagger \in M/M^\dagger}
	\tr \chi(x^{-1}zx) \Theta_{\eta^0_\lambda}(x^{-1}mx)
\end{equation}
for $z \in Z_M(M^0)$ and $m \in M^0$\,.  The representation 
$\eta_{\chi,\lambda}$ has infinitesimal character of Harish--Chandra parameter
$\lambda$ and formal degree $\deg(\eta_{\chi,\lambda}) =
|M/M^\dagger|\deg(\chi)|\varpi(\lambda)|$.
Every discrete series representation of $M$ is one of the $\eta_{\chi,\lambda}$.
Further $\eta_{\chi,\lambda} \simeq \eta_{\chi',\lambda'}$
just when they have equivalent restrictions to $M^\dagger$\,.

Let $\sigma \in \fa^*$.  Then $e^{i\sigma}$
is a unitary character on $A$, and
\begin{equation}\label{def-ds-G}
\pi_{\chi,\lambda,\sigma} :=
{\ind}_{MAN}^G(\eta_{\chi,\lambda} \otimes e^{i\sigma} \otimes 1)
= {\ind}_{M^\dagger AN}^G(\eta^\dagger_{\chi,\lambda}
	\otimes e^{i\sigma} \otimes 1)
\end{equation}
is a unitary representation of $G$.  These representations form
the $H$--{\sl series}.  The $H$--series depends only on the $G$--conjugacy class
of $H$.  The discrete series is the case where $H$
is compact (i.e. $H = T$), and the principal series is the case
where $H$ is as noncompact as possible.  Let $Car(G)$ denote the set
of $G$--conjugacy classes of Cartan subgroups.  It is finite.
Each $\{H\} \in Car(G)$ contributes a term in the Plancherel and
Fourier Inversion formulae of $G$, and those formulae are the
sums of those terms.  See Harish--Chandra (\cite{HC1975}, \cite{HC1976a},
\cite{HC1976b}).

The various $H$--series representations $\pi_{\chi,\lambda,\sigma}$ are
called the {\em standard tempered representations} of $G$.  
Plancherel-almost-all of them are irreducible.  For example if $\sigma$
is regular relative to the $\fa$--roots of $\fg$ then 
$\pi_{\chi,\lambda,\sigma}$ is irreducible.  In any case, every standard 
tempered representation of $G$ is a finite sum of irreducibles, and every
tempered representation of $G$ is a summand of a standard 
tempered representation.

Since $\Ad_G(N)$ is a unipotent group of linear transformations of $\fgg$,
it preserves Haar measure $dg$ and thus defines a $G$--invariant
measure $d(gN)$ on $G/N$.  Consider
$$
L^2(G/N) =
\left \{f : G/N \lra \bc \left | \int_{G/N} |f(g)|^2 \, d(gN) < \infty
	\right . \right \}.
$$
Since $M$ normalizes $N$, it acts on $G/N$ from the right,
and it preserves $d(gN)$.
So the action of $G \times M$ on $G/N$ leads to a unitary representation
of $G \times M$ on $L^2(G/N)$.
We now show how the Plancherel decomposition of
$L^2(M)$ as an $M\times M$ module induces a
Plancherel decomposition of $L^2(G/N)$ as $G \times M$ module.

Since $\fgg$ is real reductive, it has a non-degenerate
symmetric $\Ad(G)$--invariant bilinear form $b$.  We
choose $b$ to be the Killing form on $[\fgg,\fgg]$, negative definite
on $\fk$ and positive definite on $\fp$.

Let $\fmc = \ftc \oplus \fsc$\,, orthogonal direct sum.
Let $S$ be the spin module for the Clifford algebra $C(\fsc)$.
Let
\begin{equation}\label{mdirac}
D_M = D_{(\fmc, \fmc \cap \fkc)} + i D_{diag ; (\fmc \cap \fkc, \ftc)} 
\end{equation}
be the modified Dirac operator as defined in \cite[(1.1)]{CH2016}.
It induces a densely defined symmetric operator,
whose closure is a symmetric operator
\begin{equation} \label{dira0}
 \bd_M : L^2(G/N) \otimes S \lra L^2(G/N) \otimes S .
\end{equation}

Since $M$ is of Harish--Chandra class, $T = Z_M(M^0)T^0$, so every
irreducible unitary representation of $H = T \times A$ has form
\begin{equation}\label{mmm}
\chi_{\lambda,\sigma}:=\chi \otimes e^\lambda \otimes e^{i\sigma}\,\,,
	\lambda \in i\ft^*, \sigma \in \fa^*, \chi \in \wh{Z_M(M^0)}
	 \text{ agrees with $e^\lambda$ on $Z_{M^0}\,.$}
\end{equation}
If $G$ is a linear group then $H$ is commutative
and $\wh{H}$ is the set of all unitary characters on $H$.  In any case these
representations are finite dimensional.

The intersection $K \cap M$ is a maximal compact subgroup of $M$.
Denote its Weyl group by $W_{K \cap M}$.
Then $W_{K \cap M}=W(M,T)=N_M(T)/Z_M(T)\cong N_{K \cap M}(T)/Z_{K \cap M}(T)$.
In general if $\psi$ is a unitary representation we write $\ch_\psi$ for its
representation space.  If $\ch_1$ and $\ch_2$ are separable Hilbert spaces we
write $\ch_1 \wh{\otimes} \ch_2$ for their projective tensor product.  If one
or both of the $\ch_i$ is finite dimensional then it is the ordinary
tensor product.  Also note that if $\chi \in \wh{Z_M(M^0})$ then
$\bar\chi$ is the same as the dual $\chi^*$.  Further the action of
$W_{K \cap M}$ on $\wh{Z_M(M^0})$ is $w: \chi \mapsto \chi\cdot w^{-1}$.

\begin{theorem} \label{thm1}
Let $G$ be a real reductive Lie group of Harish--Chandra class.
Recall the notation {\rm (\ref{def-ds-M})} and {\rm (\ref{def-ds-G})} for the
representations $\eta_{\chi,\lambda}$ of $M$ and
$\pi_{\chi,\lambda,\sigma}$ of $G$.  Then $\bbD_M$ has kernel
$$
{\sum}_{\eta_{\chi, \lambda} \in\widehat M_{disc}}
	\left (\int_{\sigma\in\fa^*} \ch_{\pi_{\chi,\lambda,\sigma}}
		\, d\sigma \right )
\otimes \left ({\sum}_{w\in W_{K \cap M}}
        \ch_{(\bar\chi\cdot w^{-1} \otimes e^{-w\lambda})} \right ),
$$
and the natural action of $G \times T$
on that kernel is the direct integral
$$
{\sum}_{\eta_{\chi, \lambda} \in\widehat M_{disc}}
	\left (\int_{\sigma\in\fa^*} \pi_{\chi,\lambda,\sigma}
		\, d\sigma \right )
\otimes \left ({\sum}_{w\in W_{K \cap M}}
       (\bar\chi\cdot w^{-1} \otimes e^{-w\lambda}) \right ).
$$
\end{theorem}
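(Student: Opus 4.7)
The plan is to split the proof into (a) a Plancherel-type decomposition of $L^2(G/N)$ as a $G\times MA$-module, and (b) a fiberwise computation of $\ker D_M$. For (a), I use induction by stages, $\ind_N^G 1 = \ind_{MAN}^G \ind_N^{MAN}1 = \ind_{MAN}^G L^2(MA)$, together with Plancherel for $L^2(M)$ as $M\times M$-bimodule and Fourier inversion on $A$, to obtain
\begin{equation*}
L^2(G/N) \;\cong\; \int_{\fa^*} \sum_{\eta\in\wh M_{temp}} \ch_{\pi_{\eta,\sigma}} \otimes \bigl(\ch_\eta^* \otimes \bc_{-i\sigma}\bigr)\, d\mu_M(\eta)\,d\sigma,
\end{equation*}
where $\pi_{\eta,\sigma}:=\ind_{MAN}^G(\eta\otimes e^{i\sigma}\otimes 1)$, the left $G$-action sits on $\ch_{\pi_{\eta,\sigma}}$ and the right $MA$-action sits on the second factor (the modular function of $MAN$ is absorbed into the normalization of the induction). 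Since $\bbD_M$ is built from right infinitesimal $\fm$-translations it commutes with $G\times A$, so it acts only on $\ch_\eta^*\otimes S$ in each summand, and its kernel decomposes fiberwise as $\ch_{\pi_{\eta,\sigma}}\otimes\ker\bigl(D_M|_{\ch_\eta^*\otimes S}\bigr)$.

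The key input for (b) is a dichotomy: this fiberwise kernel vanishes unless $\eta\in\wh M_{disc}$, and for a discrete series it is an explicit sum of $T$-characters. Vanishing off the discrete series comes from a Parthasarathy-type identity expressing $D_M^2$ as a positive combination of Casimirs of $\fm$, $\fk\cap\fm$ and $\ft$: on any irreducible tempered $M$-module, $D_M^2$ acts as a scalar that is strictly positive outside the discrete series. For $\eta=\eta_{\chi,\lambda}$, I first treat the identity component: on $\eta^0_\lambda$, the iterated Dirac cohomology (from $\fm$ down to $\fm\cap\fk$ using \cite{HP2002}, and then from $\fm\cap\fk$ down to $\ft$ using Kostant's cubic Dirac operator, with the modification in \eqref{mdirac} absorbing the usual $\rho$-shifts; cf.~\cite{CH2016}) identifies the kernel as the $T^0$-module $\sum_{w\in W_{K\cap M}}\bc_{w\lambda}$. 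Passing from $M^0$ to $M^\dagger=Z_M(M^0)M^0$ via \eqref{etadag} twists each summand by $\chi$, and the finite induction $M^\dagger\hookrightarrow M$ in \eqref{def-ds-M} distributes the result across coset representatives of $M/M^\dagger$; finally, dualizing, because $\ch_\eta^*$ (rather than $\ch_\eta$) sits in the Plancherel expansion above, replaces $\chi\otimes e^{w\lambda}$ by $\bar\chi\cdot w^{-1}\otimes e^{-w\lambda}$, yielding the $T$-module appearing in the statement.

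The main obstacle is to coordinate two independent layers of bookkeeping: the disconnectedness of $M$, which forces careful tracking of $\chi\in\wh{Z_M(M^0)}$ through the tower $M^0\subset M^\dagger\subset M$ and through the action $w\colon\chi\mapsto\chi\cdot w^{-1}$ of $W_{K\cap M}=N_{K\cap M}(T)/Z_{K\cap M}(T)$; and the normalization conventions in \eqref{mdirac} and in \cite{HP2002,CH2016}, where the precise choice of modified Dirac operator, $\rho$-shift and contragredient is exactly what produces the exponents $e^{-w\lambda}$ (rather than, say, $e^{w\lambda-\rho_\fm+\rho_{\fk\cap\fm}}$). Once those points are in place, assembling the integral over $\sigma\in\fa^*$ and the sum over $\eta_{\chi,\lambda}\in\wh M_{disc}$ recovers both the kernel and its $G\times T$-module structure as stated.
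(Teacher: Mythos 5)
Your proposal follows essentially the same route as the paper: a Plancherel decomposition of $L^2(G/N)$ obtained by induction in stages through $MAN$ together with Fourier analysis on $A$ (the paper's Proposition \ref{sat}, proved via the identification $(G\times M)/NM_{diag}\cong G/N$), followed by a fiberwise analysis of $\ker\bbD_M$, reduction to the discrete series, and the computation of the Dirac cohomology of $\eta_{\chi,\lambda}$ through the tower $M^0\subset M^\dagger\subset M$ exactly as in Proposition \ref{dbar}. The bookkeeping you describe for $\chi$, the $W_{K\cap M}$-action, and the dualization producing $\bar\chi\cdot w^{-1}\otimes e^{-w\lambda}$ matches the paper's Theorem \ref{key} and (\ref{kerdm}).

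The one step where your justification diverges, and where as written it does not hold up, is the vanishing of the fiberwise kernel off the discrete series. You assert that a Parthasarathy-type identity makes $D_M^2$ act ``as a scalar that is strictly positive'' on every irreducible tempered $M$-module outside the discrete series. First, $D_M^2$ does not act by a scalar on an irreducible module: the identity expresses it in terms of Casimirs of $\fm$, $\fk\cap\fm$ and $\ft$, and the latter two are not scalars on $\caH_\eta\otimes S$, only on its $K\cap M$- (resp.\ $T$-) isotypic pieces. Second, strict positivity on all non-discrete tempered irreducibles is more than is true or needed: a tempered representation with real infinitesimal character (e.g.\ in the closure of the discrete series parameters) need not have $D_M^2$ bounded away from zero. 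The paper's Lemma \ref{dsDM} argues differently and more safely: nonzero Dirac cohomology forces the Harish--Chandra module to have real infinitesimal character (via Proposition \ref{dbar}), and the non-discrete tempered representations of $M$ with real infinitesimal character form a set of Plancherel measure zero, hence contribute nothing to the direct integral. You should replace your pointwise positivity claim by this measure-zero argument; with that repair the rest of your outline goes through as in the paper.
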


We shall incorporate partial Dirac cohomology into the setting of
symplectic geometry.
In particular, we consider geometric quantization on
\[ X = G \times \fhh ,\]
where a symplectic manifold $(X,\om)$ with symmetry leads to
a unitary representation $\pi_{(X,\om)}$ \cite{K1970}.  For our case,
compare \cite{W1974b}.

There is a natural action of $G \times G$ on $X$ given by the
left and right actions of $G$.
In Theorem \ref{assam}, we recall a systematic construction of some
$(G \times H)$--invariant symplectic forms $\om$ on $X$.
There is a complex line bundle $\blx$ over $X$ whose Chern class is the
cohomology class of $\om$, equipped with a connection whose curvature is $\om$,
and also equipped with an invariant Hermitian structure.
One may use the Hermitian structure and the invariant measure on $X$
to construct a unitary representation of $G \times T$ on the Hilbert space
of $L^2$-sections of $\bl$.
That space of $L^2$-sections is too big to yield an irreducible representation,
so extra conditions are imposed, a process called polarization that cuts the
number of variables in half.  For example holomorphic sections of $\blx$
would be considered if $X$ is complex.  Here $X$ is not complex, but it has
a partial complex structure in the sense that the fibers of
\[ \pi: X \lra G/H \]
are complex (see (\ref{dapat})).
A section of $\blx$ is called $\pi$--{\it holomorphic} or {\it partially
holomorphic} if it is holomorphic on each fiber of $\pi$.
In \cite{C2014}, using the partial Dolbeault cohomology introduced by one of us
(\cite{W1973}, \cite{W1974a}, or see \cite{W2018}), this idea is applied to
construct some of the unitary representations
$\pi_{\chi,\lambda,\sigma}\otimes (\bar\chi\cdot w^{-1} \otimes e^{-w\lambda})$
on higher degree cohomology.
We shall apply the Dirac operator as in \cite{W1974c} to simplify
the polarization process, constructing $\caH_{(X,\om)}$
without involving higher degree Dolbeault cohomology.

Consider the inclusion
\[ \imath : G \lra X \;,\,
\imath(g) = (g,0) .\]
Then $\imath^* \bl$ is a line bundle on $G$.
Let $L^2(\bl)^N$ (resp. $L^2(\imath^* \bl)^N$)
denote the right $N$-invariant sections $f$
of $\bl$ (resp. $\imath^* \bl$) such that $(f,f)$ is integrable over
$\int_{G/N \times \fh} (\cdot) \, d(gN) \, dx$ 
(resp. $\int_{G/N} (\cdot) \, d(gN)$).
As in (\ref{dira0}), we have a Dirac operator
$\bbD_\bl$ on $L^2(\imath^* \bl)^N \otimes S$.
Define
\[
\begin{aligned}
&\caH_{(X,\om)} = \{ f \in L^2(\bl)^N \otimes S
\mid \imath^* f \in {\rm Ker}\, \bbD_\bl
\mbox{ and } f \mbox{ is $\pi$--holomorphic}\}, \\
&\pi_{(X,\om)}: \text{ natural representation of } G \text{ on }
\caH_{(X,\om)\,.}
\end{aligned}
\]
The action of $G \times H$ on the symplectic manifold $(X,\om)$
is Hamiltonian.  In particular the right $H$ action has a
canonical moment map \cite{GS1984}
\[ \Phi : X \lra \fh^* .\]
We modify it by $i$ so that the image set satisfies
$\im(i \Phi) \subset i \fh^*$.

By Theorem \ref{assam} and (\ref{cano}), each
$\la + i\sigma \in \im(i \Phi)$ is $MA$--regular,
so it determines the family of all representations
$\eta_{\chi,\lambda}\otimes e^{i\sigma} \in \wh{M}_{disc} \otimes \wh{A}$
as described in (\ref{def-ds-M}).
Thus it determines the family of all $H$--series representations
$\pi_{\chi,\lambda,\sigma}$ of $G$, as in (\ref{def-ds-G}).
Recall $\rho_\fa$, half the sum of positive $\fa$-roots
relative to $N$. Let $d \la$ be the counting measure.

\begin{theorem} \label{thm2}
Let $G$ be a real reductive Lie group of Harish--Chandra class.
As a unitary representation space for $G \times T$,
\[
\caH_{(X, \om)} \cong 
\int_{\tiny \begin{array}{l} \la + i\sigma \in \im(i \Phi)\\
      \exp(\la + i\sigma) \in \wh{H^0} \end{array}}
{\sum}_{\tiny \chi \in \widehat{Z_M(M^0)}_\lambda}
  \left ( {\ind}_P^G(\caH_{\bar\chi \otimes \eta^0_{-\la}}
       \otimes e^{-i\sigma +\rho_\fa} \otimes 1)
       \otimes \ch_{(\chi\otimes e^\lambda)} \right ) \, d \la \, d \si
\]
where $\widehat{Z_M(M^0)}_\lambda$ denotes the elements of $\widehat{Z_M(M^0)}$
that agree with $e^\lambda$ on $Z_{M^0}$\,.
\end{theorem}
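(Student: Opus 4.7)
The plan is to combine two decompositions: a Fourier decomposition of $\caH_{(X,\om)}$ over the right $H$-action, whose support is controlled by the moment map and by partial holomorphicity; and the Dirac-kernel decomposition on the base $G$, supplied by Theorem \ref{thm1}.

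First, I exploit the right $H$-symmetry of $(X,\om)$ and of $\bl$. The hypothesis that $\bl$ carries a Hermitian connection with curvature $\om$, combined with $\pi$-holomorphicity along the fibers of $\pi: X \to G/H$, forces a section to restrict on each fiber to an exponential whose $H$-weight is dictated by the moment map. By the standard Kostant--Souriau prequantization criterion (using the explicit formula for $\om$ from Theorem \ref{assam} and (\ref{cano})), such weights must lie in $\im(i\Phi)$ and satisfy $\exp(\la+i\si) \in \wh{H^0}$. This produces the measurable direct integral
\[
\caH_{(X,\om)} \;\cong\; \int_{\substack{\la+i\si \,\in\, \im(i\Phi)\\ \exp(\la+i\si)\in \wh{H^0}}}
\bigl(\ker \bd_\bl\bigr)_{\la,\si} \; d\la \, d\si,
\]
where $(\ker \bd_\bl)_{\la,\si} \subset L^2(\imath^* \bl)^N \otimes S$ is the $(\la,\si)$-weight component for the right $H = T \times A$ action, retaining the left $G$-action.

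Second, I apply Theorem \ref{thm1} to each weight slice. Writing the sum in Theorem \ref{thm1} with dummy variable $\chi'$, the external tensor factor is $\ch_{\bar{\chi'}\cdot w^{-1} \otimes e^{-w\la'}}$. Isolating the $(\la,\si)$-weight piece inside $\ker\bd_\bl$ forces $w=1$, $\la' = -\la$, and $\si' = -\si$, and pairs $\chi'$ with precisely those characters of $Z_M(M^0)$ whose restriction to $Z_{M^0}$ equals $e^{-\la+\rho_\fm}|_{Z_{M^0}}$. Reindexing by $\chi := \bar{\chi'}$, the external factor becomes $\ch_{\chi \otimes e^\la}$ with $\chi$ ranging over $\wh{Z_M(M^0)}_\la$, while the internal Dirac-kernel contribution becomes $\ch_{\pi_{\bar\chi,-\la,-\si}}$.

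Third, I unwind $\pi_{\bar\chi,-\la,-\si}$ via (\ref{def-ds-M}) and (\ref{def-ds-G}). By definition,
\[
\pi_{\bar\chi,-\la,-\si} = \ind_{M^\dagger AN}^G\bigl(\bar\chi \otimes \eta^0_{-\la} \otimes e^{-i\si}\otimes 1\bigr);
\]
the factor $e^{\rho_\fa}$ appearing in the theorem is the modular-function shift that rewrites this $L^2(G/N)$-style induction as the normalized unitary induction $\ind_P^G$ from the cuspidal parabolic $P = MAN$. Assembling the three steps reproduces the stated direct integral.

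The main obstacle is the first step. Linking partial holomorphicity with the prequantization data to pin the fiberwise $H$-weight to the moment-map value (and to force integrality) is the geometric-quantization heart of the argument. Everything downstream is a fairly mechanical application of Theorem \ref{thm1} plus a normalization computation, whereas the moment-map/polarization interplay requires detailed use of the explicit form of $\om$ supplied by Theorem \ref{assam}.
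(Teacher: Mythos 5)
Your overall architecture coincides with the paper's: decompose $\caH_{(X,\om)}$ into its right--$\hH$ weight components, show the support is $\{\mu=\la+i\si\in\im(i\Phi)\ :\ e^\mu\in\wh{\hH}\}$, and then identify each weight slice of ${\rm Ker}\,\bbD_M$ via Theorem \ref{thm1} with $w=1$ and $\la$ replaced by $-\la$, the $\rho_\fa$ coming from the normalization of unitary induction. Your second and third steps are exactly what the paper does in (\ref{adzz}).

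The gap is in your first step --- precisely the one you flag as the obstacle. The support condition $\mu\in\im(i\Phi)$ is not delivered by the Kostant--Souriau prequantization criterion: that criterion governs integrality (existence of $\bl$, i.e.\ the condition $e^\mu\in\wh{\hH}$), and here $\om$ is exact so it imposes nothing; it cannot see which weights survive the $L^2$ condition. What actually pins the support to the moment-map image is growth in the noncompact fiber directions: the paper trivializes $\bl$ by the invariant partially holomorphic section $f_0$ with $(f_0,f_0)(g,x)=e^{-F(x)}$ (Proposition \ref{spee}), so that via (\ref{aba})--(\ref{sees}) a partially holomorphic $L^2$ section restricts on each fiber $\hH\times\fh\cong\tT_\bc\times\fa_\bc$ to an element of the weighted Bergman space $\caHH_{(\hH\times\fh,e^{-F})}$, and the decomposition of that space into characters $\bc_\mu$ supported exactly on $\mu\in\im(\hi F')=\im(i\Phi)$ is Theorem \ref{cied} --- a Paley--Wiener/Legendre-transform statement, not an integrality one. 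Beyond that, your identification of the weight slices with $({\rm Ker}\,\bbD_M)_\mu$ requires (i) injectivity of the restriction $\imath^*$ to $G/N\times\{0\}$ (fiberwise analytic continuation), and (ii) the converse assertion that every $\int_J f_\mu\,d\mu$ with $J\subset\im(\hi F')$ bounded extends to a square-integrable partially holomorphic section; these are the content of Proposition \ref{keyy}, whose norm estimates (\ref{jjaa}) and (\ref{shee}) constitute the real work and are absent from your sketch.
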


In Corollary \ref{spc}, we use the case $A=1$ of Theorem \ref{thm2}
to correct an error in \cite[Theorem B]{CH2016}.

According to Gelfand, a {\em model} of a compact Lie group
is a multiplicity-free unitary representation
which contains every equivalence class of
irreducible representation \cite{GZ1984}.
This notion has been extended to models of various series
of unitary $G$-representations,
including the $H$-series \cite[\S4]{C2014}.
In Remark \ref{rkm}, we recall this idea and
briefly sketch how Theorem \ref{thm2} leads to models of $H$-series.

We perform symplectic reduction \cite{MW1974}.
Let $\mu \in \mbox{Im}(i\Phi) \subset i\fhh^*$.
There exists a unique $v \in \fhh$ such that
$\Phi^{-1}(\mu) = G \times \{v\}$. Let
\beq \label{ijij}
 \imath : \Phi^{-1}(\mu) \hookrightarrow X \text{ and }
\jmath : \Phi^{-1}(\mu) \lra G/\hH
\eeq
respectively be the natural inclusion and fibration by $\hH$.
Then there is a unique $G$-invariant symplectic form $\om_\mu$
on $G/\hH$ such that $\jmath^* \om_\mu = \imath^* \om$.
Write $X_\mu = G/\hH$. The process
\[ (X, \om) \leadsto (X_\mu, \om_\mu) \]
is called symplectic reduction with respect to $\mu$, and
$(X_\mu, \om_\mu)$ is called the symplectic quotient.

Suppose that $e^\mu \in \wh{H^0}$.
We use the spinor bundle and Dirac cohomology to construct
a unitary representation $\pi_{(X_\mu,\om_\mu)}$ of $G$ in (\ref{yamm}).
Let $\caH_{(X, \om)}$ be the representation space and
$\caH_{(X, \om),\mu}$ its $\mu$-component in the
direct integral decomposition of $\caH_{(X,\om)}$.
The next theorem says that our construction
satisfies the principle {\it quantization commutes with reduction}
proposed by Guillemin and Sternberg \cite{GS1982}.

\begin{theorem}\label{thm3}
Let $G$ be a real reductive Lie group of Harish--Chandra class.
Suppose that $\mu \in \im(i \Phi)$ with $e^\mu \in \wh{H^0}$.
As unitary representations of $G$,
\[
\pi_{(X_\mu, \om_\mu)} \cong \pi_{(X, \om),\mu} \text{ and }
\caH_{(X_\mu, \om_\mu)} \cong \caH_{(X, \om),\mu} .
\]
\end{theorem}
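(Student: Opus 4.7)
The plan is to identify both sides of the theorem with the fiber at $\mu$ of the direct integral decomposition in Theorem \ref{thm2}, and to produce the isomorphism explicitly via the restriction-descent operation associated to the diagram (\ref{ijij}). Write $\mu = \la + i\sigma$ with $\la \in i\ftt^*$ and $\sigma \in \faa^*$; the hypothesis $e^\mu \in \wh{H^0}$ is exactly what is needed for $(\la,\sigma)$ to contribute in Theorem \ref{thm2}. Extracting that fiber yields
\[
\caH_{(X,\om),\mu} \;=\; {\sum}_{\chi \in \widehat{Z_M(M^0)}_\la} \caH_{\ind_P^G(\bar\chi \otimes \eta^0_{-\la} \otimes e^{-i\sigma+\rho_\fa} \otimes 1)} \otimes \ch_{\chi \otimes e^\la},
\]
and $\pi_{(X,\om),\mu}$ is its $G$-factor.

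Next I would unwind the construction of $\pi_{(X_\mu, \om_\mu)}$ from (\ref{yamm}) on the reduced manifold $X_\mu = G/\hH$. Since $\Phi^{-1}(\mu) = G \times \{v\}$ is a principal $\hH$-bundle over $X_\mu$ via $\jmath$, the pullback $\imath^*\bl$ descends along $\jmath$ to a line bundle $\bl_\mu$ on $X_\mu$ by restricting to $e^\mu$-equivariant sections for the $\hH$-action, and it carries an induced connection of curvature $\om_\mu$. The partial complex structure descends as well: the fibration $\pi: X \to G/H$ induces a fibration $X_\mu \to G/H$ with complex fibers isomorphic to $M^0/T^0$, so the $\pi$-holomorphy condition transfers. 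The Dirac operator $\bbD_\bl$ restricts and descends to a Dirac operator $\bbD_{\bl_\mu}$ on $L^2(\bl_\mu) \otimes S$, and (\ref{yamm}) reads
\[
\caH_{(X_\mu, \om_\mu)} \;=\; \{f \in L^2(\bl_\mu)^N \otimes S \mid \bbD_{\bl_\mu} f = 0 \text{ and } f \text{ is $\pi$-holomorphic}\}.
\]

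The unitary $G$-equivariant isomorphism $\caH_{(X_\mu, \om_\mu)} \cong \caH_{(X, \om),\mu}$ would then be produced by the map $f \mapsto \jmath_*(\imath^* f)$, which restricts a section on $X$ to $\Phi^{-1}(\mu)$ and then descends it along the principal $\hH$-bundle $\jmath$. Imposing the $e^\mu$-weight condition for the $\hH$-action together with $\pi$-holomorphy selects precisely the parameter $(\la,\sigma)$; the Dirac-cohomology computation along the complex fiber $M^0/T^0$ in the spirit of \cite{HP2002} and \cite{CH2016} recovers the factor $\eta^0_{-\la}$, while the horizontal $G$-direction along $\Phi^{-1}(\mu)$ reconstructs the inducing datum $e^{-i\sigma+\rho_\fa}$ and hence the induced representation $\ind_P^G(\eta^0_{-\la} \otimes e^{-i\sigma+\rho_\fa} \otimes 1)$. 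The sum over $\chi \in \widehat{Z_M(M^0)}_\la$ reflects the component group of $Z_M(M^0)$. $G$-equivariance is immediate from the $G$-equivariance of $\imath$ and $\jmath$ in (\ref{ijij}), so both statements of the theorem follow at once.

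The main obstacle is the Guillemin-Sternberg principle itself in this Dirac-cohomological form: showing that restriction to $\Phi^{-1}(\mu)$ followed by descent along $\jmath$ gives a unitary bijection between the kernel of $\bbD_\bl$ on the $e^\mu$-weight subspace of $\imath^*\bl \otimes S$ and the kernel of $\bbD_{\bl_\mu}$ on $X_\mu$. This reduces to verifying compatibility of the spin module $S$ (built from $\fsc$) with the symplectic reduction, and to showing that derivatives transverse to the level set $\Phi^{-1}(\mu)$ contribute nothing to the Dirac kernel once the $e^\mu$-weight condition is imposed. Once this commutativity between quantization and reduction is in hand, the remainder of the argument is a direct identification of Hilbert spaces and $G$-actions.
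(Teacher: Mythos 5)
Your opening move --- extract the $\mu$-fiber of the direct integral in Theorem \ref{thm2} and match it against the quantization of the reduced space --- is exactly the paper's strategy. But the way you close the loop does not work as written, and the step you defer is the entire content of the theorem. You describe the reduced quantization as the kernel of a Dirac operator on $N$-invariant, $\pi$-holomorphic $L^2$-sections of a line bundle $\bl_\mu$ over $X_\mu=G/H^0$, with a residual fibration over $G/H$ having complex fibers $M^0/T^0$. That is not the construction in (\ref{yamm}): there, $\caH_{(X_\mu,\om_\mu)}$ is defined as ${\rm Ker}\,D_\mu$ for the Dirac operator on $L^2(B_\mu)$, where $B_\mu=G\times_\mu S$ is the spinor bundle over $G/H^0N$ whose $L^2$-sections are identified in (\ref{indi}) with the $\mu$-component $(L^2(G/N)\otimes S)_\mu$ --- i.e.\ with functions $\psi:G/N\to S$ satisfying $\psi(gh)=e^\mu(h)\psi(g)$. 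No line bundle on $X_\mu$, no $\pi$-holomorphy condition, and no complex fibers survive on the reduced space; the $\fh$-directions and the holomorphy are gone after reduction, by design.

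Because of this, the "main obstacle" you flag at the end --- that restriction to $\Phi^{-1}(\mu)$ followed by descent gives a unitary bijection onto the kernel of the reduced Dirac operator, and that transverse derivatives contribute nothing --- is precisely the assertion being proved, and your proposal leaves it unestablished. In the paper this difficulty evaporates: (\ref{indi}) makes $L^2(B_\mu)\cong(L^2(G/N)\otimes S)_\mu$ essentially tautological, so ${\rm Ker}\,D_\mu\cong{\rm Ker}\,\bd_{M,\mu}$ is (\ref{tong}), and the chain $\caH_{(X,\om),\mu}\cong\sum_{\chi}{\ind}_P^G(\caH_{\eta_{\chi,-\la}}\otimes e^{-i\si+\rho_\fa}\otimes 1)\cong{\rm Ker}\,\bd_{M,\mu}\cong{\rm Ker}\,D_\mu=\caH_{(X_\mu,\om_\mu)}$ closes by Theorem \ref{thm2}, Theorem \ref{thm1}, (\ref{tong}) and (\ref{yamm}) in that order. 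The issues about the $\fh$-transverse directions and the weight condition are not re-proved at this stage; they were already absorbed into Proposition \ref{keyy} and Theorem \ref{thm2}. To repair your argument, replace your descent construction of $\bl_\mu$ by the paper's definition of $B_\mu$ and $D_\mu$, and observe that the identification you were worried about is then immediate rather than a new analytic statement.
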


Finally, we show how our results extend from the Harish-Chandra class
of Lie groups $G$ to the class of general real reductive groups
introduced in \cite{W1974a}.
\begin{equation} \label{gen-real-class}
\begin{aligned}
  &\text{(a) the Lie algebra $\fgg$ of $G$ is reductive}, \\
  &\text{(b) if $g \in G$ then $\Ad(g)$ is an inner automorphism of
	$\fgc$, and}\\
  &\text{(c) $G$ has a closed normal abelian subgroup $Z$ such that}\\
      &\phantom{XXii}  (i)\,\, Z \text{ centralizes } G^0\,,
		\text{ i.e. } Z \subset Z_G(G^0) \\
      &\phantom{XXi}  (ii)\,\, |G/ZG^0| < \infty \text{ and } \\
      &\phantom{XX}  (iii)\,\, Z \cap G^0 \text{ is co-compact in } Z_{G^0}\,.
\end{aligned}
\end{equation}
These conditions are inherited by reductive components of cuspidal
parabolic subgroups, and the class of groups that satisfy them includes
both Harish--Chandra's class and all connected real semisimple Lie
groups.  See \cite[\S 0.3]{W1974a} for details.
\vskip 0.5cm

\section{A Plancherel decomposition for $L^2(G/N)$}\label{sec2}
\setcounter{equation}{0}

In this section, we obtain a Plancherel decomposition of $L^2(G/N)$
as $G \times M$ module.  More precisely, we show that
parabolic induction from $M$ to $G$ on the left side
Plancherel decomposition of $L^2(M)$ (as a  $M\times M$ module)
gives a  $G\times M$ module  Plancherel decomposition of $L^2(G/N)$.

We first recall the general setting of induced representations.
Let $X$ be a Lie group.
Let $dx$ be the left invariant measure on $X$, and it is unique up to
positive scalar multiplication.
For each $x \in X$, we let $R_x$ be the right action by $x$.
So $R_x dx$ is again a left invariant measure on $X$, and
there exists a positive number $\delta_X(x)$ such that
$R_x dx = \delta_X(x) dx$. The resulting multiplicative
group homomorphism
\[ \delta_X : X \lra \br^\times \]
is the {\em modular function of} $X$.
If $X$ is abelian, discrete, compact, nilpotent
or reductive, then it is {\em unimodular}, namely $\delta_X \equiv 1$.

Let $Y$ be a closed subgroup of $X$.  Define
\[ \de_Y^X : Y \lra \br^\times \quad \text{by} \quad
\de_Y^X(y) = \delta_Y(y)^{1/2} \delta_X(y)^{-1/2} .\]
If $\eta \in \widehat{Y}$ then the {\em (unitarily) induced
representation} $\pi = {\ind}_Y^X(\eta)$ is the natural left
translation action of $X$ on the space $\ch_\pi$ given by
\[
\{ f: X \ra \ch_\eta \;\mid\;
f(xy) = \de_Y^X(y) \pi_y (f(x)) \mbox{ for all } x \in X, y \in Y
\text{ and } ||f|| \in L^2(X/Y,\ch\}.
\]
Note that $\de_Y^X$ compensates any failure of left Haar measure on $X$ to
define an $X$--invariant Radon measure on $X/Y$.  Of course
\beq \label{indu}
\text{if
$\de_Y^X \equiv 1$, i.e. if $\delta_X|_Y = \delta_Y$, then
$\ch_\pi \cong L^2(X/Y) \widehat{\otimes} \ch_\eta$}\,.
\eeq

We now consider our setting, namely $G,\,K,\,M,\,A,\,N,\,T,\,P \text{ and }H$
as given in the Introduction.
Consider the direct product $G \times M$, with subgroups
\[ M_{diag} \subset NM_{diag} \subset G \times M. \]
Here $M_{diag}$ is the diagonal subgroup isomorphic to $M$, so
$NM_{diag}$ consists of $(nm,m)$ where $n \in N$ and $m \in M$.
Note that $NM_{diag}$ is a well defined subgroup of $G \times M$ because
$M$ normalizes $N$.
We write the elements of the quotient $(G \times M)/NM_{diag}$
as $[g,m]$. There is an action of $G \times M$ on $(G \times M)/NM_{diag}$,
where the right action of $M$ is given by
$R_{m_1}[g,m] = [g,m_1m]$ for all $m_1 \in M$ and
$[g,m] \in (G \times M)/NM_{diag}$.

Consider the exponential map $\exp: \fa \to A$.
As usual $\rho_\fa \in \fa^*$ denotes half the sum of positive $\fa$--roots
determined by $N$, so $2\rho_\fa(v) = \tr(\ad(v): \fn \to \fn)$ for
$v \in \fa$.  It defines the quasi--character
\[ e^{\rho_\fa} : A \to \br^\times \text{ by } e^{\rho_\fa}(\exp(v))=e^{\rho_\fa(v)}
\mbox{ for all } v \in \fa . \]

\begin{proposition} \label{ide}
We have a $(G \times M$)--equivariant diffeomorphism
\[ \phi : (G \times M)/NM_{diag} \lra G/N
\text{ defined by } \phi ([g,m]) = gm^{-1}N ,\]
and it equips $(G \times M)/NM_{diag}$ with a $(G \times M)$--invariant measure.
\end{proposition}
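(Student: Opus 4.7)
The plan is straightforward: verify that $\phi$ is well-defined and $(G\times M)$-equivariant, check bijectivity (hence a diffeomorphism), and construct the invariant measure by transporting the $G$-invariant measure already present on $G/N$.

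First I would check well-definedness. Replacing $(g,m)$ by $(g,m)(nm_0,m_0) = (gnm_0, mm_0)$ for an arbitrary $(nm_0,m_0)\in NM_{diag}$ gives
\[
\phi([gnm_0,mm_0]) \;=\; gnm_0(mm_0)^{-1}N \;=\; gnm^{-1}N \;=\; gm^{-1}N,
\]
the last equality because $M$ normalizes $N$. Equivariance is an identical computation: the right $M$-action on $G/N$ is $R_{m_1}(gN) = gm_1N$, well-defined by the same normalizing property, and both $\phi\bigl((g_1,m_1)\cdot[g,m]\bigr)$ and $(g_1,m_1)\cdot\phi([g,m])$ evaluate to $g_1\,g\,m^{-1}\,m_1^{-1}\,N$.

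Next I would establish bijectivity. Surjectivity is immediate from $\phi([g,1]) = gN$. For injectivity, suppose $gm^{-1}N = g'(m')^{-1}N$, so $g' = gm^{-1}nm'$ for some $n\in N$; setting $m_0 = m^{-1}m'\in M$ and $n_0 = m^{-1}nm\in N$ (using normalization once more), a short computation gives $(g',m') = (g,m)(n_0 m_0, m_0)$, hence $[g,m] = [g',m']$. Since $\phi$ is an equivariant bijection between smooth $(G\times M)$-homogeneous spaces, it is automatically a diffeomorphism.

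For the invariant measure I would transport the $G$-invariant measure $d(gN)$ on $G/N$ (already recalled in the Introduction) back through $\phi$. It suffices to check that the right $M$-action $gN\mapsto gm_1N$ on $G/N$ also preserves $d(gN)$; this is the standard unimodularity calculation, using that $G$ and $M$ are reductive hence unimodular, together with the Langlands-decomposition property $|\det\Ad(m)|_\fn|=1$ for $m\in M$. Pulling back the resulting $(G\times M)$-invariant measure via $\phi$ then supplies the desired measure on $(G\times M)/NM_{diag}$. I expect the only nontrivial point to be this last unimodularity step, which is equivalent to checking $\delta_{G\times M}|_{NM_{diag}} \equiv \delta_{NM_{diag}}$; but this is classical for Levi factors of real parabolic subgroups and requires only the standard structure of $P = MAN$.
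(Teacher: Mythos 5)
Your proposal is correct and follows essentially the same route as the paper: identify $(G\times M)/NM_{diag}$ with $G/N$ via the transitive action $\tau_{(g,m)}xN = gxm^{-1}N$ (you verify the stabilizer computation by hand rather than quoting orbit--stabilizer, but the content is identical), then transport the invariant measure. The only cosmetic difference is in the measure step: the paper exhibits the measure explicitly as $e^{2\rho_\fa}\,dk\,dm\,da$ from $G=KMAN$ and observes its right $M$-invariance, whereas you argue abstractly from unimodularity and $|\det\Ad(m)|_\fn|=1$ --- which is the same fact the paper records just afterwards as $\delta_P|_{MN}\equiv 1$.
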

\begin{proof}
Since $M$ normalizes $N$, we have a transitive action $\tau$ of
$G \times M$ on $G/N$, given by $\tau_{(g,m)} xN = gxm^{-1}N$.
We have $\tau_{(g,m)} eN = eN$ if and only if $(g,m) \in NM_{diag}$,
so the stabilizer of the identity coset $eN \in G/N$ is $NM_{diag}$.
This leads to the $(G \times M)$-equivariant diffeomorphism $\phi$
of this proposition.

We have $G = KMAN$.
Let $dg,\, dk,\, dm,\, da \text{ and }dn$ be their Haar measures.
Then $dg = e^{2\rho_\fa} \, dk \, dm \, da \, dn$
\cite[Proposition 8.44]{K2002}.
The $G$-invariant measure on $G/N$ is
$e^{2\rho_\fa} \, dk \, dm \, da$, and it
is invariant under the right action of $M$.
Therefore, $\phi$ induces a $(G \times M)$--invariant measure on
$(G \times M)/NM_{diag}$.
\end{proof}

Since $G$ and $M$ are unimodular, their direct product $G \times M$ is also
unimodular. The left invariant measure on $P = MAN$ is
\beq \label{tid}
e^{2\rho_\fa} \, dm \, da \, dn .
\eeq
The subgroup $NM_{diag}$ of $G \times M$ is isomorphic to the subgroup
$NM$ of $G$.
By (\ref{tid}), $NM$ has Haar measure $dn \, dm$, so it is unimodular.
 Hence $\de_{NM_{diag}}^{G \times M} \equiv 1$, and by (\ref{indu}),
\beq \label{bag}
{\ind}_{NM_{diag}}^{G \times M}(1) = L^2 ((G \times M)/NM_{diag}) .
\eeq

The modular function of $P$ is $\delta_P (man) = e^{2\rho_\fa}(a)$
\cite[VIII-4]{K2002}, so
its restriction to $MN$ is trivial. Therefore,
\beq \label{lil}
\de_{MN \times M}^{P \times M} \equiv 1 \text{ and }
\de_{NM_{diag}}^{MN \times M} \equiv 1 .
\eeq
We apply induction in stages and get
\beq
\begin{array}{rll}
	{\ind}_{NM_{diag}}^{G \times M}(1)
& = {\ind}_{P \times M}^{G \times M} {\ind}_{MN \times M}^{P \times M}
	{\ind}_{NM_{diag}}^{MN \times M}(1) & \mbox{by \cite[Thm.2.47]{KT2013}} \\
& = {\ind}_{P \times M}^{G \times M} {\ind}_{MN \times M}^{P \times M}
(L^2(M) \otimes 1)  & \mbox{by (\ref{indu}) and (\ref{lil})} \\
& = {\ind}_{P \times M}^{G \times M} (L^2(A,L^2(M)) \otimes 1).
 & \mbox{by (\ref{indu}) and (\ref{lil})}
\end{array}
\label{maa}
\eeq

By the Plancherel theorem (see for example \cite[Thm.7.9]{R1973}),
\[
L^2(A: L^2(M)) =
\int_{\sigma\in\fa^*} L^2(M) \otimes e^{i\sigma} \, d\sigma ,
\]
where $d \sigma$ is Lebesgue measure on $\fa^*$. Therefore,
(\ref{maa}) becomes
\beq
{\ind}_{NM_{diag}}^{G \times M}(1) =
\int_{\sigma\in\fa^*} {\ind}_{P \times M}^{G \times M}
(L^2(M) \otimes e^{i\sigma} \otimes 1) d\sigma .
\label{mcc}
\eeq

\begin{proposition} \label{sat}
As the Hilbert spaces for unitary representations of $G \times M$,
\[
L^2(G/N) = \int_{\sigma\in\fa^*}
{\ind}_{P \times M}^{G\times M} (L^2(M)\otimes e^{i\sigma} \otimes 1)\, d\sigma .
\]
\end{proposition}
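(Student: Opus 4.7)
The proof I envision is essentially a concatenation of the identifications already set up in the paragraphs preceding the statement: Proposition \ref{ide}, equation (\ref{bag}), and the induction-in-stages calculation culminating in (\ref{mcc}). No new idea is needed; the statement is a repackaging.

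First, I would invoke Proposition \ref{ide}. Since $\phi : (G \times M)/NM_{diag} \to G/N$ is a $(G \times M)$-equivariant diffeomorphism and the proposition supplies matching invariant measures (the $(G\times M)$-invariant measure on the left quotient is transported from the $(G \times M)$-invariant measure on $G/N$), pullback by $\phi$ yields a unitary $(G \times M)$-isomorphism
\[
L^2(G/N) \;\cong\; L^2\bigl((G \times M)/NM_{diag}\bigr).
\]

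Next, equation (\ref{bag}) identifies the right-hand side with ${\ind}_{NM_{diag}}^{G \times M}(1)$, using the observation that $NM_{diag}$ is unimodular (so $\de_{NM_{diag}}^{G \times M} \equiv 1$) and hence the induced representation realizes as the $L^2$-space of the quotient via (\ref{indu}).

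Finally, the computation (\ref{maa})--(\ref{mcc}) already expresses ${\ind}_{NM_{diag}}^{G \times M}(1)$ as
\[
\int_{\sigma \in \fa^*} {\ind}_{P \times M}^{G \times M}\bigl(L^2(M) \otimes e^{i\sigma} \otimes 1\bigr)\, d\sigma,
\]
through induction in stages along $NM_{diag} \subset NM \times M \subset P \times M \subset G \times M$, the triviality of the relevant modular functions from (\ref{lil}), and the Plancherel decomposition of $L^2(A;L^2(M))$. Stringing these three identifications together gives the stated decomposition. The only point that deserves a sentence of comment is the equivariance of the identifications with respect to the full $G \times M$ action—both the left $G$-factor and the right $M$-factor—but this is immediate from the construction of $\phi$ and from the fact that every intermediate step (induction in stages, the Plancherel integral) preserves the full $(G \times M)$-module structure. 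There is no genuine obstacle; the work was all done in the preparatory material.
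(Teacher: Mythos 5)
Your proposal is correct and follows exactly the paper's own argument: the paper's proof of Proposition \ref{sat} is precisely the concatenation of Proposition \ref{ide}, equation (\ref{bag}), and the induction-in-stages computation (\ref{mcc}). Nothing further is needed.
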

\begin{proof}
This follows from
Proposition \ref{ide}, (\ref{bag}) and (\ref{mcc}).
\end{proof}

\section{Partial Dirac cohomology of $L^2(G/N)$}\label{sec3}
\setcounter{equation}{0}

In this section we calculate the partial Dirac cohomology for the
representation of $G \times M$ on $L^2(G/N)$ with
respect to 
$D_M = D_{(\fmc,\fmc\cap\ftc)} + iD_{diag;(\fmc\cap\fkc,\ftc)}$,
the modified Dirac operator
of (\ref{mdirac}).  We then apply it to prove Theorem \ref{thm1}.

If $N=1$ and $M=G$, it reduces to
the calculation of Dirac cohomology of $L^2(G)$
with respect to the Dirac operator $\widetilde{D}_{(\fgc,\ftc)}$\,.
That was done in \cite{CH2016}.
Therefore, we use the phrase ``partial Dirac cohomology'' for
the case when $N\neq 1$.
The essential part is the calculation of the
Dirac cohomology of discrete series representations of $M$.

Now we recall the relevant results of Dirac cohomology of discrete series
representations.  Let $M$ be an acceptable real reductive Lie group
of Harish--Chandra class, for example the group $M$ in the Iwasawa
decomposition $P = MAN$ of a cuspidal parabolic subgroup of our group $G$.
We described the discrete series $\wh{M}_{disc}$ of $M$ in the
discussion leading up to (\ref{def-ds-M}).  It consists of the representations
$\eta_{\chi,\lambda} = {\ind}_{M^\dagger}^M(\eta_{\chi,\lambda}^\dagger)$
specified there.  The parameterization is that $T^0$ is a compact Cartan
subgroup of $M^0$\,, $\lambda$ belongs to the $M^0$--regular subset of
the lattice $\Lambda = \{\nu \in \ft^* \mid e^{i\nu} \in \widehat{T^0}\}$,
and $\chi \in Z_M(M^0)$ agrees with $e^{\lambda +\rho_\fm}$ on $Z_{M^0}$\,.
Harish--Chandra's construction and characterization of the discrete
series (for acceptable groups of Harish--Chandra class) is

\begin{theorem} \label{thm.4.1}
The discrete series $\widehat{M}_{disc}$ consists of the equivalence classes
of representations $\eta_{\chi,\lambda}$ where $\lambda$ runs over the
set of $M$--regular elements in the lattice $\Lambda \subset \ft^*$
and $\chi \in \wh{Z_M(M^0)}$ agrees with $e^{\lambda + \rho_\fm}$ on the
center of $M^0$\,.  Discrete series representations
$\eta_{\chi,\lambda} \simeq \eta_{\chi',\lambda'}$ if and only if
$(\chi,\lambda)$ and $(\chi',\lambda')$ are in the same orbit of the
Weyl group $W_K$\,.
\end{theorem}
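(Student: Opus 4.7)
The plan is to reduce to Harish-Chandra's classical theorem on the identity component $M^0$ and then handle two layers of disconnectedness: the extension from $M^0$ to $M^\dagger = Z_M(M^0) M^0$, which absorbs the abelian centralizer $Z_M(M^0)$, and the further extension from $M^\dagger$ to $M$, controlled by the finite component group $M/M^\dagger$.

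First, for the identity component I would invoke Harish-Chandra's classical construction \cite{HC1966}, in the form recorded for groups of Harish-Chandra class in \cite[Thm.~3.4.7]{W1974a}. Acceptability of $M$ makes $e^{\rho_\fm}$ a well-defined unitary character of $T^0$, so to each $M^0$-regular $\lambda$ in the weight lattice $\Lambda \subset \ft^*$ one obtains the discrete series $\eta^0_\lambda \in \widehat{M^0}_{disc}$ with distribution character (\ref{eta0}); two such representations coincide up to equivalence iff the parameters lie in the same $W_{M^0}$-orbit, and every class in $\widehat{M^0}_{disc}$ arises this way.

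Second, because $Z_M(M^0) \cap M^0 = Z_{M^0}$, the group $M^\dagger$ is a central product, and any irreducible unitary representation of $M^\dagger$ factors as an outer tensor product $\chi \otimes \eta^0_\lambda$ with $\chi|_{Z_{M^0}} = \eta^0_\lambda|_{Z_{M^0}}$; acceptability pins down this compatibility as $\chi|_{Z_{M^0}} = e^{\lambda+\rho_\fm}|_{Z_{M^0}}$, and square-integrability modulo center is inherited from $\eta^0_\lambda$, so one obtains all of $\widehat{M^\dagger}_{disc}$. To pass from $M^\dagger$ to $M$, I would apply Mackey's theory for the finite extension $1 \to M^\dagger \to M \to M/M^\dagger \to 1$: the induced representation $\eta_{\chi,\lambda} = \ind_{M^\dagger}^M(\eta^\dagger_{\chi,\lambda})$ is a finite sum of irreducibles, and for $M$-regular $\lambda$ the inertia subgroup of $[\eta^\dagger_{\chi,\lambda}]$ in $M$ equals $M^\dagger$, so $\eta_{\chi,\lambda}$ is irreducible. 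Square-integrability transfers through induction by finiteness of $|M/M^\dagger|$, so $\eta_{\chi,\lambda} \in \widehat{M}_{disc}$. Conversely, any $\pi \in \widehat{M}_{disc}$ restricts to $M^\dagger$ as a finite direct sum of $M$-translates of some $\eta^\dagger_{\chi,\lambda}$, and Frobenius reciprocity identifies $\pi \simeq \eta_{\chi,\lambda}$, giving exhaustion.

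Finally, the equivalence $\eta_{\chi,\lambda} \simeq \eta_{\chi',\lambda'}$ amounts to the $M$-orbit equivalence on parameter pairs induced by conjugation; since every conjugation of the compact Cartan $T$ by an element of $M$ is realized inside $N_{K \cap M}(T)$, this $M$-orbit equivalence coincides with a $W_K := N_{K \cap M}(T)/Z_{K \cap M}(T)$-orbit equivalence, as claimed. I expect the principal obstacle to be the Mackey inertia step: showing that for $M$-regular $\lambda$ no nontrivial coset in $M/M^\dagger$ stabilizes $[\eta^\dagger_{\chi,\lambda}]$, together with the corresponding exhaustion. Both come down to tracking how $M/M^\dagger$ acts on the parameter set $\widehat{Z_M(M^0)} \times \Lambda$ through its action on Cartan data, and ultimately rely on the strength of the regularity condition built into the definition of $M$-regular.
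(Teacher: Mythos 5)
The paper does not actually prove Theorem \ref{thm.4.1}: it is presented as a recollection of Harish--Chandra's classification of the (relative) discrete series for acceptable groups of Harish--Chandra class, with the supporting references \cite{HC1966} and \cite[Theorem 3.4.7]{W1974a} given in the introduction. Your proposal reconstructs the standard argument of those sources, and it is correct in outline: Harish--Chandra for $M^0$, the central-product factorization $\wh{M^\dagger}_{disc}=\{\chi\otimes\eta^0_\lambda\}$ (which works because $Z_M(M^0)$ genuinely centralizes $M^0$, so no projective cocycle intervenes), and Mackey theory for the finite normal extension $M^\dagger\lhd M$. Two points deserve to be made explicit. First, the inertia computation that you correctly identify as the crux uses the Harish--Chandra class hypotheses twice: if $m\in M$ stabilizes $[\eta^\dagger_{\chi,\lambda}]$ then it stabilizes $[\eta^0_\lambda]$, and since $\Ad(m)$ is inner on $\fmc$ one may modify $m$ by an element of $N_{M^0}(T^0)$ so that $\Ad(m)$ acts on $\ftc$ by an element of $W(\fmc,\ftc)$ fixing $\lambda$; regularity of $\lambda$ (ordinary regularity, $\langle\lambda,\alpha\rangle\neq 0$ for all $\alpha\in\Sigma_{\fmc,\ftc}$, is enough) forces that element to be trivial, whence $m\in Z_M(T^0)=T=Z_M(M^0)T^0\subset M^\dagger$ --- the last identity being exactly the statement ``$T=Z_M(M^0)T^0$'' that the paper records for Harish--Chandra class groups. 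Second, the ``$W_K$'' in the statement should be read as $W_{K\cap M}=W(M,T)=N_M(T)/Z_M(T)$, as you do; the ``only if'' direction of the equivalence criterion then follows from the restriction formula $\eta_{\chi,\lambda}|_{M^\dagger}=\sum_{xM^\dagger}\eta^\dagger_{\chi\cdot\Ad(x),\lambda\cdot\Ad(x)}$ together with the $M^\dagger$-level classification, matching the paper's remark that $\eta_{\chi,\lambda}\simeq\eta_{\chi',\lambda'}$ exactly when their $M^\dagger$-restrictions are equivalent. (The apparent discrepancy between $e^{\lambda+\rho_\fm}$ here and $e^{\lambda-\rho_\fm}$ in the introduction is harmless, since $e^{2\rho_\fm}$ is trivial on $Z_{M^0}$.)
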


We denote by $V_{\chi,\lambda}$ the Harish--Chandra module of
the $(K\cap M)$--finite vectors in $\caH_{\eta_{\chi,\lambda}}$.
The elements of $V_{\chi,\lambda}$ are $C^\infty$ vectors (in
fact real analytic vectors), and $V_{\chi,\lambda}$ is dense in
$\caH_{\eta_{\chi,\lambda}}$.  Thus the modified Dirac operator
$D=\Dt_{(\fgc,\ftc)}$ is a densely defined symmetric operator
$$
D\colon \caH_{\eta_{\chi,\lambda}} \otimes S \rightarrow
	\caH_{\eta_{\chi,\lambda}} \otimes S.
$$
We recall a standard fact in functional analysis that
a densely defined symmetric operator is closable and its closure is also symmetric.
If $A^*$ denotes the adjoint of a densely defined symmetric operator
$A$, then the closure $c\ell(A) = (A^*)^*$ and
$c\ell(A)$ is also symmetric \cite[Lemma 20.1]{MV1997}.
Thus, $D$ is closable and its closure $c\ell(D)$  is also symmetric.
It follows that ${\rm Ker}\, c\ell(D)$ is a closed subspace of the
Hilbert space $\caH_{\eta_{\chi,\lambda}} \otimes S$.  We define the
Dirac cohomology $H_D(\caH_{\eta_{\chi,\lambda}})$ of an irreducible
unitary representation $\eta_{\chi,\lambda}$
to be ${\rm Ker\,} c\ell(D)$.  The following proposition shows that
the Dirac cohomology of an irreducible representation is
equal to the Dirac cohomology of its Harish-Chandra module.
It was proved in \cite[Prop. 3.2]{CH2016} for
the case of connected semisimple Lie groups of finite center.

\begin{proposition}\label{dbar}
The kernel of $c\ell(D) \colon \caH_{\eta_{\chi,\lambda}} \otimes S
\rightarrow \caH_{\eta_{\chi,\lambda}} \otimes S$ coincides with the
kernel of $D \colon V_{\chi,\lambda} \otimes S\rightarrow
V_{\chi,\lambda} \otimes S$.  Thus
$$
\mbox{\rm Ker}\, c\ell(D) =
	H_D(V_{\chi,\lambda})
      = {\sum}_{w\in W_{K\cap M}}
              \ch_{\chi\cdot w^{-1}} \otimes  \bbC_{w\lambda}.
$$
\end{proposition}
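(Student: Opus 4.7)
The proposition contains two assertions: the analytic statement that $\ker c\ell(D)$ coincides with the algebraic Dirac cohomology of the Harish--Chandra module $V_{\chi,\lambda}$, and the algebraic computation identifying this cohomology with $\sum_{w\in W_{K\cap M}} \ch_{\chi\cdot w^{-1}} \otimes \bbC_{w\lambda}$. I would dispose of them separately, extending the argument of \cite[Prop.~3.2]{CH2016} for the analytic part and reducing to the connected semisimple case of \cite{HP2002} for the algebraic part.

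For the analytic part, first observe that $D = D_M$ commutes with the diagonal action of the spin cover of $K\cap M$ on $\caH_{\eta_{\chi,\lambda}} \otimes S$ and with the diagonal action of $T^0$; the latter commutation is precisely the purpose of the modification term $iD_{diag;(\fmc\cap\fkc,\ftc)}$. Hence both $D$ and $c\ell(D)$ preserve the joint $(K\cap M)$- and $T^0$-isotypic decomposition of $\caH_{\eta_{\chi,\lambda}}\otimes S$, and by admissibility of $\eta_{\chi,\lambda}$ (together with finite dimensionality of $S$) each joint isotypic component is finite-dimensional. Next, apply the Parthasarathy-type identity of \cite[\S 2]{CH2016}: it expresses $D_M^2$ in terms of the Casimir operators of $\fmc$, $\fmc\cap\fkc$, and $\ftc$ plus a constant, so $D_M^2$ acts by a scalar $c_\sigma \ge 0$ on each joint isotypic component. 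Symmetry of $D$ then gives $c\ell(D)v = 0$ iff $v$ lies in the closed Hilbert sum of those joint components with $c_\sigma = 0$, all of them finite-dimensional and contained in the Harish--Chandra module. This yields $\ker c\ell(D) = \ker D|_{V_{\chi,\lambda}\otimes S} = H_D(V_{\chi,\lambda})$.

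For the algebraic identification, proceed in two stages. On $M^\dagger = Z_M(M^0)\cdot M^0$, the Dirac operator equals $1\otimes D_{M^0}$ because $Z_M(M^0)$ centralizes $\fmc$ and acts trivially on $S$; hence $H_D(\ch_\chi \otimes V^0_\lambda) = \ch_\chi \otimes H_D(V^0_\lambda)$, and by \cite{HP2002} the latter equals $\sum_{w \in W_{K\cap M^0}} \bbC_{w\lambda}$. Since $V_{\chi,\lambda} = \ind_{M^\dagger}^M(\ch_\chi \otimes V^0_\lambda)$, the finite quotient $M/M^\dagger$ acts on the data by $\chi \mapsto \chi\cdot w^{-1}$ and $\lambda \mapsto w\lambda$ through coset representatives identifying $M/M^\dagger$ with $W_{K\cap M}/W_{K\cap M^0}$; induction then assembles the full sum $\sum_{w\in W_{K\cap M}} \ch_{\chi\cdot w^{-1}}\otimes\bbC_{w\lambda}$.

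\textbf{Main obstacle.} The principal difficulty is the analytic step: making rigorous at the Hilbert-space level the claim that the Parthasarathy decomposition exhibits $D_M^2$ as a non-negative scalar operator on each isotypic component, and in particular that closability combined with the $\sigma$-block diagonal structure passes the kernel computation from each finite-dimensional isotypic block to the global closure. Once this is in place, admissibility reduces the calculation to finite-dimensional linear algebra where symmetry of $D$ upgrades $\ker D^2$ to $\ker D$, and the algebraic $M/M^\dagger$-bookkeeping of the second stage becomes straightforward.
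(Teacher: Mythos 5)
Your proposal is correct and follows essentially the same route as the paper: the paper simply cites \cite[Prop.\ 3.2]{CH2016} for the analytic core (the isotypic--decomposition/Parthasarathy argument you spell out) and for the connected case, then passes to $M^\dagger$ (noting conjugation by $Z_M(M^0)$ does not change $D$) and finally to general $M$ by Mackey restriction of the induced module over $M/M^\dagger$, exactly as you do. The only quibble is attribution: the formula $H_D(V^0_\lambda)=\sum_{w\in W_{K\cap M^0}}\bbC_{w\lambda}$ for the modified operator $D_{(\fmc,\fmc\cap\fkc)}+iD_{diag;(\fmc\cap\fkc,\ftc)}$ is the content of \cite{CH2016} rather than of \cite{HP2002} directly, the latter treating $D_{(\fmc,\fmc\cap\fkc)}$ alone and producing $\widetilde{K\cap M}$--types rather than $T$--characters.
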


\begin{proof} If $M$ is connected and semisimple the assertion is
\cite[Prop. 3.2]{CH2016}.  But the argument of \cite[Prop. 3.2]{CH2016}
goes through without change, and without requiring semisimplicity because
$\eta_\lambda^0$ restricts to the center $Z_{M^0}$ as a multiple of
some fixed unitary character $\zeta$.  Thus the assertion holds for
connected $M$.

Consider the case $M = Z_M(M^0)M^0$.  The argument of \cite[Prop. 3.2]{CH2016}
still shows that ${\rm Ker\,} c\ell(D) = {\rm Ker\,} D$, and conjugation by
elements of $Z_M(M^0)$ makes no change in $D$, so $H_D(V_{\chi,\lambda})
= {\sum}_{w\in W_{K\cap M^0}} \ch_{\chi\cdot w^{-1}} \otimes \bbC_{w\lambda}$.
That gives us the assertion for $M = M^\dagger$.

Finally consider the general case.  There $\eta_{\chi,\lambda}|_{M^\dagger}
= \sum_{xM^\dagger \in M/M^\dagger}
\eta^\dagger_{\chi\cdot \Ad(x),\lambda \cdot \Ad(x)}$ because
$\eta_{\chi,\lambda}$ is induced from the normal subgroup $M^\dagger$.
We use the result for $M^\dagger$ to write this as
$\eta_{\chi,\lambda}|_{M^\dagger} =
\sum_{w\in W_{K\cap M}} \ch_{\chi\cdot w^{-1}} \otimes  \bbC_{w\lambda}$.
The assertion follows.
\end{proof}

Recall that the Dirac operator $D$ is in $U(\fmc)\otimes C(\fsc)$.
We first consider
$$
D \colon C^\infty(M)\otimes S \rightarrow
C^\infty(M)\otimes S
$$
Then $D$ induces a densely defined symmetric
operator on the Hilbert space $L^2(M)\otimes S$, and the closure of
$D$ defines a closed symmetric operator
\[
\bbD \colon L^2(M)\otimes S \rightarrow
L^2(M)\otimes S.
\]
Then ${\rm Ker}\, \bbD$ is a closed subspace in
$L^2(M)\otimes S$.
We define the Dirac cohomology $H_D(L^2(M))$ of $L^2(M)$ to be
${\rm Ker}\, {\bbD}$.
It follows from  the fact that $D$ is $T$-invariant that
${\rm Ker}\, {\bbD}$ is
a $(M\times T)$--module.  The following theorem was proved for the case
of connected $M$ as \cite[Theorem 3.3]{CH2016}.  Since $M$ is of
Harish--Chandra class, the argument there goes through to prove
${\rm Ker}\, {\bbD}=\sum_{\eta_{\chi,\lambda}\in\widehat{M}_{disc}}
\mathcal{H}_{\eta_{\chi,\lambda}} \otimes H_D(V_{\chi,\lambda}^*)$.
Using Proposition \ref{dbar}, now,
\begin{theorem} \label{key}
We have the following orthogonal sum decomposition as representation
space of $(M\times T)$:
$$
{\rm Ker}\, {\bbD}= {\sum}_{\eta_{\chi,\lambda}\in\widehat{M}_{disc}}
\mathcal{H}_{\eta_{\chi,\lambda}} \otimes
	{\sum}_{w \in W_{K\cap M}}(\mathcal H_{\chi^*\cdot w^{-1}}\otimes
	\bc_{-w\cdot\lambda}).
$$
\end{theorem}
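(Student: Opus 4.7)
The plan is to combine Harish--Chandra's Plancherel decomposition of $L^2(M)$ with the per-representation calculation of Proposition~\ref{dbar}. Writing $L^2(M)=\int_{\widehat M}\mathcal H_\pi\,\wh{\otimes}\,\mathcal H_{\pi^*}\,d\mu(\pi)$ as an $M\times M$-module under Plancherel measure $\mu$, the first observation is that $D\in U(\fmc)\otimes C(\fsc)$ acts through right-invariant differential operators on $M$, so $\bbD$ commutes with the left $M$-action and is diagonalised by this direct integral. Consequently $\ker\bbD$ decomposes as a direct integral of $\mathcal H_\pi\otimes\ker(c\ell(D)|_{\mathcal H_{\pi^*}\otimes S})$ over $\widehat M$, and the task is reduced to a fibrewise kernel computation.

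The next step is to eliminate the contribution of the continuous part of $\widehat M$. Following \cite{CH2016}, I would invoke a Parthasarathy-type identity expressing the square of the modified Dirac operator, after the $\ftc$-diagonal shift, as a Casimir-like central element plus a constant. Combined with Harish--Chandra's infinitesimal-character formulas for standard tempered representations, this yields a uniform positive lower bound for $D^2$ on every non-discrete tempered summand. Hence those parts of the Plancherel measure contribute nothing to $\ker\bbD$, leaving only a discrete orthogonal sum
\[
\ker\bbD \;=\; {\sum}_{\eta_{\chi,\lambda}\in\widehat M_{disc}} \mathcal H_{\eta_{\chi,\lambda}}\otimes\ker\bigl(c\ell(D)|_{\mathcal H_{\eta_{\chi,\lambda}^*}\otimes S}\bigr).
\]
Each inner factor agrees with the Dirac cohomology of the underlying Harish--Chandra module by Proposition~\ref{dbar}, and inserting the explicit value $H_D(V_{\chi,\lambda}^*)=\sum_{w\in W_{K\cap M}}\mathcal H_{\chi^*\cdot w^{-1}}\otimes\bc_{-w\lambda}$ produces the formula stated in the theorem.

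The main obstacle is checking that the argument of \cite[Theorem~3.3]{CH2016}, stated there for connected semisimple groups with finite centre, extends across the entire Harish--Chandra class. Two new features appear: the centre of $M^0$ and the finite component group $M/M^\dagger$. The first is harmless because $\bbD$ commutes with the central action and the Plancherel measure of $\widehat{M^0}$ fibres over $\widehat{Z_{M^0}}$, so the spectral-gap argument can be applied one central character at a time. The second is handled by induction in stages via $\eta_{\chi,\lambda}=\ind_{M^\dagger}^M(\chi\otimes\eta^0_\lambda)$ together with the fact that $\Ad(Z_M(M^0))$ acts trivially on $D\in U(\fmc)\otimes C(\fsc)$, exactly as exploited already in the proof of Proposition~\ref{dbar}. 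With these two points in hand the spectral-gap step and the reduction to module-level Dirac cohomology carry over verbatim, and the theorem follows.
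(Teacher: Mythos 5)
Your proposal tracks the paper's approach closely. The paper's own proof of Theorem \ref{key} is essentially a citation: it asserts that the argument of \cite[Theorem 3.3]{CH2016} extends to the Harish--Chandra class, giving $\ker\bbD = \sum_{\eta_{\chi,\lambda}} \mathcal H_{\eta_{\chi,\lambda}}\otimes H_D(V_{\chi,\lambda}^*)$, and then substitutes Proposition \ref{dbar}. You flesh out what the \cite{CH2016} argument is (Plancherel decomposition, fibrewise kernel, elimination of the continuous spectrum, reduction to module-level Dirac cohomology) and identify the two new features of the Harish--Chandra class (noncompact $Z_{M^0}$ and $M/M^\dagger$) with essentially the same resolution used in the paper's Proposition \ref{dbar}. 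So the approach matches.

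One step is stated too strongly and, as written, is false. You claim ``a uniform positive lower bound for $D^2$ on every non-discrete tempered summand.'' There is no uniform bound, and more importantly there is not even a pointwise positive bound on every non-discrete tempered summand: limits of discrete series (continuous parameter $\sigma=0$ in a non-minimal $H_M$-series) are non-discrete tempered representations of $M$ with real infinitesimal character, and these can have nonzero Dirac cohomology, i.e.\ $D^2$ has a zero eigenvalue there. What actually saves the argument (and what the paper's own Lemma \ref{dsDM} says, in the analogous situation for $\bbD_M$ on $L^2(G/N)$) is a measure-theoretic statement: nonzero Dirac cohomology forces a real infinitesimal character, and the set of non-discrete tempered representations with real infinitesimal character has Plancherel measure zero. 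Replace your ``uniform positive lower bound on every non-discrete summand'' with ``the set of tempered $\eta\notin\widehat M_{disc}$ for which $c\ell(D)$ has nontrivial kernel on $\mathcal H_{\eta^*}\otimes S$ has Plancherel measure zero,'' and the proof is correct. (Also a cosmetic slip: if $D$ is to commute with the left $M$-action and act on the $\mathcal H_{\pi^*}$ factor, it is built from \emph{left}-invariant vector fields, i.e.\ acts by the right regular representation, not ``right-invariant differential operators.'')
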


Now we calculate the partial Dirac cohomology of $L^2(G/N)$.
We are working with a Cartan involution $\theta$ of $G$,
$K = G^\theta$ is a maximal compact subgroup, $H = T \times A$
is a $\theta$--stable Cartan subgroup, $P = MAN$ is an associated
cuspidal parabolic subgroup so $M$ has compact Cartan subgroup
$T$ and $MA = Z_G(A)$, and $G = KMAN$.

Let $\fmc = \ftc \oplus \fsc$ where $\fsc$ is the sum of the $\ftc$--root
spaces in $\fmc$.  It is the orthogonal decomposition of $\fmc$
with respect to an invariant form of $\fg$.
Let $S$ be the spin module for the Clifford algebra $C(\fsc)$.
Then
$$
D_M = D_{\fmc, \fkc \cap \fmc} + i D_{\delta;\fkc \cap \fmc, \ftc}
$$
is the {\em modified Dirac operator} as defined in \cite[(1.1)]{CH2016}.
It induces a densely defined symmetric operator,
$$
 \bd_M : L^2(G/N) \otimes S \lra L^2(G/N) \otimes S .
$$

\noindent {\it Proof of Theorem \ref{thm1}:}

By Proposition \ref{sat}, the representation space of the regular
representation of $G\times M$ on $L^2(G/N)$ has direct integral decomposition
$$
L^2(G/N) = \int_{\sigma\in\fa^*} {\ind}_{P \times M}^{G \times M} (L^2(M)
\otimes e^{i\sigma} \otimes 1) \, d\sigma .
$$
This
decomposition splits into  a discrete spectrum and a continuous spectrum.
The discrete part corresponds to
the discrete spectrum of the Plancherel decomposition of $L^2(M)$
with summation over all discrete series of $M$,
$$
L^2(G/N)_{disc}=
  {\sum}_{\eta_{\chi,\lambda}\in\widehat M_{disc}}
  \left ( \int_{\sigma\in\fa^*} {\ind}_{P }^{G}
  \bigl ( (\caH_{\eta_{\chi,\lambda}}\otimes \caH_{\eta_{\chi,\lambda}}^*)
  \otimes e^{i\sigma} \otimes 1\bigr ) \, d\sigma \right )
  \deg(\eta_{\chi,\lambda})\,.
$$
Here $\deg(\eta_{\chi,\lambda})$ is the formal degree of
$\eta_{\chi,\lambda}$\,.
The continuous spectrum corresponds to a direct integral
of other tempered (i.e. $H$--series) representations of $M$.

\begin{lemma}\label{dsDM}
Only the discrete spectrum in $L^2(M)$ contributes to
${\rm Ker}\, \bbD_M $.
\end{lemma}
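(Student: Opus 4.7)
The plan is to reduce the computation of ${\rm Ker}(\bbD_M)$ on $L^2(G/N) \otimes S$ to the computation of ${\rm Ker}(\bbD)$ on $L^2(M) \otimes S$ already carried out in Theorem \ref{key}, by propagating that result through the Plancherel decomposition of Proposition \ref{sat}.

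First, I would tensor Proposition \ref{sat} with the spinor module $S$ to obtain
\[
L^2(G/N) \otimes S \cong \int_{\sigma\in\fa^*} {\ind}_{P \times M}^{G \times M}(L^2(M) \otimes e^{i\sigma} \otimes 1)\, d\sigma \otimes S
\]
as $(G \times M \times T)$-representations. Via the diffeomorphism $\phi\colon [g,m] \mapsto gm^{-1}N$ of Proposition \ref{ide}, the right $M$-action on $G/N$ corresponds precisely to the right regular representation on the $L^2(M)$ factor appearing in this direct integral.

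Second, the key observation is that $D_M \in U(\fmc) \otimes C(\fsc)$ is assembled from the infinitesimal right $M$-action together with the Clifford action on $S$. Consequently it commutes with left $G$-translation, with the central characters $e^{i\sigma}$ on $A$, and with the left $T$-action on the spinors. It follows that on each fiber of the direct integral $D_M$ acts as $1 \otimes D$, where $D$ is the Dirac operator on $L^2(M) \otimes S$ of Theorem \ref{key}. Passing to closures (which is legitimate because $D_M$ preserves the dense subspace of smooth vectors compatible with the induction-in-stages of Proposition \ref{sat}) yields
\[
{\rm Ker}(\bbD_M) \cong \int_{\sigma\in\fa^*} {\ind}_{P \times M}^{G \times M}({\rm Ker}(\bbD) \otimes e^{i\sigma} \otimes 1)\, d\sigma.
\]

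Third, invoking Theorem \ref{key}, ${\rm Ker}(\bbD)$ lies entirely inside the discrete spectrum $L^2(M)_{disc} \subset L^2(M)$. Substituting this back shows that ${\rm Ker}(\bbD_M)$ is a direct integral of parabolically induced representations whose inducing $M$-data come only from discrete series of $M$; no contribution survives from the continuous spectrum of $L^2(M)$, which is exactly the statement of the lemma.

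The main obstacle is the justification step connecting the unbounded symmetric operator $\bbD_M$ to the direct integral and induced structure. It requires verifying that taking the closure, direct integration over $\sigma \in \fa^*$, and parabolic induction from $P \times M$ all commute with passing to kernels. This is routine but delicate in terms of domains: one uses that $D_M$ restricts to a densely defined operator on each fiber via the space of smooth vectors, that its formal right-$M$-invariance forces it to act fiberwise, and that the kernel of a decomposable closed symmetric operator on a direct integral is the direct integral of the fiberwise kernels.
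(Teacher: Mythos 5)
Your argument is correct, but it takes a genuinely different route from the paper's own proof. You reduce Lemma \ref{dsDM} directly to Theorem \ref{key} (the statement that $\mathrm{Ker}\,\bbD$ on $L^2(M)\otimes S$ is supported on the discrete series of $M$), by observing that $D_M$ comes from the right $M$-action and therefore acts fiber-by-fiber through the direct integral of Proposition \ref{sat}, so that $\mathrm{Ker}\,\bbD_M$ is the direct integral of the parabolically induced copies of $\mathrm{Ker}\,\bbD$. This is a clean ``reduction to the $N=1$ case.'' The paper instead re-runs the underlying analysis at the level of $L^2(G/N)$: it writes $\mathrm{Ker}\,\bbD_M$ as an integral over $\eta\in\widehat M$ against Plancherel measure and then argues for each $\eta$ separately that $\overline{D_M}$ has nonzero kernel on $\ch_\eta^*\otimes S$ only if the associated Harish--Chandra module has nonzero Dirac cohomology (via Proposition \ref{dbar}), which forces $\eta$ to have real infinitesimal character, and then invokes Harish--Chandra's Plancherel theorem to conclude that among tempered $\eta$ only the discrete series survive. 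Your approach is shorter and more modular because it encapsulates all of this inside Theorem \ref{key}; the paper's version is more self-contained and makes visible exactly which representation-theoretic obstruction (real infinitesimal character plus Plancherel-null continuous spectrum) is doing the work. You are right to flag the technical point about taking kernels through closures, direct integrals, and parabolic induction; the paper's proof implicitly makes the same move when it writes $\mathrm{Ker}\,\bbD_M$ as a Plancherel integral of fiberwise kernels, so this is not a defect of your argument relative to the paper's.
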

\begin{proof}
It follows from the Plancherel decomposition that
$$
{\rm Ker}\, \bbD_M= \int_{\eta\in\widehat M}
(\int_{\sigma\in\fa^*}{\ind}_{P }^{G}
(\caH_\tau \otimes e^{i\sigma} \otimes 1)\, d\sigma)
\otimes {\rm Ker}\, \{\overline{D_M}\colon
\caH_\eta^*\otimes S\rightarrow \caH_\eta^*\otimes S\}d\mu(\eta)
$$
where $\mu$ is Plancherel measure on $\wh{M}$.
Therefore, the representation $\eta \otimes \eta^*$ of $M\times M$
contributes to ${\rm Ker}\, \bbD_M$ if and only if the Dirac operator
$\overline{D_M}$ acts on $\caH_\eta^*\otimes S$ with nonzero kernel.
This condition is
equivalent to the Harish-Chandra module of $\caH_\eta^*$ having
nonzero Dirac cohomology, as we showed in Proposition
\ref{dbar}. It follows that $\caH_\eta^*$ must have a real
infinitesimal character $\xi$, in the sense that if $\xi$ is in the
dominant chamber then $\theta(\xi)=\xi$. Therefore, only the discrete
series can contribute to ${\rm Ker}\,\bbD_M$, since other tempered
representations with real infinitesimal characters have Plancherel
measure 0 by Harish-Chandra's Plancherel Theorem.
\end{proof}

From Lemma \ref{dsDM} we obtain the orthogonal sum decomposition
$$
{\rm Ker}\,\bbD_M=\sum_{\eta_{\chi,\lambda}\in\widehat
M_{disc}}\left ( \int_{\sigma\in\fa^*} {\ind}_{P }^{G}
	(\caH_{\eta_{\chi,\lambda}}
	\otimes e^{i\sigma} \otimes 1)\, d\sigma\right )
	\deg(\eta_{\chi,\lambda}) \otimes H_{D}(\caH_{\eta_{\chi,\lambda}}^*).
$$
Then by substituting the Dirac cohomology $H_{D}(\caH_\lambda^*)$ of the
discrete series representation $\caH_{\eta_{\chi,\lambda}}^*$ of $M$,  we obtain
\begin{equation}\label{kerdm}
\begin{aligned}
{\rm Ker}\,\bbD_M=  \sum_{\eta_{\chi,\lambda}\in\widehat
M_{disc}}& \left ( \int_{\sigma\in\fa^*} {\ind}_{P }^{G}
        (\caH_{\eta_{\chi,\lambda}}
	\otimes e^{i\sigma} \otimes 1)\, d\sigma\right )
        \deg(\eta_{\chi,\lambda}) \\
& \otimes \bigl ( \sum_{w\in W_{K \cap M}}
	\caH_{\chi^* \cdot w^{-1}}\otimes \bbC_{-w\lambda}\bigr ).
\end{aligned}
\end{equation}
This completes the proof of Theorem \ref{thm1}.  $\hfill$$\Box$

We note that
$\bbD_M$ is $G\times T$-invariant, so (\ref{kerdm})
is an orthogonal decomposition of  $(G\times T)$--modules.

\section{$L^2$-functions}\label{sec4}
\setcounter{equation}{0}

In this section, we study certain $L^2$-function spaces.
The key result is Proposition \ref{keyy}, which will be used later.

Recall some notation.  We fix a Cartan involution $\theta$ of $G$ and  
the $(\pm 1)$--eigenspace decomposition $\fg = \fk + \fp$; $\fk$ is the
Lie algebra of the fixed point set $K = G^\theta$.  We fix a 
$\theta$--stable Cartan subgroup $H = T \times A$ where $T = H\cap K$
and $A = \exp(\fa)$,\, $\fa = \fh \cap \fp$.  
Then $H^0 = T^0\times \exp(\fa) \cong T^0 \times \fa$, 
and $i\ft \cong \exp(i\ft)$ (group isomorphisms), and
$\fh = \ft + \fa \cong i\fh = i\ft \times i\fa$ (real vector
space isomorphism),
leading to identification of
$\tT_\bc = T^0 \times \exp(i\ft)$ with $(\bc/\bz)^n$, $n = \dim_\br T$, 
and of $\fa_\bc$ with $\bc^m$, $m = \dim_\br A$.  

From that, there is a unique complex 
structure on $\hH \times \fhh$ such that the map
\beq
\label{dapat}
(t \exp v, x+y) \mapsto (t \exp ix , v + iy)
\text{ for all } t \in T^0, x \in \ft \text{ and } v,y \in \fa
\eeq
is a holomorphic diffeomorphism of $\hH \times \fhh$ onto
$\tT_\bc \times \fa_\bc \cong (\bc/\bz)^n \times \bc^m$\,.  This uses
$t \exp v \in H^0$, $x+y \in \fh$, $t \exp ix \in T_\bc^0$ 
and $v+iy \in \fa_\bc$\,.

Let $\caH_{(\hH \times \fh)}$ denote the resulting space of holomorphic 
functions
on $\hH \times \fh$.  The space $\widehat{H^0}$ of unitary characters on 
$H^0$ consists of the $e^\mu$, $\mu \in i\fh^*$, that are well defined on
$H^0$.  Any such $e^\mu$ extends uniquely to a holomorphic homomorphism
\[ 
e^\mu_\bc: \hH \times \fh \to \bc^\times,
\] 
and $e^\mu_\bc \in \caH_{(\hH \times \fh)}$.  We write
$\bc_\mu$  for the $1$--dimensional space spanned by 
$e^\mu_\bc$.

Fix a strictly convex function
\[ F : \fhh \lra \br .\]
Namely $F$ is a smooth function
such that under any linear coordinates $(x_i)$ on $\fhh$,
the Hessian matrix $(\frac{\pa^2 F}{\pa x_i \pa x_j})$
is positive definite.
We also identify it with an $\hH$-invariant function on $\hH \times \fh$,
and a $G$-invariant function on $G/N \times \fh$.
For functions on $\fh$, $\hH \times \fh$ and $G/N \times \fh$,
the $L^2$-norm $\|\cdot\|^2$ refers to
square-integration against $e^{-F}$ times invariant measure.
For instance we define the weighted Bergman space
\[ 
\caHH_{(\hH \times \fhh, e^{-F})} =
\left \{f \in \caH_{(\hH \times \fh)} \left |
\int_{\hH \times \fhh} |f(h,x)|^2 e^{-F(x)} dh\, dx <\infty \right . \right \} ,
\]
where $dx$ is the Lebesgue measure on $\fh$.
The holomorphic functions form a closed subspace in the $L^2$-space,
so $\caHH_{(\hH \times \fhh, e^{-F})}$ is a Hilbert space.

Let $F' : \fhh \lra \fhh^*$
be the gradient mapping of $F$.
The image $\im(\hi F') \subset i\fhh^*$.
(The factor $i$ is added so that the image lies in $i \fhh^*$.)
Since $F$ is strictly convex, $\im(\hi F')$ is a convex open set.
Let $d \mu$ be the product of counting measure on
$\{\lambda \in i\ft^* \mid e^\lambda \in \widehat{T^0}\}$
and Lebesgue measure on $i \fa^*$.  It is a normalization
of Haar measure on $\widehat{H^0}$.

\begin{theorem} {\rm \cite[Thm.1.2]{C2007}}
We have an isomorphism of unitary $\hH$-representations
\label{cied}
$$
\caHH_{(\hH \times \fhh, e^{-F})} =
\int_{\tiny \begin{array}{l} \mu \in \im(\hi F') \\
e^\mu \in \wh{\hH} \end{array}} \bc_\mu \, d \mu .
$$
\end{theorem}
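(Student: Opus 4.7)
The plan is to reduce the claim, via the holomorphic identification (\ref{dapat}), to a weighted Bergman space on the complex manifold $T^0_\bc \times \fa_\bc$, where the weight $e^{-F}$ depends only on the imaginary part. The product structure $\hH = T^0 \times A$ then lets me handle the compact-torus factor and the vector-group factor separately, producing the discrete (counting) part and the continuous (Lebesgue) part of $d\mu$ respectively.

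First, I would use (\ref{dapat}) to realize $\caHH_{(\hH \times \fhh, e^{-F})}$ as the space of holomorphic $L^2$ functions on $T^0_\bc \times \fa_\bc \cong (\bc/\bz)^n \times \bc^m$ with weight $e^{-F}$ depending only on the imaginary coordinates. Since $F$ is $\hH$-invariant in this realization, the left $\hH$-action on $\caHH_{(\hH \times \fhh, e^{-F})}$ is unitary, so decomposing along $\wh{\hH}$ is meaningful and each isotypic piece must be a sum of one-dimensional character spaces $\bc_\mu$.

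Second, on the torus factor $T^0_\bc$ every holomorphic function admits a Laurent expansion indexed by the weight lattice $\{\lambda \in i\ft^* \mid e^\lambda \in \wh{T^0}\}$. By Parseval, the $L^2(e^{-F})$ norm becomes a sum, indexed by $\lambda$, of weighted $L^2$ norms on $\fa_\bc$, with a new weight obtained from $F$ by shifting via the linear term coming from $\lambda$ and hence still strictly convex. This step produces the counting measure on the $T^0$-side. Then for each $\lambda$ I would apply the standard Bargmann-Fock / Paley-Wiener result for weighted holomorphic $L^2$ spaces on $\bc^m$: given a strictly convex weight $\widetilde F$ on $\fa$, the space decomposes as $\int_{\sigma\in\im(\hi\widetilde F')} \bc_\sigma \, d\sigma$, because the square-integrability of the character $e^{i\sigma\cdot w}$ against $e^{-\widetilde F(\im w)}$ is, by completing the square, controlled exactly by whether $\sigma$ lies in the image of the gradient map.

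The main anticipated obstacle is the bookkeeping that matches gradient images: one must verify that as $\lambda$ ranges over the $T^0$-character lattice, the shifted vector-space images assemble (up to a null set) to precisely $\{\mu \in \im(\hi F') \mid e^\mu \in \wh{\hH}\}$, and that the combined discrete sum and direct integral yields the product measure $d\mu$ asserted in the statement. Once this is established, the Hilbert-space isomorphism follows from Parseval in the torus direction combined with the Bargmann-Fock isometry in the vector direction. Since this is the content of \cite[Thm.1.2]{C2007}, the complete verification is given in that reference.
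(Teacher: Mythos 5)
Since the paper states this theorem only as a citation to \cite[Thm.1.2]{C2007} and gives no proof of its own, there is no in-paper argument to compare against; you correctly recognize the statement as an external input and defer to the reference. Your sketch has the right broad shape (Laurent expansion over the $T^0$--character lattice produces the counting measure; a Paley--Wiener/Bargmann--Fock analysis in the vector direction produces the Lebesgue part), but the description of the weight that survives to $\fa_\bc$ is misleading and hides a genuine step. After Parseval on $T^0$, each Laurent coefficient $f_\lambda$ depends only on $w\in\fa_\bc$, so the $\ft$--variable $x$ must be integrated out, and the effective weight on $\fa_\bc$ is the marginal $\widetilde F_\lambda(y)=-\log\int_\ft e^{2i\lambda(x)-F(x,y)}\,dx$, not ``$F$ shifted by the linear term from $\lambda$.'' Convexity of that marginal is not the triviality that linear shifts preserve convexity; it requires Pr\'ekopa's theorem that marginals of log-concave functions are log-concave, and strict convexity together with the growth needed to invoke the vector-factor result need separate verification. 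Likewise ``completing the square'' is literal only for quadratic $F$; for general strictly convex $F$ the finiteness of $\int_\fa e^{2\langle\sigma,y\rangle-\widetilde F_\lambda(y)}\,dy$ is governed by the Legendre transform of $\widetilde F_\lambda$. Finally, reconciling $\im(\hi\widetilde F_\lambda')$ across all $\lambda$ with the corresponding $\lambda$--slices of $\im(\hi F')$ — which you describe as bookkeeping — is precisely the nontrivial convex-analytic content of the theorem, and it is not to be expected to be routine; you are right that it is the main obstacle, but you should not describe it as mere bookkeeping.
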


We next study functions on $Y = G/N \times \fhh$ .
There is a natural embedding and a fibration
\begin{equation}
 \imath : G/N \hookrightarrow Y \;\;,\;\;
\pi : Y \lra G/\hH N .
\label{tadi}
\end{equation}
Here $\imath(g) = (g,0)$ for all $g \in G/N$
and $\pi$ is the natural quotient.
Given $f : Y \lra S$, we let $\imath^*f : G/N \lra S$ be
its pullback to $G/N$.
Each fiber of $\pi$ is diffeomorphic to $\hH \times \fhh$,
so by (\ref{dapat}), it has a complex structure.
We say that $f$ is $\pi$--{\em holomorphic} or {\em partially 
holomorphic} if it is holomorphic on each fiber of $\pi$.
It is the same to consider holomorphic properties on the fibers of
$Y \lra G/H^0N$ and $Y \lra G/HN$, because their fibers have the same
connected components. Let
\[  \caH_{(Y)} \otimes S =
 \{f : Y \lra S  \mid 
  \imath^* f \in {\rm Ker}\,\bbD_M \mbox{ and }
f \mbox{ is $\pi$-holomorphic}\} . \]

Recall that $F$ is a strictly convex function on $\fh$. Let
\beq
  \caHH_{(Y, e^{-F})} \otimes S =
 \left \{f \in \caH_{(Y)} \otimes S \left |
 \int_Y |f(g,x)|^2 e^{-F(x)} d(gN) \, dx < \infty \right . \right \} .
 \label{aap}
 \eeq
The right $\hH$-action on $G/N$ leads to the direct integral decomposition
\begin{equation}
{\rm Ker}\,\bbD_M = \int_{e^\mu \in \wh{\hH}} ({\rm Ker}\,\bbD_M)_\mu \, d \mu .
 \label{taa}
 \end{equation}
No single integrand $({\rm Ker}\,\bbD_M)_\mu$ is contained in
${\rm Ker}\,\bbD_M$ (it is only weakly contained) because it has 
$d\mu$--measure 0.
Thus $f_\mu \in ({\rm Ker}\,\bbD_M)_\mu$ is generally not square-integrable
over $G/N$. It transforms by $e^\mu$ under the right $\hH$-action,
namely $R_h^* f_\mu = e^\mu (h) f_\mu$.

\begin{proposition}
The map $\imath^* : \caHH_{(Y, e^{-F})} \otimes S \lra {\rm Ker}\,\bbD_M$
is injective.  It defines a $G \times \tT$--equivariant isomorphism
\[ \caHH_{(Y, e^{-F})} \otimes S \cong
\int_{\mu \in \im(\hi F'), e^\mu \in \wh{\hH}}
({\rm Ker}\,\bbD_M)_\mu \, d \mu .\]
\label{keyy}
\end{proposition}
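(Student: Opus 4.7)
The strategy is to apply Theorem \ref{cied} fiberwise along the fibration $\pi : Y \to G/\hH N$ and then to translate the Dirac kernel condition at the zero section into a spectral condition on the fiber coefficients.

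Fix a base point $g_0 N \in G/N$ and parametrise the fiber of $\pi$ through $(g_0, 0)$ by $(h, x) \mapsto (g_0 h, x)$, identifying it with $\hH \times \fhh$ equipped with the complex structure of (\ref{dapat}).  The restriction $f_{g_0}$ of $f \in \caHH_{(Y, e^{-F})} \otimes S$ to this fiber lies in $\caHH_{(\hH \times \fhh, e^{-F})} \otimes S$, so the $S$--valued form of Theorem \ref{cied} yields
\[
f_{g_0}(h, x) = \int_{\mu \in \im(\hi F'),\, e^\mu \in \wh{\hH}} c_\mu(g_0)\, e^\mu_\bc(h, x)\, d\mu,
\qquad c_\mu(g_0) \in S.
\]
Changing the base point from $g_0$ to $g_0 h_0$ with $h_0 \in \hH$ gives $f_{g_0 h_0}(h, x) = f_{g_0}(h_0 h, x)$, and since $e^\mu_\bc(h_0 h, x) = e^\mu(h_0)\, e^\mu_\bc(h, x)$ we obtain the covariance $c_\mu(g_0 h_0) = e^\mu(h_0)\, c_\mu(g_0)$.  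Consequently each $c_\mu$ globalises to a function $\phi_\mu : G/N \to S$ in the $e^\mu$--isotypic component of $L^2(G/N) \otimes S$ under the right $\hH$--action, and Fubini identifies the weighted $L^2$--norm of $f$ over $Y$ with $\int \|\phi_\mu\|^2\, d\mu$ after the normalisation of $d\mu$ used in Theorem \ref{cied}.

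Specialising the fiber expansion to $(h, x) = (e, 0)$ yields $\imath^* f(g) = \int \phi_\mu(g)\, d\mu$, which is precisely the right $\hH$--spectral decomposition of $\imath^* f$ inside $L^2(G/N) \otimes S$.  Since $\bbD_M$ is $G \times T$--invariant and commutes with the right $\hH$--action, the decomposition (\ref{taa}) is preserved, so the condition $\imath^* f \in {\rm Ker}\, \bbD_M$ is equivalent to $\phi_\mu \in ({\rm Ker}\, \bbD_M)_\mu$ for almost every $\mu$ in the range prescribed by Theorem \ref{cied}.  Conversely, given a family $(\phi_\mu)$ on the right hand side, the assignment $f(gh, x) := \int \phi_\mu(g)\, e^\mu_\bc(h, x)\, d\mu$ in fiber coordinates at $g$ is well defined by the $\hH$--covariance and Theorem \ref{cied} places $f$ inside $\caHH_{(Y, e^{-F})} \otimes S$.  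Equivariance under $G \times T$ is built in, because both the left $G$--action and the right $T$--action commute with the right $\hH$--action and with the fiberwise complex structure of (\ref{dapat}).  Injectivity of $\imath^*$ follows at once from the reconstruction formula; alternatively, $\hH \times \{0\}$ is a totally real submanifold of maximal real dimension in $\hH \times \fhh$, so any fiberwise holomorphic $f$ with $\imath^* f = 0$ vanishes identically by analytic continuation.

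The principal obstacle is upgrading these fiberwise statements to a genuine isomorphism of direct integrals of Hilbert spaces, namely verifying measurability of $(g, \mu) \mapsto \phi_\mu(g)$ in appropriate Borel structures and carrying out the Fubini interchange that matches the weighted $L^2$--norm on $Y$ against $e^{-F}\, d(gN)\, dx$ with the direct integral norm $\int \|\phi_\mu\|^2\, d\mu$ on the right hand side.  By $\hH$--covariance both tasks reduce to the corresponding statement on a single fiber, which is exactly the content of Theorem \ref{cied}.
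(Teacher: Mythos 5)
Your proposal is correct and follows essentially the same route as the paper: decompose along the right $\hH$--action, extend holomorphically from the zero section along each fiber via the characters $e^\mu_\bc$, and invoke Theorem \ref{cied} on a single fiber to cut the spectrum down to $\im(\tfrac{i}{2}F')$ and to match norms. The only place the paper is more explicit is the square--integrability check in the converse direction, where it first reduces to a bounded parameter set $J$ and proves the pointwise bound $\|f(\cdot,x)\|^2 \le m_x\,\|f(\cdot,0)\|^2$ before applying Theorem \ref{cied} --- exactly the Fubini/measurability step you flag as the principal obstacle.
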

\begin{proof}
We first show that $\imath^*$ is injective.
Suppose that $f_1, f_2 \in \caHH_{(Y, e^{-F})} \otimes S$ satisfy
$\imath^* f_1 = \imath^* f_2$, namely
$f_1$ and $f_2$ agree on $G/N$.  Fix $g \in G/N$, and we have
\beq
f_1 (gh,0) = f_2(gh,0) \mbox{ for all } h \in \hH .
\label{aapq}
\eeq
Being $\pi$--holomorphic,
$f_1, f_2$ are holomorphic on the fiber $(g H^0, \fh)$ of $\pi$.
So together with (\ref{aapq}), we have
\beq
f_1 (gh,x) = f_2(gh,x)  \mbox{ for all } (h,x) \in \hH \times \fh .
\label{ccpq}
\eeq
Since (\ref{ccpq}) holds for all $g \in G/N$,
it follows that $f_1 = f_2$. So $\imath^*$ is injective.
It is clear that $\imath^*$ intertwines with the action of $G \times \tT$.
It remains to prove that
\begin{equation}
 \imath^*(\caHH_{(Y, e^{-F})} \otimes S) =
 \int_{\tiny \mu \in \im(\hi F'),\, e^\mu \in \wh{\hH}} 
({\rm Ker}\,\bbD_M)_\mu \, d \mu .
 \label{duas}
 \end{equation}

 We first check the $\subset$ part of (\ref{duas}).
 Let $f \in \caHH_{(Y, e^{-F})} \otimes S$.
 Then $\imath^* f \in {\rm Ker}\, \bbD_M$, so
 by (\ref{taa}), we write
 \[ f(g,0) = \int_{e^\mu \in \wh{\hH}}
  f_\mu(g) \, d\mu , \]
 where $f_\mu \in ({\rm Ker}\, \bbD_M)_\mu$ transforms by $e^\mu$
 under the right action of $\hH$.
We claim that
 \begin{equation}
  f(g,x) = \int_{e^\mu \in \wh{\hH}}
  f_\mu(g) e_\bc^{\mu(x)} \, d\mu .
  \label{lars}
  \end{equation}
  The function $f_\mu(g) e_\bc^{\mu(x)}$ transforms by
 $e_\bc^\mu$ under the right action of $\hH \times \fh$,
 so it is holomorphic on the fiber $(g \hH,\fh)$ of $\pi$.
 Hence for each $g \in G/N$,
 both sides of (\ref{lars}) agree on
 $(g \hH,0)$ and are holomorphic on $(g \hH, \fh)$,
 so they agree on $(g \hH,\fhh)$.
 This holds for each $g$,
 which proves (\ref{lars}) as claimed.

The restriction of $f$ to $\hH \times \fh$ belongs to
$\caHH_{(\hH \times \fh, e^{-F})} \otimes S$.
 By Theorem \ref{cied}, it
 is a direct integral over $\{\mu \in \im(\hi F')\}_{e^\mu \in \wh{\hH}}$.
 So in (\ref{lars}), $f_\mu = 0$ for
 $\mu \not\in \im(\hi F')$. This proves the $\subset$ part of (\ref{duas}).

Next we prove the $\supset$ part of (\ref{duas}). Pick
\begin{equation}
f^0 = \int_J f_\mu \, d \mu \in \int_J ({\rm Ker}\,\bbD_M)_\mu \, d \mu
\subset {\rm Ker}\, \bbD_M \subset L^2(G/N) \otimes S ,
\label{qqa}
\end{equation}
where $J \subset \im(\hi F')$ is Borel--measurable 
and $e^J \subset \wh{\hH}$. We may assume that $J$
is bounded, as the members of ${\rm Ker}\, \bbD_M$ are Hilbert space sums
of such elements. Let
\[ 
f: Y \lra S \text{ defined by } f (g,x) = \int_J f_\mu(g) e_\bc^{\mu x} \, d \mu .
\]
Here $(\imath^* f)(g) = f(g,0) = f^0(g)$, so 
$\imath^*f \in {\rm Ker}\, \bbD_M$.
Also, $(g,x) \mapsto f_\mu(g) e_\bc^{\mu(x)}$ is a $\pi$-holomorphic 
function for each $\mu$, so $f$ is $\pi$-holomorphic.
Hence $f \in \caH_{(Y)} \otimes S$.
We want to show that $f^0 \in \imath^*(\caHH_{(Y, e^{-F})} \otimes S)$
in (\ref{duas}), so it remains to check that
\beq
\|f\|^2 = \int_Y |f(g,x)|^2 e^{-F(x)} d(gN) \, dx < \infty .
\label{kami}
\eeq

The condition $\imath^*f \in {\rm Ker}\, \bbD_M$ implies that,
in particular,
\beq
f(\cdot , 0) \in L^2(G/N) \otimes S .
\label{wye}
\eeq
The holomorphic homomorphism $e_\bc^{\mu(x)} : H^0 \times \fh \to \bc^\times$ 
maps the $H^0$ and $\fh$ components to $S^1$ and $\br^\times$ respectively.
Fix $x \in \fh$.
Since $J$ is bounded, the set $\{e_\bc^{2 \mu(x)}\}_{\mu \in J}$
is bounded above by some $m = m_x$.
We have
\beq
\begin{array}{rl}
\|f(\cdot,x)\|^2 &
= \int_J \|f_\mu e_\bc^{\mu(x)}\|^2 d \mu
= \int_J \int_{G/N} |f_\mu(g)|^2  e_\bc^{2 \mu(x)} d(gN) \, d \mu \\
& \leq m \int_J \int_{G/N} |f_\mu(g)|^2  d(gN) \, d \mu
= m  \int_J \|f_\mu\|^2 d \mu = m \|f(\cdot,0)\|^2 .
\end{array} \label{jjaa}
\eeq
By (\ref{wye}) and (\ref{jjaa}), for all $x \in \fh$,
\beq
f(\cdot , x) \in L^2(G/N) \otimes S .
\label{wyf}
\eeq

Let $\caH_{(\fh)}$ denote the analytic functions on $\fh$.
By (\ref{wyf}), we can define
\beq
\caH_{(Y)} \otimes S \lra \caH_{(\fh)} \otimes L^2(G/N) \otimes S
\text{ by } f \mapsto \wt{f} ,\label{oren}
\eeq
where $\wt{f}(x) = f(\cdot, x)$ for all $x \in \fh$. Let
\[ \caHH_{(\fh, e^{-F})} \otimes L^2(G/N) \otimes S
= \left \{k \in \caH_{(\fh)} \otimes L^2(G/N) \otimes S \left |
\int_\fh |k(x)|^2 e^{-F(x)} dx < \infty \right . \right \}.\]
For $f \in \caH_{(Y)} \otimes S$, we have
\beq
 \begin{array}{rl}
\|f\|^2 & = \int_Y |f(g,x)|^2 e^{-F(x)} d(gN) \, dx
= \int_\fh (\int_{G/N} |f(g,x)|^2 \, d(gN)) e^{-F(x)} dx \\
& = \int_\fh  \|\wt{f}(x)\|^2 e^{-F(x)} dx
= \|\wt{f}\|^2 .
\end{array}
\label{shee}
\eeq
Hence (\ref{oren}) leads to a norm preserving map
\[ \caHH_{(Y, e^{-F})} \otimes S \lra
\caHH_{(\fh, e^{-F})} \otimes L^2(G/N) \otimes S .\]

Write $\wt{f} = k \otimes v$, where $k \in \caH_{(\fh)}$ and
$v \in L^2(G/N) \otimes S$.
Since (\ref{oren}) intertwines with the right $\fh$-action,
we have $k = \int_J k_\mu \, d \mu$, where $k_\mu \in \bc_\mu$.
By Theorem \ref{cied}, the function
$(h,x) \mapsto \int_J k_\mu(x) e^\mu (h) \, d \mu$ is square integrable
over $\int_{H^0 \times \fh} (\cdot) e^{-F(x)} dh \, dx$
because $J \subset \im(\hi F')$. So
$k$ is square integrable over
$\int_\fh (\cdot) e^{-F(x)} dx$.
It implies that
$\|\wt{f}\| < \infty$, and hence $\|f\| < \infty$ by (\ref{shee}).
We have proved (\ref{kami}), and therefore
$f^0 = \imath^* f \in \imath^*(\caH^2_{(Y, e^{-F})} \otimes S)$
in (\ref{qqa}). This proves the $\supset$ part of (\ref{duas}).
The proposition follows.
\end{proof}

\section{Geometric quantization}\label{sec5}
\setcounter{equation}{0}

In this section, we incorporate partial Dirac cohomology
into symplectic geometry, and prove Theorem \ref{thm2}.
The intended symplectic manifold is
\[ X = G \times \fhh .\]
We first recall some results from \cite[\S3]{C2014}
on the symplectic geometry of $X$.

Let $\Omega^\bullet$ denote the de Rham complex of differential forms.
Superscript denotes group invariance.
So for example $\Omega^1(\hH \times \fhh)^{\hH}$
consists of the $\hH$-invariant 1-forms on $\hH \times \fhh$.

Let $F : \fhh \lra \br$ be a smooth function,
and let $F'$ be its gradient map.
We may also regard $F$ as a function on $\hH \times \fhh$
or $G \times \fhh$ by invariance on the first component.
As in (\ref{dapat}) and (\ref{tadi}), each fiber of
\begin{equation}
 \pi : X \lra G/\hH
 \label{part}
 \end{equation}
inherits a complex structure
from $\hH_\bc = \hH \times \fh$.
As usual $\pa$ and $\bar{\pa}$ denote its Dolbeault operators. Let
\[ 
\be = - \tfrac{i}{2} (\pa - \bar{\pa})F \in \Omega^1(\hH \times \fh)^{\hH} .
\]
Let $\fh^* \hookrightarrow \fgg^*$ be the inclusion
whose image consists of all linear functionals on $\fg$ that annihilate 
all the root spaces of $\fhh$.  It leads to the inclusion
\[ 
\jmath : \Omega^1(\hH \times \fhh)^{\hH} \hookrightarrow \Omega^1(X)^G .
\]

\begin{theorem} {\rm \cite[Thm.3.1]{C2014}}
The 2-form $\omega = d (\jmath \be) \in \Omega^2(X)^{G \times \hH}$
is symplectic if and only if $F'$ is a local diffeomorphism
and its image $\mbox{\rm Im}(F') \subset \fhh_{\rm reg}^*$.
\label{assam}
\end{theorem}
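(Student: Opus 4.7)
The plan is to verify non-degeneracy of $\omega = d(\jmath \be)$ point-by-point (closedness is automatic since $\omega$ is exact), exploiting $G$-invariance to reduce the check to points $(e,x)$ with $x \in \fhh$. At such a point the tangent space is $\fgg \oplus \fhh$; I would split it as $\fhh \oplus \fs' \oplus \fhh$ where $\fs'$ is the sum of the root spaces complementary to $\fhh$ in $\fgg$, and then read off non-degeneracy from the block structure of $\omega_{(e,x)}$ on this decomposition.

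First I would compute $\be$ in the complex coordinates of $(\ref{dapat})$. Writing $z_j = u_j + i x_j$ on $\tT_\bc$ and $w_k = v_k + i y_k$ on $\fa_\bc$, and using that $F$ depends only on $(x,y) \in \fhh$, a direct calculation gives $\be = -\tfrac{1}{2}\sum_j (\pa F/\pa x_j)\,du_j - \tfrac{1}{2}\sum_k (\pa F/\pa y_k)\,dv_k$. In particular $\be$ lies in the $\hH$-cotangent directions only, so it vanishes on vectors tangent to the $\fhh$-factor. At $(e,x)$ its value on the $\hH$-tangent space is $-\tfrac{1}{2}F'(x) \in \fhh^*$, and $\jmath$ extends this functional to $\fgg^*$ by zero on $\fs'$.

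Next I would apply Cartan's formula $d\alpha(V,W) = V\alpha(W) - W\alpha(V) - \alpha([V,W])$ to $\alpha = \jmath\be$, using left-invariant extensions $X_\xi$ of $\xi \in \fgg$ (constant in the $\fhh$-factor) and constant fields $V_\nu$, $\nu \in \fhh$, in the $\fhh$-factor. Only two blocks of $\omega_{(e,x)}$ survive on $(\fhh \oplus \fs') \oplus \fhh$: the pairing of $\fhh \subset \fgg$ with $\fhh = T_x\fhh$ equals $\tfrac{1}{2}F''(x)$, the Hessian of $F$; and the pairing on $\fs' \times \fs'$ equals $(\xi, \xi') \mapsto \tfrac{1}{2}F'(x)([\xi, \xi'])$. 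The remaining blocks vanish because $\fhh$ is abelian, because $[\fhh, \fgg_\alpha] \subset \fgg_\alpha$ is killed by the zero-extended $\jmath F'(x)$, because brackets $[\fgg_\alpha, \fgg_\be]$ for $\alpha + \be \neq 0$ land in root spaces, and because $\be$ itself annihilates the $\fhh$-direction.

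Finally I would interpret the two surviving blocks. The Hessian block is invertible at every $x$ iff $F''(x)$ is non-singular everywhere, which is the statement that $F'$ is a local diffeomorphism. The root block splits under $\fs' = \bigoplus_\alpha \fgg_\alpha$ into pairings $\fgg_\alpha \times \fgg_{-\alpha}$, each a non-zero multiple of $F'(x)(H_\alpha)$, where $H_\alpha$ spans $[\fgg_\alpha, \fgg_{-\alpha}]$; these are all non-zero iff $F'(x)$ is regular, i.e.\ $F'(x) \in \fhh_{\rm reg}^*$. Putting the two conditions together yields the theorem. The main obstacle I anticipate is the careful bracket bookkeeping needed for the off-diagonal vanishings, and, when $\fhh$ is not maximally split, grouping the complex-conjugate root pairs to verify that non-degeneracy of the real-form root block is still characterized by the single condition $F'(x) \in \fhh_{\rm reg}^*$.
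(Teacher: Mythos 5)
Your argument is correct, and it is the natural direct verification: the paper itself does not prove Theorem \ref{assam} but imports it from \cite[Thm.\ 3.1]{C2014}, where the proof is exactly this kind of pointwise block computation. Your reduction to $(e,x)$, the formula $\be = -\tfrac12\sum_j F_{x_j}\,du_j - \tfrac12\sum_k F_{y_k}\,dv_k$, and the resulting block matrix on $\fh \oplus \fs' \oplus \fh$ (Hessian block pairing the two copies of $\fh$, root block pairing $\fg_\alpha$ with $\fg_{-\alpha}$ via $F'(x)(H_\alpha)$, all other blocks vanishing) are all right, and the conjugate-root-pair issue you flag is genuinely routine since $F'(x)$ is real on $\fh$, so $F'(x)(H_{\bar\alpha}) = \overline{F'(x)(H_\alpha)}$ and regularity is preserved under conjugation.
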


Here $\fhh_{\rm reg}^*$ consists of the elements of
$\fhh^*$ that are not perpendicular to any root.
We shall fix a strictly convex function $F$ whose gradient has image
$\mbox{Im}(F') \subset \fhh_{\rm reg}^*$.
The strictly convex condition implies that
$F'$ is a local diffeomorphism, so the $2$--form $\omega$ constructed above
is symplectic.

The $G \times \hH$-action on $X$ preserves $\om$ and is Hamiltonian,
and the right $\hH$-action has a canonical moment map \cite[\S11]{GS1984}
\begin{equation}
 \Phi : X \lra \fhh^* \text{ given by }
\Phi(g,x) = \tfrac{1}{2} F'(x)
\label{cano}
\end{equation}
for all $(g,x) \in G \times \fhh = X$ \cite[Prop.3.2]{C2014}.
The conventions in here and \cite{C2014} differ by a factor $2$,
namely \cite{C2014} uses $\be = - i(\pa - \bar{\pa})F$
and $\Phi(g,x) = F'(x)$.

We next perform geometric quantization \cite{K1970} on
the symplectic manifold $(X,\om)$.
There is a complex line bundle $\blx \to X$
whose Chern class of $\blx$ is the cohomology class $[\om]$.
The construction in Theorem \ref{assam} shows that
$\om$ is exact, so $[\om]=0$ and $\blx$ is topologically trivial.
However, $\blx$ has interesting geometry, as it has a connection
$\nabla$ whose curvature is $\om$, as well as
an invariant Hermitian structure.
If $W \subset X$ is a submanifold with a complex structure,
we say that a section $f$ of $\bl$ is holomorphic on $W$
if $\nabla_\xi f$ vanishes on $W$ whenever $\xi$ is an anti-holomorphic
vector field on $W$. Each fiber of $\pi$ of (\ref{part})
is complex, and we say that $f$ is $\pi$--{\em holomorphic} or
{\em partially holomorphic} if it is
holomorphic on each fiber of $\pi$.

There is a natural action of $G \times G$ on $X$,
given by the left and right actions on the $G$-component.
It lifts to a representation of $G \times G$ on the sections of $\blx \to X$.

\begin{proposition} \label {spee}
{\rm \cite[Cor.3.4]{C2014}}
There exists a $(G \times G)$--invariant
non-vanishing section $f_0$ of $\blx$
which is $\pi$-holomorphic, and $(f_0, f_0)(g,x) = e^{-F(x)}$
for all $(g,x) \in G \times \fh = X$.
\end{proposition}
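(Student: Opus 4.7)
The plan is to build $\blx$ explicitly from the exact $2$-form $\om = d(\jmath\be)$ provided by Theorem \ref{assam}, and then read off $f_0$ as the section $e^{-F/2}$ in a natural trivialization.  First I would realize $\blx$ as the topologically trivial bundle $X \times \bc \to X$ equipped with the connection $\nabla = d - i(\jmath\be)$, whose curvature is $d(\jmath\be) = \om$ as required.  Since $F : \fh \to \br$ is real, a quick check gives $\overline{\be} = \be$, so $\jmath\be$ is a real $1$-form; consequently the trivial Hermitian structure $(s_0, s_0) = 1$ in the global trivialization $s_0 \equiv 1$ is preserved by $\nabla$, and the natural lift of the $(G \times G)$-action on $X$ to this trivial bundle acts on sections in precisely the same way as on functions.

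With this trivialization fixed I would then define $f_0(g, x) := e^{-F(x)/2}$, i.e.\ $f_0 = e^{-F/2} s_0$.  Non-vanishing is immediate, the identity $(f_0, f_0) = |e^{-F/2}|^2 (s_0, s_0) = e^{-F}$ holds by construction, and $(G \times G)$-invariance is automatic because $F(x)$ depends only on the $\fh$-coordinate, which is untouched by either left or right $G$-translation on $X = G \times \fh$.

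The only step that requires an actual calculation is $\pi$-holomorphy on each fiber of $\pi : X \to G/\hH$.  There, $\jmath\be$ restricts to $\be = -\tfrac{i}{2}\pa F + \tfrac{i}{2}\bar\pa F$, so for any anti-holomorphic fiberwise vector field $\bar\xi$ one has $\be(\bar\xi) = \tfrac{i}{2}\bar\xi(F)$.  Substituting into $\nabla_{\bar\xi} f_0 = \bar\xi(e^{-F/2}) s_0 - i\,\be(\bar\xi)\,e^{-F/2} s_0$ yields the two terms $-\tfrac{1}{2}\bar\xi(F)e^{-F/2}$ and $+\tfrac{1}{2}\bar\xi(F)e^{-F/2}$, which cancel.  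This is exactly the match between the factor $\tfrac{1}{2}$ in $\be$ and the factor $\tfrac{1}{2}$ coming from differentiating $e^{-F/2}$, and it is the place where the particular choice $\psi = e^{-F/2}$ is forced.

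The main potential obstacle is really a bookkeeping one: ensuring that the normalizations of the prequantum connection ($\nabla = d - i\jmath\be$ with curvature $\om$), the compatible Hermitian structure on $\blx$, and the fiberwise complex structure of (\ref{dapat}) all line up so that the cancellation in the $\pi$-holomorphy check comes out cleanly.  Once these conventions are pinned down by matching with \cite[Thm.3.1]{C2014}, the rest is direct verification, and $(G \times G)$-invariance comes for free from the observation that $f_0$ is a function of $x$ alone.
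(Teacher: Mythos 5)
Your construction is correct, and it is essentially the standard argument: the paper itself gives no proof of Proposition \ref{spee} but simply cites \cite[Cor.\ 3.4]{C2014}, where $\blx$ is realized exactly as you describe, as the trivial bundle with connection $d - i(\jmath\be)$ and trivial Hermitian metric, with $f_0 = e^{-F/2}$ in that trivialization. Your cancellation check $-\tfrac12\bar\xi(F) + \tfrac12\bar\xi(F) = 0$ is the right computation and is consistent with this paper's normalization $\be = -\tfrac{i}{2}(\pa-\bar\pa)F$ (which, as the authors note, differs from \cite{C2014} by a factor of $2$, precisely so that $(f_0,f_0) = e^{-F}$ matches the weight in the Bergman space).
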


Recall $Y = G/N \times \fhh$.
If a section $f$ of $\bl$ is invariant under the right action of 
$N$, then so is $(f,f)$, and we identify $(f,f)$ with a function on $Y$.
Let
\beq
 L^2(\bl)^N =
\left \{ \mbox{$N$-invariant sections $f$ of $\bl$} \left |
\int_{G/N \times \fh} (f,f)(g,x) \, d(gN) \, dx < \infty \right . \right \} .
\label{twon}
\eeq
Then the action of $G \times \tT$ on $L^2(\blx)^N$ is a unitary representation.

Let $f_0$ denote the
section in Proposition \ref{spee}.
For all $f \in C^\infty (G \times \fh)^N$, we have
\[ \int_Y (ff_0,ff_0)(g,x) \, d(gN) \, dx
= \int_Y |f_(g,x)|^2 e^{-F(x)} d(gN) \, dx ,\]
so the trivialization $ff_0 \mapsto f$ defines a
$(G \times \tT)$--equivariant isometry
\beq
L^2(\bl)^N \cong L^2(Y, e^{-F}) .
\label{aba}
\eeq

Let $\imath$ denote both embeddings $G \hookrightarrow X$
and $G/N \hookrightarrow Y$, where $\imath(g) = (g,0)$.
So $\imath^* \bl$ is a line bundle on $G$.
Since $f_0$ is $G$-invariant, we can normalize it so that
$(f_0,f_0)(g,0)=1$ for all $g \in G$. Then for all $f \in C^\infty(G)^N$,
\[ \int_{G/N} (f(\imath^*f_0),f(\imath^*f_0))(g) \, d(gN)
= \int_{G/N} |f(g)|^2 \, d(gN) ,\]
so the trivialization $ff_0 \mapsto f$ leads to an isometry
\beq
 L^2(\imath^* \bl)^N \otimes S \cong L^2(G/N) \otimes S.
 \label{gme}
 \eeq

Since (\ref{gme}) is $(G \times M)$--equivariant,
it induces an operator
 \[ \bbD_\bl : L^2(\imath^* \bl)^N \otimes S \lra L^2(\imath^* \bl)^N \otimes S \]
 such that (\ref{gme}) intertwines $\bbD_\bl$ and $\bbD_M$. Let
\[ 
\caH^2(\bl)^N \otimes S = \{f \in L^2(\bl)^N \otimes S \mid
\imath^* f \in {\rm Ker}\, \bbD_\bl \mbox{ and $f$ is $\pi$--holomorphic}\}.
\]
Since (\ref{gme}) also preserves the $\pi$--holomorphic property,
together with (\ref{aba}), they imply 
\beq
\caH^2(\bl)^N \otimes S \cong \caH^2_{(Y, e^{-F})} \otimes S .
\label{sees}
\eeq

\noindent {\it Proof of Theorem \ref{thm2}:}

By Proposition \ref{keyy}, (\ref{cano}) and (\ref{sees}),
\begin{equation}
\caH_{(X, \om)} \cong \caH^2(\blx)^N \otimes S
\cong \caH^2_{(Y, e^{-F})} \otimes S
\cong \int_{\tiny \begin{array}{l}
\mu \in \im(i \Phi) \\
e^\mu \in \wh{\hH} \end{array}} ({\rm Ker}\,\bbD_M)_\mu \, d \mu .
\label{hap}
\end{equation}

Write $\mu = \la + i \si \in \im(i \Phi)$, where $e^\la \in \wh{\tT}$.
By Theorem \ref{assam} and (\ref{cano}), $\mu$ is $MA$--regular.
Let $\rho_\fa$ be half the sum of positive $\fa$--roots relative to $N$.
Set $w=1$ and replace $\la$ by $-\la$ in Theorem \ref{thm1}, 
then pick out the integrand which contains $e^{-i \si + \rho_\fa}$ and $e^\la$
to get
\begin{equation}
({\rm Ker}\,\bbD_M)_{\lambda + i\sigma}
 \cong \sum_{\chi \in \widehat{Z_M(M^0)}_\lambda}
 {\ind}_{M^\dagger AN}^G (\caH_{\eta_{\chi,-\la}} \otimes e^{-i \si + \rho_\fa} \otimes 1)
\otimes \caH_{(\bar{\chi} \otimes e^\la)} .
\label{adzz}
\end{equation}
The $\rho_\fa$-shift in (\ref{adzz}) is due to the definition
of unitarily induced representation; see for example \cite[VII \S1]{K1986}.
The theorem follows from (\ref{hap}) and (\ref{adzz}).$\hfill$$\Box$

We take this opportunity to revise an error in \cite{CH2016}.
For $G$ connected and with compact Cartan subgroup, \cite[Thm.B]{CH2016}
has a false expression
\[ \caH_{(X, \om)} =
\left ( {\sum}_{\la \in \im(\Phi) , \pi_\la \in \wh{G}_{disc}} 
\caH_{\eta_\la} \right )
\otimes \left ({\sum}_{w \in W_K} \bc_{-w \la}\right ) ,\]
as the summation over $W_K$ should not appear.

\begin{corollary} {\rm (Erratum to \cite[Thm.B]{CH2016}) }
If $G$ is connected and has a compact Cartan subgroup, then
\[ \caH_{(X, \om)} \cong
{\sum}_{\la \in {\rm Im}(i\Phi) , e^\la \in \wh{\tT}}
\caH_{\eta_{-\la}} \otimes \bbC_\la . \]
\label{spc}
\end{corollary}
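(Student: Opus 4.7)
\smallskip
\noindent\textbf{Proof plan for Corollary \ref{spc}.}
The corollary is a direct specialization of Theorem \ref{thm2}, so the plan is simply to walk through that theorem under the simplifying hypotheses and check that each piece collapses to the claimed expression.

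First, I would observe that the assumption that $G$ has a compact Cartan subgroup $T$ forces $H=T$, $\fa = 0$, $A=\{1\}$, $N=\{1\}$, $M=G$, and $P=MAN=G$. In particular, $\sigma \in \fa^* = 0$ disappears from the direct integral, $\rho_\fa = 0$, and the parabolic induction $\mathrm{Ind}_P^G$ is trivial, so the outer induced factor in Theorem \ref{thm2} becomes just the representation space $\caH_{\bar\chi \otimes \eta^0_{-\la}}$.

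Next, since $G$ is connected, $M^0 = G$, so $Z_M(M^0) = Z_G \subset T$. The compatibility condition $\chi \in \widehat{Z_M(M^0)}_\lambda$ then uniquely determines $\chi$ from $\lambda$ via restriction, so the sum $\sum_{\chi \in \widehat{Z_M(M^0)}_\lambda}$ collapses to a single summand. The representation $\bar\chi \otimes \eta^0_{-\lambda}$ reduces (on account of the central-character match between $\bar\chi$ and $\eta^0_{-\lambda}$) simply to the discrete series representation $\eta_{-\lambda}$ of $G$, and the remaining tensor factor $\chi \otimes e^\lambda$ on $T = T^0$ reduces to the one-dimensional space $\bbC_\lambda$. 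Substituting these into Theorem \ref{thm2} (with $d\lambda$ now counting measure on the weight lattice intersected with $\mathrm{Im}(i\Phi)$, so the direct integral becomes a direct sum) gives precisely the formula of the corollary.

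The main and only subtle point — the reason this amounts to an erratum rather than a triviality — is the disappearance of the factor $\sum_{w\in W_K}$ that was erroneously present in \cite[Thm.B]{CH2016}. This factor does appear in Theorem \ref{thm1}, but it is distributed over the direct-integral parameters $w\lambda$ rather than concentrated at each $\lambda$: Theorem \ref{thm2} selects the single integrand indexed by the parameter $\lambda+i\sigma \in \mathrm{Im}(i\Phi)$ (corresponding to $w=1$ in the prescription preceding Theorem \ref{thm2}), while the remaining Weyl conjugates $w\lambda$ appear as separate points of the direct integral. The main thing to verify is precisely this bookkeeping, ensuring no Weyl-orbit multiplicity is double-counted when passing from the Plancherel-type statement of Theorem \ref{thm1} to the pointwise statement of Theorem \ref{thm2}; once that is in place, the corollary is immediate.
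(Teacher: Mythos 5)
Your proposal is correct and follows essentially the same route as the paper: specialize Theorem \ref{thm2} to $A=1$ (so $\sigma$, $\rho_\fa$ and the parabolic induction disappear) and use connectedness of $G$ to collapse the sum over $\chi$, yielding $\caH_{\eta_{-\la}}\otimes\bbC_\la$. Your additional remark about where the spurious $W_K$ factor of \cite[Thm.B]{CH2016} goes — it is absorbed into the parametrization of the integrand in the passage from Theorem \ref{thm1} to Theorem \ref{thm2} via the choice $w=1$ — is exactly the right accounting and matches the paper's proof of Theorem \ref{thm2}.
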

\begin{proof}
Suppose that $G$ has a compact Cartan subgroup.
Then $A=1$, so the $\la + i \si$ of Theorem \ref{thm2} becomes
$\la$, and $\chi$ does not occur because $G$ is connected, so
${\ind}_P^G (\caH_{\eta_{\chi,-\la}} \otimes e^{-i \si + \rho_\fa} \otimes 1)
\otimes \caH_{(\bar{\chi} \otimes e^\la)}$
becomes $\caH_{\eta_{-\la}} \otimes \bc_\la$.
\end{proof}

\begin{remark} \label{rkm}
{\it Models of tempered representations.}
{\rm
According to Gelfand, a {\em model} of a compact Lie group is a
unitary representation which contains every equivalence class
of irreducible representation once \cite{GZ1984}.
For non-compact reductive Lie groups, this notion extends
to models of discrete series \cite{C2000} and principal series \cite{C2008}.
We briefly sketch the construction of models of standard tempered 
representations in \cite[\S4]{C2014}, taking advantage of the fact that
we make similar construction in Theorem \ref{thm2}.

There exist $(G \times \hH)$--invariant symplectic forms $\om_1, ..., \om_m$ 
on $X$ with moment maps $\Phi_1, ..., \Phi_m : X \lra i \fh^*$, such that 
the image of
$$ 
\begin{aligned} 
\bigcup_{j=1}^m &\{ \la + i \si \in \im(\Phi_j) \mid 
	e^{\la + i \si} \in \wh{\hH}\}/W_G  
	\lra \{\mbox{sum of tempered representations}\} \\
& \text{ given by } \la + i \si \mapsto
	{\sum}_{\chi \in \widehat{Z_M(M^0)}_\lambda}
	{\ind}_P^G (\caH_{\eta_{\chi,-\la}} \otimes 
		e^{-i \si + \rho_\fa} \otimes 1)
\end{aligned} 
$$
is multiplicity-free and contains every
equivalence class of standard tempered representations
(thus almost every tempered representation -- 
the missing tempered representations  have Plancherel measure zero).
By Theorem \ref{thm2},
$\sum_1^m \caH_{(X, \om_j)}$ is a model of tempered representations
in the sense that
every standard tempered representation occurs once.
}
\end{remark}

\section{Symplectic reduction}\label{sec6}
\setcounter{equation}{0}

Let $\om$ be a $(G \times H)$--invariant symplectic form on
$X = G \times \fhh$.  Let
$\mu \in \im(i\Phi) \subset i \fhh^*$ where $e^\mu \in \wh{\hH}$.
In this section, we carry out symplectic reduction \cite{MW1974}
for the right action of $\hH$
to obtain the symplectic quotient $(X_\mu,\om_\mu)$.
Then we apply geometric quantization to $(X_\mu,\om_\mu)$
and prove Theorem \ref{thm3}.

Recall symplectic reduction from \cite[\S5]{C2014}.
The moment map (\ref{cano}) of the right action of $\hH$ is
$\Phi : X \lra \fhh^*$.
There is a unique $v \in \fhh$ such that
$(i\Phi)^{-1}(\mu) = G \times \{v\} \subset X$.
Let $\imath$ and $\jmath$ be the maps in (\ref{ijij}).
Then there is a unique $G$--invariant symplectic form $\om_\mu$
on $G/\hH$ such that $\jmath^* \om_\mu = \imath^* \om$.
We have
\begin{equation}
 \om_\mu = d \mu \in \Om^2(G/\hH)^G .
 \label{ggg}
 \end{equation}
As $\mu \in \fhh^*$,
$d \mu \in \we^2 \fhh^* \subset \we^2 \fgg^* \cong \Om^2(G)^G$.
Furthermore $d \mu$ lies in the image of the natural injection
$\Om^2(G/\hH)^G \hookrightarrow \Om^2(G)^G$,
which explains (\ref{ggg}). The notation $d\mu$
does not imply that $\om_\mu$ is exact
(for example if $G/\hH$ is compact, 
it cannot have an exact symplectic form)
because $\mu$ does not lie in the image
of $\Om^1(G/\hH)^G \hookrightarrow \Om^1(G)^G$.
We obtain the symplectic quotient
\[ (X_\mu, \om_\mu) = (G/\hH, d \mu) .\]

We shall incorporate Dirac cohomology into the
geometric quantization of $(X_\mu, \om_\mu)$, so we modify the
line bundle for the spinor bundle over $G/\hH N$,
\[ B_\mu = G \times_\mu S \text{ defined by }
[ghn,s] = [g,\chi_\mu^{-1}(h)s] \in B_\mu \]
for all $g \in G$, $hn \in \hH N$ and $s \in S$.
Here $S$ is the same spinor as (\ref{gme}).
A section $f$ of $B_\mu$ can be identified with a function
$\psi : G/N \lra S$ such that
$\psi(gh) = e^\mu(h) \psi(g)$ for all $h \in H$,
given by $f(g) = [g, \psi(g)]$.
This gives a Hermitian structure on the sections by
$(f,f) = (\psi,\psi)$.

Recall that $G =KMAN$.
Here $G/\hH N$ has no $G$--invariant measure
because $\hH N$ is not unimodular, nevertheless
$G/\hH N$ has a measure $d(g H^0 N)$ which is $K$ and $M$-invariant
(\cite[Prop.8.44]{K2002}, \cite[p.2748]{C2014}). Let
\[ L^2(B_\mu) = \left \{\mbox{sections $f$ of $B_\mu$} \left |
\int_{G/\hH N} (f,f) \, d(gH^0 N) < \infty\right . \right \} .\]
The above correspondence $f \mapsto \psi$ leads to a $G$-equivariant map
\beq \label{indi}
L^2(B_\mu) \cong (L^2(G/N) \otimes S)_\mu .
\eeq

In (\ref{indi}), $(L^2(G/N) \otimes S)_\mu$
is the $\mu$-component of the direct integral decomposition of
$L^2(G/N) \otimes S$.  It is not a subspace, but is only weakly contained there.
In (\ref{dira0}), $\bd_M$ stabilizes each
$(L^2(G/N) \otimes S)_\mu$,
and we let $\bd_{M,\mu}$ denote the resulting operator.
It induces an operator $D_\mu$ on
$L^2(B_\mu)$, such that (\ref{indi})
intertwines $D_\mu$ with $\bd_{M,\mu}$.
Hence
\beq
{\rm Ker}\, D_\mu \cong {\rm Ker}\, \bd_{M,\mu}.
\label{tong}
\eeq

We define the quantization on the symplectic quotient as
\beq \caH_{(X_\mu, \om_\mu)} = {\rm Ker}\, D_\mu .
\label{yamm}
\eeq

\sk

\noindent {\it Proof of Theorem \ref{thm3}:}

Let $\mu = \la + i \si$ belong to the image $\im(i\Phi)$, 
where $e^\mu \in \wh{\hH}$.  Then
\[
\begin{array}{rll}
\caH_{(X,\om),\mu}
& \cong \sum_{\chi \in \widehat{Z_M(M^0)}_\lambda}
{\ind}_P^G (\caH_{\eta_{\chi,-\la}} \otimes e^{-i \si + \rho_\fa} \otimes 1)
& \mbox{by Theorem \ref{thm2}} \\
& \cong {\rm Ker}\, \bd_{M,\mu}
& \mbox{by Theorem \ref{thm1}} \\
& \cong {\rm Ker}\, D_\mu
& \mbox{by (\ref{tong})} \\
& = \caH_{(X_\mu, \om_\mu)}.
& \mbox{by (\ref{yamm})}
\end{array}
\]
This proves the theorem.$\hfill$$\Box$

\section{Background for general real reductive groups}\label{sec7}
\setcounter{equation}{0}
In this section we extend Theorems \ref{thm1}, \ref{thm2} and \ref{thm3}
from groups of Harish-Chandra class to general real reductive Lie groups.
Recall that the latter class, introduced in \cite{W1974a}, is given by
\begin{equation} \label{gen-real-class2}
\begin{aligned}
  &\text{(a) the Lie algebra $\fgg$ of $G$ is reductive}, \\
  &\text{(b) if $g \in G$ then $\Ad(g)$ is an inner automorphism of
        $\fgc$, and}\\
  &\text{(c) $G$ has a closed normal abelian subgroup $Z$ such that}\\
      &\phantom{XXii}  (i)\,\, Z \text{ centralizes } G^0\,,
                \text{ i.e. } Z \subset Z_G(G^0) \\
      &\phantom{XXi}  (ii)\,\, |G/ZG^0| < \infty \text{ and } \\
      &\phantom{XX}  (iii)\,\, Z \cap G^0 \text{ is co-compact in } Z_{G^0}\,.
\end{aligned}
\end{equation}
Without loss of generality we always assume $Z_{G^0} \subset Z$, so
(iii) becomes $Z \cap G^0 = Z_{G^0}$\,.

As $Z$ centralizes $G^0$ it centralizes $\fg$.  Thus $Z$ centralizes 
every Cartan subalgebra of $\fg$ and so it is contained in every 
Cartan subgroup of $G$.  The point is that, by definition, ``Cartan subgroup''
means the centralizer of a Cartan subalgebra.
We have to be careful here because it can happen 
that two $G$--conjugate Cartan subgroups of $G$ may fail to be $G^0$--conjugate.

Given a unitary character $\zeta \in \widehat{Z}$, define
\begin{equation}\label{def-gzeta}
\widehat{G}_\zeta = \{\pi \in \widehat{G} \mid 
	\pi(gz) = \zeta(z)\pi(g) \text{ for } g \in G \text{ and } z \in Z\}
\end{equation}
and
\begin{equation}
\label{def-l2zeta}	
L^2(G/Z;\zeta) = \left \{ f:G \to \bc \left |
	\begin{array}{l}  f \text{ is measurable} \\
		f(gz) = \zeta(z)^{-1}f(g)\, a.e.\, z \in Z, g \in G \\
		\int_{G/Z} |f(gZ)|^2 d(gZ) < \infty
	\end{array} \right . \right \}.	
\end{equation}
Induction by stages gives $L^2(G/Z;\zeta) = {\ind}_Z^G \bc_\zeta$ and
$L^2(G) = \int_{\zeta \in \widehat{Z}} L^2(G/Z;\zeta)d\zeta$.
But this is a bit redundant for $\widehat{G}$.
If $g \in G$ and $\zeta' = \Ad^*(g)\zeta$ then $\Ad^*(g): \widehat{G}_\zeta
\to \widehat{G}_{\zeta'}$ is a bijection (and homeomorphism for the 
hull--kernel topology) from $\widehat{G}_\zeta$ onto $\widehat{G}_{\zeta'}$,
sending $\pi' \in \widehat{G}_{\zeta'}$ to an equivalent representation
$\pi \in \widehat{G}_\zeta$.  This depends only on $gZG^0 \in G/ZG^0$.
Thus
$$
\Ad^*(g): L^2(G/Z;\zeta') \cong L^2(G/Z;\zeta) \text{ and }
\widehat{G} = {\bigcup}_{\zeta \in (\widehat{Z}/\Ad^*(G/ZG^0))} 
	\widehat{G_\zeta}.
$$
Here note that $\widehat{Z}/\Ad^*(G/ZG^0)$ is finite by
(\ref{gen-real-class2})(ii).

If $Z$ is noncompact the coefficients of a representation 
$\pi \in \widehat{G}_\zeta$ cannot be square integrable over $G$.
So we consider square integrability over $G/Z$.  This is
well defined because $G/ZG^0$ is finite.
More precisely, consider a coefficient 
$f_{\pi,u,v}(g) = \langle u, \pi(g)v\rangle$, $u,v \in \ch_\pi$\,.
Then $f_{\pi,u,v}(gz) = \zeta(z)^{-1}f_{\pi,u,v}(g)$, so $|f_{\pi,u,v}(gz)|
= |f_{\pi,u,v}(g)|$ for $g \in G$ and $z \in Z$, and $|f_{\pi,u,v}|$
is defined on $G/Z$.  We say that $f_{\pi,u,v}$ is {\em square
integrable} or {\em square integrable modulo $Z$} if
$f_{\pi,u,v} \in L^2(G/Z)$.  The following are equivalent
for $\pi \in \widehat{G}_\zeta$\,.
$$
\begin{array}{l}
	\text{(a) there exist nonzero } u,v \in \ch_\pi \text{ with } 
		f_{\pi,u,v} \in L^2(G/Z) \\
	\text{(b) } f_{\pi,u,v} \in L^2(G/Z) \text{ for every } u, v \in L^2(G/Z) \\
	\text{(c) } \pi \text{ is a (discrete) summand of the left 
	regular representation of } G \text{ on } L^2(G/Z;\zeta)
\end{array}
$$
Then we say that $\pi$ is a {\em relative discrete series representation }
of $G$.  The relative discrete series representations in $\widehat{G}_\zeta$
form the subset denoted $\widehat{G}_{\zeta,disc}$\,.
  
From Harish--Chandra's famous result, $G$ has relative discrete series
representations if and only if $G/Z$ has a compact Cartan subgroup.
When $G$ satisfies (\ref{gen-real-class2}), Levi components of cuspidal
parabolic subgroups also satisfy (\ref{gen-real-class2}).  Thus we can
construct the various tempered series more or less in the same way
as when $G$ is of Harish--Chandra class.  Further, the Plancherel
formula depends only on these tempered series.  See \cite{W1974a},
or \cite{W2018} for an update, or \cite{HW1986a} and \cite{HW1986b} for
a short direct proof.

Let $H$ be a Cartan subgroup of $G$.  As for Harish--Chandra class it is 
stable under a Cartan involution $\theta$, leading to a decomposition 
$H = T \times A$ and cuspidal parabolic subgroups $MAN$.  Here $T$ is a
Cartan subgroup of $M$, $Z \subset T$, and $T/Z$ is compact.  Thus,
given $\zeta \in \widehat{Z}$ we have the part $\widehat{M}_{\zeta,disc}$
of the relative discrete series of $M$ that transforms by $\zeta$, as
follows.  Let $\lambda \in i\ft^*$ such that (i) $e^\lambda$
is well defined on $T^0$, (ii) $\lambda$ is regular for $\Sigma_{\fmc,\ftc}$\,,
and (iii) $\zeta|_{Z\cap T^0} = e^\lambda_{Z\cap T^0}$\,.  Then as in
(\ref{eta0}) we have
\begin{equation}\label{eta0zeta}
\eta^0_\lambda \in \wh{M^0}_{\zeta,disc} \text{ with } 
        \Theta_{\eta^0_\lambda} 
        = \pm \tfrac{1}{\Delta_{M^0}} {\sum}_{w \in W_{M^0}} 
        \det(w)e^{w(\lambda)}
\text{ on } (M^0)''\cap T^0\,,
\end{equation}
as in (\ref{etadag}), where we avoid clutter by writing $\zeta$ instead
of $\zeta|_{(Z\cap M^0)}$\,.  Now we have
\begin{equation}\label{etadagzeta}
\eta^\dagger_{\chi,\lambda} :=
\chi \otimes \eta^0_\lambda \in \wh{M^\dagger}_{\zeta,disc} \text{ with }
\Theta_{\eta^\dagger_{\chi,\lambda}}(zm) = \tr \chi(z)
        \Theta_{\eta^0_\lambda}(m),
\end{equation}
and as in (\ref{def-ds-M}) we have $\eta_{\chi,\lambda} \in
\widehat{M}_{\zeta,disc}$ given by
\begin{equation}\label{def-ds-Mzeta}
\eta_{\chi,\lambda} := {\ind}_{M^\dagger}^M(\eta^\dagger_{\chi,\lambda})
\text{ with }
\Theta_{\eta_{\chi,\lambda}}(zm) = {\sum}_{xM^\dagger \in M/M^\dagger}
        \tr \chi(x^{-1}zx) \Theta_{\eta^0_\lambda}(x^{-1}mx).
\end{equation}
As before, $\eta_{\chi,\lambda}$ has infinitesimal character of Harish--Chandra
parameter $\lambda$ and formal degree 
$\deg(\eta_{\chi,\lambda}) = |M/M^\dagger|\deg(\chi)|\varpi(\lambda)|$,
and $\eta_{\chi,\lambda} \cong \eta_{\chi',\lambda'}$ just when their
$M^\dagger$--restrictions are equivalent.  Every representation in
$\widehat{M}_{\zeta,disc}$ is one of the $\eta_{\chi,\lambda}$ just described,
and the relative discrete series $\widehat{M}_{disc} 
= \bigcup_{\zeta \in \widehat{Z}} \widehat{M}_{\zeta,disc}$\,.

Similarly, if $\sigma \in \fa^*$, so $e^{i\sigma} \in \widehat{A}$,
\begin{equation}\label{def-ds-Gzeta}
\pi_{\chi,\lambda,\sigma}:= 
	{\ind}_{MAN}^G(\eta_{\chi,\lambda} \otimes e^{i\sigma} \otimes 1)
	= {\ind}_{M^\dagger AN}^G(\eta^\dagger_{\chi,\lambda} 
		\otimes e^{i\sigma} \otimes 1)
\end{equation}
is a unitary representation of $G$, and these representations ($\zeta$ fixed,
$\chi$ and $\lambda$ variable) form the $H$--series part of 
$\widehat{G}_\zeta$\,.  That $H$--series part depends only on the
$G$--conjugacy class of $(H,\zeta)$, and as these vary we sweep out all
but a set of measure zero in $\widehat{G}$.

As usual we fix a Cartan involution $\theta$ on $G$, a splitting
$\fg = \fk + \fp$ into $(\pm 1)$--eigenspaces of $d\theta$, and a 
nondegenerate $\Ad(G)$--invariant symmetric bilinear
form $b$ on $\fg$ that is negative definite on $\fk$, positive definite
on $\fp$ and satisfies $b(\fk,\fp) = 0$.  Our $\theta$--stable Cartan
subalgebra $\fh = \ft + \fa$ and we have the corresponding cuspidal
parabolic subalgebra $\fm + \fa + \fn$ of $\fg$, with an orthogonal 
direct sum decomposition $\fm = \ft + \fs$.  Using the spin module $S$
for the Clifford algebra $C(\fsc)$ the modified Dirac operator
$D_M = D_{(\fmc, \fkc \cap \fmc)} + i D_{diag;(\fkc \cap \fmc, \ftc)}$
of (\ref{mdirac}) is defined as in \cite[(1.1)]{CH2016} and its closure
$\bd_M$ as in (\ref{dira0}).  
This is possible because $D_M$ is defined in terms of the
Lie algebra, so its construction is the same as the construction for
groups of Harish--Chandra class.  

To be precise, note that $Z$ centralizes $N$ because $N \subset G^0$
and $Z$ centralizes $G^0$.  Further $Z$ centralizes $\fs$ and thus also 
the spin module $S$.  Now 
\begin{equation}\label{thm1za}
L^2(G/N) = \int_{\zeta \in \widehat{Z}} L^2(G/NZ;\zeta)d\zeta
\text{ and } L^2(G/N) \otimes S = 
\int_{\zeta \in \widehat{Z}} \left \{ L^2(G/NZ;\zeta) \otimes S\right \} d\zeta
\end{equation}
as unitary $G$--module, and
\begin{equation}\label{thm1zb}
\bd_M = \int_{\zeta \in \widehat{Z}}\bd_{\zeta,M} d\zeta \text{ where }
 \bd_{\zeta,M}: L^2(G/NZ;\zeta)\otimes S \to L^2(G/NZ;\zeta)\otimes S.
\end{equation}

Using $T = Z_M(M^0)$ every irreducible unitary representation of $H$ has form 
$\chi_{\lambda,\sigma}:=\chi \otimes e^\lambda \otimes e^{i\sigma}$
as in (\ref{mmm}).  In particular each such $\chi_{\lambda,\sigma}$
is finite dimensional.

The extension of Theorem \ref{thm1} to general real reductive Lie groups 
has both a relative formulation and an absolute
formulation.  The relative formulation is
\begin{theorem}\label{thm1rel}
Let $G$ be a general real reductive Lie group as in 
{\rm (\ref{gen-real-class})}.  Let $\zeta \in \widehat{Z}$.
In the notation of {\rm (\ref{thm1za})} and {\rm (\ref{thm1zb})},
$$
{\rm Ker}(\bd_{\zeta,M}) = 
	{\sum}_{\eta_{\chi, \lambda} \in\widehat M_{\zeta, disc}}
        \left (\int_{\sigma\in\fa^*} \ch_{\pi_{\chi,\lambda,\sigma}}
                \, d\sigma \right )
\otimes \left ({\sum}_{w\in W_{K \cap M}}
        \ch_{(\bar\chi\cdot w^{-1} \otimes e^{-w\lambda})} \right )
$$
and 
the natural action of $G \times T$ on ${\rm Ker}(\bd_{\zeta,M})$ is
$$
{\sum}_{\eta_{\chi, \lambda} \in\widehat M_{\zeta, disc}}
        \left (\int_{\sigma\in\fa^*} \pi_{\chi,\lambda,\sigma}
                \, d\sigma \right )
\otimes \left ({\sum}_{w\in W_{K \cap M}}
       (\bar\chi\cdot w^{-1} \otimes e^{-w\lambda}) \right ) .
$$
\end{theorem}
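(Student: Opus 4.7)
The plan is to mirror the proof of Theorem \ref{thm1} step by step, but refine every Plancherel and Dirac-cohomology computation to a fixed central character $\zeta \in \widehat{Z}$. Since $Z$ centralizes $G^0$ it centralizes the Lie algebra, hence $\fn$, $\fa$, $\fs$ and the spin module $S$; this is what allows the decompositions \eqref{thm1za}--\eqref{thm1zb} to diagonalize $\bd_M$ as $\int_{\widehat Z} \bd_{\zeta,M}\,d\zeta$, thereby reducing the statement to identifying the kernel of the closed symmetric operator $\bd_{\zeta,M}$ on $L^2(G/NZ;\zeta)\otimes S$ for each $\zeta$ separately.

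First I would establish the $\zeta$-refined analogue of Proposition \ref{sat}: as a unitary $G\times M$ module,
\[
L^2(G/NZ;\zeta) \cong \int_{\sigma\in\fa^*} {\ind}_{P\times M}^{G\times M}\bigl(L^2(M/Z;\zeta)\otimes e^{i\sigma}\otimes 1\bigr)\,d\sigma .
\]
The proof is formally identical to Section \ref{sec2} -- the diffeomorphism $(G\times M)/NM_{diag}\cong G/N$ of Proposition \ref{ide}, the vanishing of the modular characters in \eqref{lil}, induction in stages through $NM_{diag}\subset MN\times M\subset P\times M \subset G\times M$, and the Plancherel theorem for $L^2(A,\,\cdot\,)$ -- with $L^2(M)$ systematically replaced by $L^2(M/Z;\zeta)={\ind}_Z^M \bc_\zeta$. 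Centrality of $Z$ leaves all modular-function and induction-in-stages computations unchanged.

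Next I would establish the $\zeta$-refined analogue of Theorem \ref{key}: as an $(M\times T)$-module,
\[
{\rm Ker}\bigl(\bd\bigr|_{L^2(M/Z;\zeta)\otimes S}\bigr) = {\sum}_{\eta_{\chi,\lambda}\in\widehat{M}_{\zeta,disc}} \caH_{\eta_{\chi,\lambda}}\otimes {\sum}_{w\in W_{K\cap M}}\bigl(\caH_{\chi^*\cdot w^{-1}}\otimes \bc_{-w\lambda}\bigr).
\]
The proof of Proposition \ref{dbar} is purely local to each irreducible summand and rests only on closability of symmetric operators together with the algebraic Dirac cohomology of a Harish--Chandra module, so it applies verbatim to $\eta_{\chi,\lambda}\in\widehat{M}_{\zeta,disc}$ in the general class \eqref{gen-real-class2}. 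The analogue of Lemma \ref{dsDM} -- that only the relative discrete spectrum contributes -- carries over as well, since nonzero Dirac cohomology forces a real infinitesimal character, which for tempered representations forces relative discrete series via Harish--Chandra's Plancherel theorem for $L^2(M/Z;\zeta)$; the required Plancherel decomposition is available in \cite{W1974a,W2018} and \cite{HW1986a,HW1986b}.

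Finally I would assemble these two ingredients as in the proof of Theorem \ref{thm1}: the $\zeta$-refined Proposition \ref{sat} writes $L^2(G/NZ;\zeta)\otimes S$ as a direct integral over $\sigma\in\fa^*$ of parabolically induced representations whose $M$-factor is $L^2(M/Z;\zeta)\otimes S$; the operator $\bd_{\zeta,M}$ acts only on that $M$-factor; and the $\zeta$-refined Theorem \ref{key} identifies its kernel on each slice, substituting $H_D(\caH_{\eta_{\chi,\lambda}}^*)$ for each relative discrete series summand. The main obstacle is bookkeeping of central characters on the finite-dimensional $T$-factor: one must verify that each induced representation $\pi_{\chi,\lambda,\sigma}$ lies in $\widehat{G}_\zeta$ and that $\bar\chi\cdot w^{-1}\otimes e^{-w\lambda}$ restricts to $\zeta^{-1}$ on $Z$. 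Both are immediate from the compatibility $\zeta|_{Z\cap T^0} = e^\lambda|_{Z\cap T^0}$ and $\chi|_Z = \zeta$ that are built into \eqref{eta0zeta}--\eqref{def-ds-Mzeta}, together with the fact that $Z$ lies in the center of both $M$ and $MAN$ so that parabolic induction preserves the $Z$-character.
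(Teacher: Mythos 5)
Your overall route is the same as the paper's: fix $\zeta$, rerun the Harish--Chandra-class argument of Sections \ref{sec2}--\ref{sec3} with $L^2(M)$ replaced by $L^2(M/Z;\zeta)$ and $\widehat{M}_{disc}$ by the relative discrete series $\widehat{M}_{\zeta,disc}$, and reassemble. The $\zeta$-refined analogues of Proposition \ref{sat} and of Lemma \ref{dsDM} do go through as you describe, and your central-character bookkeeping at the end is correct.

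The gap is in the sentence claiming that Proposition \ref{dbar} ``applies verbatim.'' The content of Proposition \ref{dbar} is not only the functional-analytic statement ${\rm Ker}\,c\ell(D)={\rm Ker}\,D$; it is the explicit identification $H_D(V_{\chi,\lambda})=\sum_{w\in W_{K\cap M}}\ch_{\chi\cdot w^{-1}}\otimes\bc_{w\lambda}$, and that identification rests on \cite[Proposition 5.4]{HP2002}, which in turn rests on Vogan's lowest-$K$-type theory (\cite{V1984}, \cite{VZ1984}) for the discrete series of groups of Harish--Chandra class. For a general real reductive $M$ as in (\ref{gen-real-class}), the subgroup $Z_M(M^0)$ may be noncompact with noncompact center, $\chi$ need not be one-dimensional, and one only has a \emph{relative} discrete series; none of the cited lowest-$K_M$-type results apply directly. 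This is exactly the point the paper singles out as delicate: it builds the quotient groups $M^\dagger[\varphi]$ and $M[\varphi]$ of (\ref{defMphi}), proves the transfer of representations, Plancherel measure and characters in Theorem \ref{Gdchi}, restricts characters to $K_M$ in Lemma \ref{resKzeta} and Proposition \ref{resK}, and thereby obtains the lowest-$K_M$-type statement Corollary \ref{low-k-type} for $\eta_{\chi,\lambda}\in\widehat{M}_{\zeta,disc}$. Only with that in hand does the computation of $H_D$ for relative discrete series --- and hence your $\zeta$-refined Theorem \ref{key} --- become available. Your proposal needs this reduction-to-compact-center step (or some substitute for it); without it the key Dirac-cohomology input is unjustified for the general class.
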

The absolute formulation of our extension of Theorem \ref{thm1} is
\begin{corollary}\label{thm1abs}
In the notation of {\rm (\ref{thm1za})} and {\rm (\ref{thm1zb})},
$$
{\rm Ker}(\bd_M) = \int_{\zeta \in \widehat{Z}}
	\left \{ {\sum}_{\eta_{\chi, \lambda} \in\widehat M_{\zeta, disc}}
        \left (\int_{\sigma\in\fa^*} \ch_{\pi_{\chi,\lambda,\sigma}}
                \, d\sigma \right )
\otimes \left ({\sum}_{w\in W_{K \cap M}}
    \ch_{(\bar\chi\cdot w^{-1} \otimes e^{-w\lambda})} \right )\right \} d\zeta
$$
and the natural action of $G \times T$ on ${\rm Ker}(\bd_M)$ is
$$
\int_{\zeta \in \widehat{Z}}
        \left \{ {\sum}_{\eta_{\chi, \lambda} \in\widehat M_{\zeta, disc}}
        \left (\int_{\sigma\in\fa^*} \pi_{\chi,\lambda,\sigma}
                \, d\sigma \right )
\otimes \left ({\sum}_{w\in W_{K \cap M}}
	(\bar\chi\cdot w^{-1} \otimes e^{-w\lambda}) \right ) \right \}d\zeta.
	$$
\end{corollary}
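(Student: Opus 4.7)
The plan is to obtain Corollary \ref{thm1abs} as a direct consequence of Theorem \ref{thm1rel} by integrating the fiberwise statement over $\zeta \in \widehat{Z}$. The two ingredients are the decomposition (\ref{thm1za}) of the Hilbert space $L^2(G/N) \otimes S$ and the decomposition (\ref{thm1zb}) of $\bd_M$ as a decomposable operator; Corollary \ref{thm1abs} is then obtained by applying Theorem \ref{thm1rel} fiberwise and assembling.

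First I would justify carefully the decomposition (\ref{thm1zb}). Since the modified Dirac operator lives in $U(\fmc) \otimes C(\fsc)$ and $Z \subset Z_G(G^0)$ centralizes both $\fm$ (and hence $\fs$) and the spin module $S$, the operator $\bd_M$ commutes with the right $Z$--action on $L^2(G/N) \otimes S$. Consequently $\bd_M$ is decomposable relative to the direct integral (\ref{thm1za}), and its restriction to the fiber over $\zeta$ is precisely the operator $\bd_{\zeta,M}$ defined in the statement. The $G \times T$--equivariance of the entire decomposition is automatic: the left $G$--action on $L^2(G/N)$ commutes with the right $Z$--action (so it preserves the $\zeta$--isotypic component), and $T$ contains $Z$ and acts on each $L^2(G/NZ;\zeta) \otimes S$ in the natural way.

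Next I would invoke the standard fact from the theory of direct integrals of Hilbert spaces that for a decomposable closed operator $T = \int_\zeta T_\zeta\, d\zeta$ on $\int_\zeta \mathcal{K}_\zeta \, d\zeta$, one has
\[
\mathrm{Ker}(T) = \int_\zeta \mathrm{Ker}(T_\zeta)\, d\zeta
\]
as a closed $G \times T$--stable subspace, with the natural action of $G \times T$ restricting fiberwise. Applied to $\bd_M$ and $\bd_{\zeta,M}$, this yields $\mathrm{Ker}(\bd_M) = \int_{\widehat Z} \mathrm{Ker}(\bd_{\zeta,M})\, d\zeta$.

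Finally I would substitute the explicit description of $\mathrm{Ker}(\bd_{\zeta,M})$ from Theorem \ref{thm1rel} into this integral. Using that $\widehat{M}_{disc} = \bigcup_{\zeta} \widehat{M}_{\zeta,disc}$ (a disjoint union indexed by the central character) the two displayed formulas of Corollary \ref{thm1abs} drop out, and the $G \times T$--equivariance of the result is inherited from the fiberwise equivariance in Theorem \ref{thm1rel}. There is essentially no obstacle here; the only minor checks are the measurability of the field $\zeta \mapsto (\pi_{\chi,\lambda,\sigma},\bar\chi \cdot w^{-1} \otimes e^{-w\lambda})$ in the parameters, which follows from the standard measurable structure on $\widehat{M}$ and on $\widehat{G}$ for real reductive groups satisfying (\ref{gen-real-class2}), and the verification that the Plancherel decomposition of $L^2(G/NZ;\zeta)$ used implicitly in Theorem \ref{thm1rel} integrates up to the Plancherel decomposition of $L^2(G/N)$, which is exactly (\ref{thm1za}).
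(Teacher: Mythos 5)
Your approach is exactly the paper's: the authors dispose of Corollary \ref{thm1abs} in a single sentence, noting that it is an immediate consequence of Theorem \ref{thm1rel} obtained by integrating over $\widehat{Z}$ (modulo the conjugation action of $G/ZG^0$). Your proposal is that same integration, with the decomposability of $\bd_M$ relative to (\ref{thm1za}), the compatibility of kernels with the direct integral, and the reassembly via $\widehat{M}_{disc} = \bigcup_{\zeta}\widehat{M}_{\zeta,disc}$ spelled out explicitly.
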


Corollary \ref{thm1abs} is an immediate consequence of Theorem \ref{thm1rel};
one just integrates over $\widehat{Z}$ modulo the conjugation action of
$G/ZG^0$.
We will prove Theorem \ref{thm1rel} in Section \ref{sec9}.  This will
use an extension, in Section \ref{sec8}, of a reduction method developed 
in \cite{W1974a}.

The discussion leading to the statement of Theorem \ref{thm2}, 
and also Theorem \ref{assam}, is valid for general real
reductive Lie groups with essentially no modification.  We have 
$(G \times H)$--invariant symplectic forms $\omega$ on $X = G \times \fh$.
For each such $\omega$ we have a complex line bundle $\bl \to X$ with
$c_1(\bl) = \omega$.  The inclusion $\imath : G \to X$, $\imath(g) = (g,0)$, 
pulls $\bl$ back to a line bundle $\imath^*\bl \to G$.  Fix 
$\zeta \in \widehat{Z}$. Then we have
$$
\begin{aligned}
& L^2(\bl)^N_\zeta \text{: measurable sections $f$ of $\bl \to X$ such that} \\
&\phantom{XXXX}\text{$f(gnz,x) = \zeta(z)^{-1}f(g,x)$ and 
	$\int_{G/NZ \times \fh} |f(g,x)|^2 d(gNZ)dx < \infty$, and} \\
& L^2(\imath^*\bl)^N_\zeta \text{: measurable sections $f$ of $\imath^*\bl \to G$ 
	such that} \\
&\phantom{XXXX}\text{$f(gnz) = \zeta(z)^{-1}f(g)$ and 
	$\int_{G/NZ} |f(g)|^2 d(gNZ) < \infty$.}
\end{aligned}
$$
Let $\bd_{\zeta,\bl}$ denote the Dirac operator on 
$L^2(\imath^*\bl)^N_\zeta \otimes S$.  Then we have a unitary 
representation $\pi_{\zeta,(X,\omega)}$ with representation space
$$
\ch_{\zeta,(X,\omega)} = \{f \in L^2(\bl)^N_\zeta \mid
	\imath^*f \in \text{Ker\,}\bd_{\zeta,\bl} \text{ and $f$ is
	$\pi$--holomorphic}\}.
$$
Now recall that the action of $G\times  H$ on $(X,\omega)$ is
Hamiltonian, so the right action of $H$ has moment map
$\Phi: X \to \fh^*$ as in the case where $G$ is of Harish--Chandra class.
The relative version of the extension of Theorem \ref{thm2} is
\begin{theorem}\label{thm2rel}
Let $G$ be a general real reductive Lie group as in 
{\rm (\ref{gen-real-class})}.
Fix $\zeta \in \widehat{Z}$.  The unitary representation space
$\ch_{\zeta,(X,\omega)}$ for $G \times T$ is
$$
\int_{\tiny \begin{array}{l} \la + i\sigma \in \im(i \Phi)\\
	\exp(\la + i\sigma) \in \wh{H^0_\zeta} \end{array}}
{\sum}_{\tiny \chi \in \widehat{Z_M(M^0)}_\lambda}
  \left ( {\ind}_P^G(\caH_{\bar\chi \otimes \eta^0_{-\la}}
       \otimes e^{-i\sigma +\rho_\fa} \otimes 1)
       \otimes \ch_{(\chi\otimes e^\lambda)} \right ) \, d \la \, d \si
$$
where $\widehat{Z_M(M^0)}_\lambda$ denotes the elements of $Z_M(M^0)$
that agree with $e^\lambda$ on $Z_{M^0}$, and thus agree with $\zeta$
on $Z$.
\end{theorem}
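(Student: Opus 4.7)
The plan is to parallel the proof of Theorem \ref{thm2} from Section \ref{sec5}, carried out within the $\zeta$-isotypic subspace under the central subgroup $Z$. Because $Z$ centralizes $G^0$, it acts trivially on the Lie algebra and hence on $\fs$, on the spin module $S$, on the manifold $X = G \times \fh$, and on $\om$, and it preserves both the line bundle $\bl$ and the partial complex structure. Its sole effect is on how sections of $\bl$ transform, so the entire Section \ref{sec5} construction decomposes as a direct integral over $\wh Z$, and it suffices to identify the $\zeta$-component.

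The first step is a $\zeta$-equivariant version of Proposition \ref{keyy}. Trivializing $\bl$ via the $(G \times G)$-invariant section $f_0$ of Proposition \ref{spee} identifies $\ch_{\zeta,(X,\om)}$ with the space of $Z$-$\zeta$-equivariant partially holomorphic sections of $\bl$ on $Y$ whose $\imath^*$-restrictions lie in ${\rm Ker}\,\bd_{\zeta,M}$. The injectivity--surjectivity argument of Proposition \ref{keyy} then transcribes verbatim with two cosmetic changes: the right-action direct integral over $\wh{H^0}$ is restricted throughout to characters $e^\mu$ with $e^\mu|_{Z \cap H^0} = \zeta|_{Z \cap H^0}$, i.e.~to $\wh{H^0_\zeta}$; and Theorem \ref{cied} is applied to the Bergman space on $H^0/(Z \cap H^0) \cong T^0/(Z \cap T^0) \times A$, still the product of a compact torus and a vector space, yielding a weighted $L^2$ holomorphic decomposition as in the Harish--Chandra class case but indexed by $\wh{H^0_\zeta}$.

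Once this is in place, the moment map formula $\Phi(g,x) = \tfrac{1}{2} F'(x)$ from (\ref{cano}) gives
$$
\ch_{\zeta,(X,\om)} \cong \int_{\mu \in \im(i\Phi),\, e^\mu \in \wh{H^0_\zeta}}
({\rm Ker}\,\bd_{\zeta,M})_\mu \, d\mu.
$$
Writing $\mu = \la + i\si$ and substituting Theorem \ref{thm1rel} (extracting the $w = 1$ summand, replacing $\la$ by $-\la$, and inserting the $\rho_\fa$-shift from unitary induction as in (\ref{adzz})), the integrand becomes the sum over $\chi \in \widehat{Z_M(M^0)}_\lambda$ of ${\ind}_P^G(\caH_{\bar\chi \otimes \eta^0_{-\la}} \otimes e^{-i\si + \rho_\fa} \otimes 1) \otimes \ch_{\chi \otimes e^\lambda}$, which is the claimed decomposition.

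The main obstacle is verifying consistency of the central-character conditions. The integration range demands $e^{\la + i\si}|_{Z \cap H^0} = \zeta|_{Z \cap H^0}$, while Theorem \ref{thm1rel} supplies characters $\chi \in \widehat{Z_M(M^0)}$ with $\chi|_{Z_{M^0}} = e^\lambda|_{Z_{M^0}}$ and built-in compatibility with $\zeta$ via $\wh{M}_{\zeta,disc}$. Because $Z \subset T = Z_M(M^0)$, because $e^{i\si}$ is trivial on the compact subgroup $Z \cap H^0$, and because $Z \cap M^0 = Z_{M^0}$ under the assumption $Z_{G^0} \subset Z$, these compatibility conditions dovetail exactly: the $\chi$ occurring in the sum over $\widehat{Z_M(M^0)}_\lambda$ automatically satisfy $\chi|_Z = \zeta$, and conversely the integration constraint forces $e^\la|_Z = \zeta$. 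With this bookkeeping verified, the remainder of the proof is a routine transcription from Section \ref{sec5}.
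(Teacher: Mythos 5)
Your proposal is correct and follows essentially the same route as the paper: the paper likewise obtains Theorem \ref{thm2rel} by rerunning the Section \ref{sec4}--\ref{sec5} argument (the $\zeta$-equivariant analogue of Proposition \ref{keyy}, the moment map identification (\ref{cano}), and substitution of Theorem \ref{thm1rel} for Theorem \ref{thm1}), its only emphasized ``delicate point'' --- replacing the appeal to \cite[Proposition 5.4]{HP2002} by the lowest $K_M$--type result of Corollary \ref{low-k-type} --- living inside the proof of Theorem \ref{thm1rel}, which you legitimately take as given. Your central-character bookkeeping matches the paper's as well, modulo the harmless imprecision that $Z\cap H^0$ need not be compact (its split part lies in $A$), which is already absorbed by the condition $\exp(\lambda+i\sigma)\in\wh{H^0_\zeta}$ in the statement.
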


Now we sum over $\widehat{Z}$.  Let $\bd_\bl$ denote the Dirac operator on 
$L^2(\imath^*\bl)^N \otimes S$ and $\pi_{(X,\omega)}$ the corresponding
representation on
$$
\ch_{(X,\omega)} = \{f \in L^2(\bl)^N \mid
        \imath^*f \in \text{Ker\,}\bd_\bl \text{ and $f$ is
        $\pi$--holomorphic}\}.
$$
The absolute version of the extension of Theorem \ref{thm2} is
\begin{corollary}\label{thm2abs}
The unitary representation space
$\ch_{(X,\omega)}$ for $G \times T$ is
$$
\int_{\tiny \begin{array}{l} \la + i\sigma \in \im(i \Phi)\\
        \exp(\la + i\sigma) \in \wh{H^0} \end{array}}
{\sum}_{\tiny \chi \in \widehat{Z_M(M^0)}_\lambda}
  \left ( {\ind}_P^G(\caH_{\bar\chi \otimes \eta^0_{-\la}}
       \otimes e^{-i\sigma +\rho_\fa} \otimes 1)
       \otimes \ch_{(\chi\otimes e^\lambda)} \right ) \, d \la \, d \si
$$
where $\widehat{Z_M(M^0)}_\lambda$ denotes the elements of $Z_M(M^0)$
that agree with $e^\lambda$ on $Z_{M^0}$.
\end{corollary}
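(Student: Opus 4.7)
The plan is to derive Corollary \ref{thm2abs} from Theorem \ref{thm2rel} by integrating over the central characters $\zeta \in \widehat{Z}$, in exactly the way that Corollary \ref{thm1abs} follows from Theorem \ref{thm1rel}. First, I would set up the central decomposition of the section space,
\[
L^2(\bl)^N = \int_{\zeta \in \widehat{Z}} L^2(\bl)^N_\zeta \, d\zeta,
\qquad
L^2(\imath^*\bl)^N \otimes S = \int_{\zeta \in \widehat{Z}} L^2(\imath^*\bl)^N_\zeta \otimes S \, d\zeta,
\]
using that $Z$ is central, normal, and acts on the fibers of $\bl$ and $\imath^*\bl$ through $\zeta$. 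These are the line-bundle analogues of (\ref{thm1za}).

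Second, I would verify that each piece of the construction commutes with this central disintegration. The Dirac operator $\bd_\bl$ is built from $U(\fmc) \otimes C(\fsc)$, so it commutes with the central action of $Z$ and therefore splits as $\bd_\bl = \int_{\widehat{Z}} \bd_{\zeta,\bl} \, d\zeta$. The $\pi$-holomorphy condition is a pointwise differential condition on fibers of $\pi : X \to G/H$, which is preserved by the $Z$-isotypic projections because $Z$ acts by scalars on each fiber. Combining these observations gives a $(G \times T)$-equivariant identification
\[
\ch_{(X,\omega)} = \int_{\zeta \in \widehat{Z}} \ch_{\zeta,(X,\omega)} \, d\zeta.
\]

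Third, I would insert the formula from Theorem \ref{thm2rel} into each integrand and verify that the resulting double integral collapses to the single integral over $\lambda + i\sigma$ with $\exp(\lambda+i\sigma) \in \widehat{H^0}$ asserted in the corollary. The key combinatorial point is that the condition $\chi \in \widehat{Z_M(M^0)}_\lambda$ on the summation index already encodes which $\zeta$ is relevant: since $Z \subset Z_M(M^0)$ and $T = Z_M(M^0)T^0$, the pair $(\chi, e^\lambda)$ determines its restriction to $Z$, and the constraint $\exp(\lambda+i\sigma)|_Z = \zeta$ automatically selects the correct $\zeta$-fiber. Thus the disjoint union $\bigsqcup_{\zeta \in \widehat{Z}} \widehat{H^0_\zeta}$ reassembles to all of $\widehat{H^0}$ with matching measures.

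The main obstacle will be the bookkeeping in the third step: making sure the Plancherel measure $d\zeta$ on $\widehat{Z}$ combined with the counting-times-Lebesgue measure on each $\widehat{H^0_\zeta}$ assembles consistently into the counting-times-Lebesgue measure on $\widehat{H^0}$, and making sure nothing is double-counted or lost on the measure-zero set where reducibility or $G$-conjugation on $\widehat{Z}$ could interfere. Once one checks that $\widehat{H^0} \to \widehat{Z}$ given by restriction is a fibration whose fibers are exactly the $\widehat{H^0_\zeta}$, so that Fubini applies, the result drops out of Theorem \ref{thm2rel}.
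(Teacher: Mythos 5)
Your proposal is correct and follows exactly the paper's route: the paper treats Corollary \ref{thm2abs} as immediate from Theorem \ref{thm2rel} by integrating over $\zeta \in \widehat{Z}$ (the same way Corollary \ref{thm1abs} follows from Theorem \ref{thm1rel}), which is precisely your central disintegration of $L^2(\bl)^N$, $\bd_\bl$, and the $\pi$-holomorphy condition followed by reassembly of the $\widehat{H^0_\zeta}$ into $\widehat{H^0}$. Your version simply spells out the bookkeeping that the paper leaves implicit.
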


Again, we will prove Theorem \ref{thm2rel} in Section \ref{sec9},  using
an extension, from Section \ref{sec8}, of a reduction method developed 
in \cite{W1974a}.

Theorem \ref{thm3}, the principle that quantization commutes with reduction,
is valid as stated for general real reductive Lie groups.  We will go over
the argument toward the end of Section \ref{sec9}.

\section{Reduction to the case of compact center}\label{sec8}
\setcounter{equation}{0}

In this section we state and prove Theorem \ref{Gdchi}, which will 
reduce the proofs of Theorems \ref{thm1rel} and
\ref{thm2rel} to the case where $G^\dagger = Z_G(G^0)G^0$ has compact center,
so that we can identify discrete series representations by lowest $K$--type.

Since $Z$ centralizes $G^0$ we have $Z_{G^0} \subset Z \subset Z_G(G^0)$.
Let $\zeta \in \widehat{Z}$\,, $\chi \in \widehat{Z_G(G^0)}_\zeta$\,, 
$\ch_\chi$ its representation
space, and $U = U(\ch_\chi)$ the unitary group of $\ch_\chi$.
Recall that $\dim\caH_\chi < \infty$, so $U$ is compact.  Of course we
have the defining representation $1_U \in \widehat{U}$.  It is the usual
representation of $U(\ch_\chi)$ on $\ch_\chi$\,, given by $1_U(z) = z$.
If $L$ is any closed subgroup of $G$ of the form $Z_G(G^0)L^0$ then we
denote
\begin{equation}\label{ell}
L[\chi] = (U \times L)/\{(\chi(z)^{-1},z) \mid z \in Z_G(G^0)\}.
\end{equation}
In particular we have the quotient groups
\begin{equation}\label{defGdchi}
\begin{aligned}
G^\dagger[\chi] = &(U \times G^\dagger)/\{(\chi(z)^{-1},z) 
	\mid z \in Z_G(G^0)\},\\
G[\chi] = &(U \times G)/\{(\chi(z)^{-1},z) 
        \mid z \in Z_G(G^0)\}.
\end{aligned}
\end{equation}
Note that $G^\dagger[\chi]$ is the identity component of $G[\chi]$.
They are general real reductive Lie groups as in (\ref{gen-real-class}).
We write $p$ for the restriction to $G$  of the projection
$(U \times G) \to G[\chi]$ and also for the restriction to 
$G^\dagger$ of $(U \times G^\dagger) \to G^\dagger[\chi]$.
Then $p$ induces an isomorphism 
$G/G^\dagger \cong G[\chi]/G^\dagger[\chi]$.

\begin{lemma}\label{gchi}
$G^\dagger[\chi]$ is a connected reductive Lie group with Lie algebra
$\fu \oplus (\fg^\dagger/\fz)$.  It has compact center $U$.  
For appropriate normalizations
of Haar measures, $f \mapsto f\cdot p$ defines an equivariant isometry
of $L^2(G^\dagger[\chi]/U,1_U)$ onto $L^2(G^\dagger/Z_G(G^0),\chi)$.
\end{lemma}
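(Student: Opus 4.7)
The plan is to handle the three assertions of the lemma in sequence.

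First I would compute the Lie algebra. The projection $U\times G^\dagger\to G^\dagger[\chi]$ has kernel the closed subgroup $\Gamma=\{(\chi(z)^{-1},z):z\in Z_G(G^0)\}$. Because $Z_G(G^0)$ centralises $G^0$, its Lie algebra coincides with the centre $\fz$ of $\fg$, and therefore $\mathrm{Lie}(\Gamma)=\{(-d\chi(X),X):X\in\fz\}$. Fixing any reductive splitting $\fg^\dagger=\fs\oplus\fz$, the linear map
\[
\phi:\fu\oplus\fg^\dagger\longrightarrow\fu\oplus(\fg^\dagger/\fz),\qquad (A,\,Y_\fs+Y_\fz)\mapsto(A+d\chi(Y_\fz),\,Y_\fs+\fz)
\]
has kernel precisely $\mathrm{Lie}(\Gamma)$ and (because $d\chi(\fz)\subset Z(\fu)$ and $\fz$ is central in $\fg$) preserves brackets, so it descends to the required Lie algebra isomorphism. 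Connectedness of $G^\dagger[\chi]$ then follows from $G^\dagger=Z_G(G^0)G^0$: any $(u,zg_0)\in U\times G^\dagger$ is $\Gamma$-equivalent to $(u\chi(z),g_0)\in U\times G^0$, and $U\times G^0$ is connected.

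Second I would pin down the centre. Since $\fg^\dagger/\fz\cong\fg'$ is semisimple, the centre of $\fu\oplus(\fg^\dagger/\fz)$ reduces to $Z(\fu)$. In the setup of the paper $Z_G(G^0)$ sits inside a Cartan subgroup and is abelian, so $\chi$ is a character, $\ch_\chi$ is one-dimensional, and $U=U(1)$ already equals $Z(\fu)$; moreover $u\mapsto[u,1]$ is a closed embedding whose image is central in $G^\dagger[\chi]$ because $U$ is abelian. Combining this with Step 1 and connectedness of $G^\dagger[\chi]$ identifies $Z(G^\dagger[\chi])$ with the image of $U$, which is compact.

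Third I would verify the $L^2$ isomorphism. A direct computation gives $p(gz)=[1,gz]=[\chi(z),g]=p(g)\cdot[\chi(z),1]$ for $z\in Z_G(G^0)$, so for any $f\in L^2(G^\dagger[\chi]/U,1_U)$,
\[
(f\cdot p)(gz)=f\bigl(p(g)\cdot[\chi(z),1]\bigr)=\chi(z)^{-1}(f\cdot p)(g),
\]
so $f\cdot p$ satisfies the transformation law of (\ref{def-l2zeta}) with $\chi$ in place of $\zeta$. The induced map $G^\dagger/Z_G(G^0)\to G^\dagger[\chi]/U$, $gZ_G(G^0)\mapsto p(g)U$, is a $G^\dagger$-equivariant diffeomorphism by Step 1, so it carries each $G^\dagger$-invariant Radon measure to a positive multiple of the other; after normalising Haar measures, $f\mapsto f\cdot p$ becomes norm-preserving, and left $G^\dagger$-equivariance is automatic from functoriality of $p$.

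I expect the main obstacle to be Step 1, where one must keep track of the $d\chi$-twist while verifying simultaneously that $\phi$ has the correct kernel, is surjective, and is a Lie-algebra homomorphism, and where one must also handle the (possibly disconnected) group $Z_G(G^0)$ carefully to ensure $U\times G^0$-representatives really sweep out the quotient. Steps 2 and 3 are largely bookkeeping, with the single subtlety that compactness of $Z(G^\dagger[\chi])$ leans on $\chi$ being a character so that $U$ itself is abelian.
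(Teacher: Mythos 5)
Your Step~3 (the $L^2$ isometry) is essentially the paper's own argument, just routed through the diffeomorphism $G^\dagger/Z_G(G^0)\cong G^\dagger[\chi]/U$ instead of a direct norm computation and explicit inverse formula as the paper gives; both are fine and yield the same conclusion. Your Step~1 (Lie algebra identification and connectedness) supplies detail that the paper suppresses by citing \cite[Lemma 3.3.2]{W1974a}; your computation is correct, including the crucial use of $d\chi(\fz)\subset Z(\fu)$, which holds because $\fz=\mathrm{Lie}(Z_{G^0})$ acts by scalars under the irreducible $\chi$ (Schur).

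Step~2, however, has a genuine gap. You assert that $Z_G(G^0)$ sits inside a Cartan subgroup and \emph{is therefore abelian}, so that $\ch_\chi$ is one-dimensional and $U=U(1)$. That is false in the generality of Section~\ref{sec8}: the paper explicitly allows $\dim\ch_\chi>1$ (``Recall that $\dim\caH_\chi<\infty$, so $U$ is compact'') and even remarks after Theorem~\ref{Gdchi} that ``one need only be careful about noncommutativity of $U$ when $\dim\ch_\chi>1$ or $\dim\ch_\varphi>1$.'' Cartan subgroups of a general real reductive group as in (\ref{gen-real-class}) need not be abelian — the Introduction says so, restricting the abelian statement to the linear case — so $Z_G(G^0)$ need not be abelian and $\chi$ need not be a character. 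Your identification of the centre therefore covers only the special case $\dim\ch_\chi=1$. (A separate warning: when $\dim\ch_\chi>1$ the image of $U$ in $G^\dagger[\chi]$ is not actually central — only $Z(U)$ is — so the statement ``compact center $U$'' is itself best read in the spirit of Wolf's abelian prototype; but whatever the intended reading, your derivation of it rests on an assumption the paper does not make.)
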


\begin{proof}  We follow the proof of \cite[Lemma 3.3.2]{W1974a}, which is
the case $G^\dagger = ZG^0$.  There $\chi = \zeta$ and $U$ is the circle 
group $U(1)$.  If $f \in L^2(G^\dagger[\chi]/U,1_U)$, $z \in Z_G(G^0)$ and 
$g \in G^\dagger$ then
$$
(f\cdot p)(gz) = f(1,gz) = f(\chi(z),g) = \chi(z)^{-1}f(1,g)
	= \chi(z)^{-1}(f\cdot p)(g)
$$
and
$$
\begin{aligned}
\int_{G^\dagger/Z_G(G^0)} |(f\cdot p)(g)|^2 d(gZ_G(G^0))
	&= \int_{(U\times G^\dagger)/(U\times Z_G(G^0))} |f(u,g)|^2
		d(uU \times gZ_G(G^0)) \\
	&= \int_{G^\dagger[\chi]/U} |f(\overline{g})|^2 d(\overline{g}U).
\end{aligned}
$$
Thus $f \mapsto f\cdot p$ is an isometric injection of 
$L^2(G^\dagger[\chi]/U,1_U)$ into $L^2(G^\dagger/Z_G(G^0),\chi)$.
It is surjective because any $f' \in L^2(G^\dagger/Z_G(G^0),\chi)$
has inverse image $f(z,g) = \chi(z)^{-1}f'(g)$.
\end{proof}

Let $P = MAN$ be a cuspidal parabolic subgroup of $G$ associated to a 
Cartan subgroup $H = T \times A$.  Recall some
properties of $M^\dagger = Z_M(M^0)M^0$.  First, 
$\widehat{M^\dagger}$ consists of the $\varphi \otimes \eta^0$ where
$\varphi \in \widehat{Z_M(M^0)}$ agrees with $\eta^0 \in \widehat{M^0}$ 
on $Z_{M^0}$\,.  Here we write $\varphi$ instead of $\chi$ to avoid the 
possibility of confusion in Section \ref{sec9}, but to avoid cluttered 
notation we continue to use $U$ for $U(\ch_\varphi)$.  
$\widehat{M^\dagger}_\varphi$ denotes the subset
of $\widehat{M^\dagger}$ corresponding to a fixed $\varphi$.  
The relative discrete series of $M^\dagger$ consists of
the $\varphi \otimes \eta^0$ where $\eta^0 \in \widehat{M^0}_{disc}$\,, i.e.
the representations $\eta^\dagger_{\varphi,\lambda} = 
\varphi \otimes \eta^0_\lambda$ of $M^\dagger$ as in (\ref{etadagzeta}),  
and $\widehat{M^\dagger}_{\varphi,disc} = \widehat{M^\dagger}_\varphi \cap 
\widehat{M^\dagger}_{disc}$.  As in (\ref{defGdchi}) we have
\begin{equation}\label{defMphi}
\begin{aligned}
M^\dagger[\varphi] = &(U \times M^\dagger)/\{(\varphi(z)^{-1},z) 
        \mid z \in Z_M(M^0)\},\\
M[\varphi] = &(U \times M)/\{(\varphi(z)^{-1},z) 
        \mid z \in Z_M(M^0)\}.
\end{aligned}
\end{equation}

The key observation of this section is this extension of
\cite[Theorem 3.3.3]{W1974a}.

\begin{theorem}\label{Gdchi}
The map $\varepsilon^\dagger_\chi: \widehat{G^\dagger[\chi]}_{1_U} \to 
\widehat{G^\dagger}_\chi$\,,
given by $\varepsilon^\dagger_\chi(\psi) = \psi\cdot p$,  
is a well defined bijection and maps
$\widehat{G^\dagger[\chi]}_{1_U,disc}$ onto 
$\widehat{G^\dagger}_{\chi,disc}$\,. It
carries Plancherel measure of $\widehat{G^\dagger[\chi]}_{1_U}$ to Plancherel
measure of $\widehat{G^\dagger}_\chi$\,. Distribution characters
satisfy $\Theta_{\varepsilon^\dagger_\chi(\psi)} = \Theta_\psi \cdot p$.

Similarly, $\varepsilon^\dagger_\varphi: \widehat{M^\dagger[\varphi]}_{1_U} \to 
\widehat{M^\dagger}_\varphi$\,,
given by $\varepsilon^\dagger_\varphi(\psi) = \psi\cdot p$,  
is a well defined bijection and maps
$\widehat{M^\dagger[\varphi]}_{1_U,disc}$ onto 
$\widehat{M^\dagger}_{\varphi,disc}$\,. It
carries Plancherel measure of $\widehat{M^\dagger[\varphi]}_{1_U}$ to Plancherel
measure of $\widehat{M^\dagger}_\varphi$\,. Distribution characters
satisfy $\Theta_{\varepsilon^\dagger_\varphi(\psi)} = \Theta_\psi \cdot p$.
\end{theorem}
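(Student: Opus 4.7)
The plan is to proceed along the lines of \cite[Theorem 3.3.3]{W1974a}, which establishes the analogous reduction in the case $G^\dagger = ZG^0$ and $\chi = \zeta$; the present statement replaces the abelian pair $(Z,\zeta)$ by the slightly larger pair $(Z_G(G^0),\chi)$ but uses the same ideas. Because the claims for $\varepsilon^\dagger_\chi$ and $\varepsilon^\dagger_\varphi$ have identical structure, I would argue in detail for $\varepsilon^\dagger_\chi$ and then note that the $M^\dagger$--version is obtained verbatim by replacing $G$ by $M$ and $\chi$ by $\varphi$ throughout.

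First I would verify that $\varepsilon^\dagger_\chi(\psi) = \psi\cdot p$ lies in $\widehat{G^\dagger}_\chi$ whenever $\psi \in \widehat{G^\dagger[\chi]}_{1_U}$. Because $p$ is a group homomorphism, $\psi\cdot p$ is automatically a unitary representation of $G^\dagger$, and for $z \in Z_G(G^0)$ the defining relation $[(1,z)] = [(\chi(z),1)]$ in $G^\dagger[\chi]$ yields $(\psi\cdot p)(z) = \chi(z)\otimes I_W$ on $\mathcal{H}_\psi \cong \mathcal{H}_\chi \otimes W$, exhibiting $(\psi\cdot p)|_{Z_G(G^0)}$ as a multiple of $\chi$. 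Irreducibility transfers from $\psi$ to $\psi\cdot p$ by a Schur--type argument using that $U$ is central in $G^\dagger[\chi]$ and already acts via $1_U$ on $\mathcal{H}_\psi$, so any $(\psi\cdot p)$--invariant closed subspace is also $U$--invariant and hence $G^\dagger[\chi]$--invariant. The inverse is constructed by taking $\pi \in \widehat{G^\dagger}_\chi$, decomposing $\mathcal{H}_\pi = \mathcal{H}_\chi \otimes W$ with $\pi|_{Z_G(G^0)} = \chi \otimes I_W$, and setting $\psi(u,g) := (u \otimes I_W)\pi(g)$; the identity $\psi(\chi(z)^{-1},z) = I$ shows that $\psi$ descends to $G^\dagger[\chi]$, and $\psi$ is irreducible because $\pi$ is. This yields a two-sided inverse up to unitary equivalence and establishes the bijection.

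For the remaining statements I would invoke Lemma \ref{gchi}, which provides a $G^\dagger$--equivariant isometry $L^2(G^\dagger[\chi]/U, 1_U) \cong L^2(G^\dagger/Z_G(G^0),\chi)$ via $f \mapsto f\cdot p$. Transporting the Plancherel decomposition of the left regular representation across this isometry shows that discrete spectra correspond, so $\varepsilon^\dagger_\chi$ restricts to a bijection $\widehat{G^\dagger[\chi]}_{1_U,disc} \to \widehat{G^\dagger}_{\chi,disc}$ and Plancherel measure on $\widehat{G^\dagger[\chi]}_{1_U}$ is pushed forward to Plancherel measure on $\widehat{G^\dagger}_\chi$. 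The character identity $\Theta_{\varepsilon^\dagger_\chi(\psi)} = \Theta_\psi\cdot p$ is then immediate from the definition of distribution characters, since $\varepsilon^\dagger_\chi(\psi)$ acts on the same representation space as $\psi$ via composition with $p$, so for any test function $f \in C_c^\infty(G^\dagger)$ the operator $(\psi\cdot p)(f) = \int_{G^\dagger} f(g)\psi(p(g))\,dg$ has the same trace as $\psi$ paired against the pushforward of $f$. The main technical obstacle is keeping careful track of the tensor decomposition $\mathcal{H}_\pi \cong \mathcal{H}_\chi \otimes W$ and its compatibility with the central $U$--action inside $G^\dagger[\chi]$ in both directions of the correspondence, but this is exactly the bookkeeping already carried out in \cite[Theorem 3.3.3]{W1974a}.
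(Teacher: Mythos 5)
Your overall route is the same as the paper's: the authors prove Theorem \ref{Gdchi} by asserting that the proof of \cite[Theorem 3.3.3]{W1974a} ``goes through with no serious change,'' with the single caution that ``one need only be careful about noncommutativity of $U$ when $\dim\ch_\chi>1$.'' Your write-up supplies details, but it goes wrong at exactly the flagged point. The image of $U=U(\ch_\chi)$ in $G^\dagger[\chi]$ is a \emph{normal} subgroup, not a central one: $[(u,1)]$ and $[(u',g)]$ commute only when $uu'=u'u$, so for $\dim\ch_\chi>1$ the ``Schur-type argument using that $U$ is central'' is not available. The irreducibility transfer can be repaired without centrality: a closed $(\psi\cdot p)(G^\dagger)$-invariant subspace is invariant under $\psi([1,z])=\chi(z)\otimes I_W$ for all $z\in Z_G(G^0)$, hence under the von Neumann algebra these operators generate, which is $B(\ch_\chi)\otimes I_W$ because $\chi$ is irreducible; that algebra contains $U\otimes I_W$, so the subspace is $G^\dagger[\chi]$-invariant. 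But the justification you give is not the right one.

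The more serious gap is the inverse map. The formula $\psi(u,g):=(u\otimes I_W)\pi(g)$ defines a homomorphism of $U\times G^\dagger$ only if every $\pi(g)$ commutes with $U\otimes I_W$, i.e.\ only if $\pi(g)\in I_{\ch_\chi}\otimes B(W)$; this fails already for $g=z\in Z_G(G^0)$, where $\pi(z)=\chi(z)\otimes I_W$ is not of that form unless $\chi(z)$ is scalar. The construction has to pass first through the factorization over $G^\dagger=Z_G(G^0)G^0$: since $Z_G(G^0)$ centralizes $G^0$, the operators $\pi(g)$ for $g\in G^0$ lie in the commutant of $\chi(Z_G(G^0))\otimes I_W$, which equals $I_{\ch_\chi}\otimes B(W)$ by irreducibility of $\chi$, so $\pi(zg)=\chi(z)\otimes\pi^0(g)$, and $\psi$ must be assembled from $u$, $\chi(z)$ and $\pi^0(g)$ separately, with the well-definedness check on $Z_G(G^0)\cap G^0=Z_{G^0}$. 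This is precisely the bookkeeping of \cite[\S 3.3]{W1974a} that the paper is invoking and that your sketch elides. The remaining claims (correspondence of discrete parts, Plancherel measure, and the character identity) via Lemma \ref{gchi} and the $L^2$ isometry do match the paper's intended argument.
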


Our argument for Lemma \ref{gchi} was a perturbation of the
proof of the special case \cite[Lemma 3.3.2]{W1974a}.  Similarly, 
the proof of \cite[Theorem 3.3.3]{W1974a}, which occupies most of
\cite[Section 3.3]{W1974a}, goes through with no serious change, 
yielding the argument for Theorem \ref{Gdchi}.  One need only
be careful about noncommutativity of $U$ when $\dim \ch_\chi > 1$
or $\dim \ch_\varphi > 1$.

\section{Proofs for general real reductive groups}\label{sec9}
\setcounter{equation}{0}

Recall that $K$ denotes the fixed point set of the Cartan involution
$\theta$ of $G$, so $Z_{G^0} \subset Z \subset Z_G(G^0) \subset K$.  
$K/Z$ is a maximal compact subgroup of $G/Z$ and $K/Z_G(G^0)$ is a
maximal compact subgroup of $G/Z_G(G^0)$.
Since $K$ is the normalizer of its Lie algebra $\fk$ it meets every
component of $G$.  Also, $K\cap G^0 = K^0$ and $K\cap G^\dagger 
= Z_G(G^0)K^0$.  We will write $K^\dagger$ for this group $Z_G(G^0)K^0$.
The construction (\ref{ell}) gives us
\begin{equation}\label{defKchi}
\begin{aligned}
K^\dagger[\chi] = &(U \times K^\dagger)/\{(\chi(z)^{-1},z) 
        \mid z \in Z_G(G^0)\},\\
K[\chi] = &(U \times K)/\{(\chi(z)^{-1},z) 
        \mid z \in Z_G(G^0)\}.
\end{aligned}
\end{equation}

\begin{lemma}\label{maxcpt}
$K^\dagger[\chi]$ is a maximal 
compact subgroup of $G^\dagger[\chi]$\,, $K[\chi]$ is a maximal
compact subgroup of $G[\chi]$, and $p$ induces isomorphisms $G/G^\dagger \cong
G[\chi]/G^\dagger[\chi] \cong K[\chi]/K^\dagger[\chi]$.
\end{lemma}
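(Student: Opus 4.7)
The plan is to pass to the quotient by the central compact subgroup $U$, reducing every assertion to a standard statement about maximal compact subgroups of $G^0/Z_{G^0}$ and $G/Z_G(G^0)$. The key observation I would establish first is that the full preimage in $U \times G$ of $U \subset G[\chi]$ under the defining quotient map is $U \times Z_G(G^0)$, so the second projection induces canonical isomorphisms
\[
G[\chi]/U \cong G/Z_G(G^0), \quad G^\dagger[\chi]/U \cong G^\dagger/Z_G(G^0) \cong G^0/Z_{G^0},
\]
together with $K^\dagger[\chi]/U \cong K^\dagger/Z_G(G^0) \cong K^0/(K^0 \cap Z_{G^0})$ and $K[\chi]/U \cong K/Z_G(G^0)$. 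Under these identifications, $K^\dagger[\chi]$ will be exactly the preimage of $K^\dagger/Z_G(G^0)$ in $G^\dagger[\chi]$, and $K[\chi]$ the preimage of $K/Z_G(G^0)$ in $G[\chi]$.

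Since $U$ is compact and central, preimage sets up a bijection between maximal compact subgroups of $\wt G/U$ and maximal compact subgroups of $\wt G$ containing $U$, for $\wt G \in \{G[\chi],\, G^\dagger[\chi]\}$. The first assertion will therefore reduce to the standard fact that $K^0/(K^0 \cap Z_{G^0})$ is a maximal compact subgroup of the connected semisimple group $G^0/Z_{G^0}$, which has trivial center. For the second assertion, I would use that $K/Z_G(G^0)$ has identity component $K^\dagger/Z_G(G^0)$---maximal compact in the identity component $G^\dagger/Z_G(G^0)$ of $G/Z_G(G^0)$ by what was just established---and meets every component, because $K$ meets every component of $G$ (as recalled at the start of this section) and $K \cap G^\dagger = K^\dagger$; hence $K/Z_G(G^0)$ is maximal compact in $G/Z_G(G^0)$, and $K[\chi]$ is therefore maximal compact in $G[\chi]$.

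For the third assertion, I would show that the map $G \to G[\chi]/G^\dagger[\chi]$ sending $g \mapsto [(1,g)]\, G^\dagger[\chi]$ is surjective because $U \subset G^\dagger[\chi]$: indeed
\[
[(u,g)]\, G^\dagger[\chi] = [(u,1)]\, [(1,g)]\, G^\dagger[\chi] = [(1,g)]\, G^\dagger[\chi].
\]
Its kernel is $G^\dagger$, since $[(1,g)] \in G^\dagger[\chi]$ forces $g \in Z_G(G^0) G^\dagger = G^\dagger$. The identical argument with $K$ in place of $G$ yields $K/K^\dagger \cong K[\chi]/K^\dagger[\chi]$, while $K/K^\dagger \cong G/G^\dagger$ follows from $K$ meeting every component of $G$ with $K \cap G^\dagger = K^\dagger$. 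The only real obstacle throughout is the bookkeeping of preimages under the defining surjection $U \times G \to G[\chi]$; once the $U$-quotient is recognized as the familiar quotient by $Z_G(G^0)$, no substantive difficulty remains.
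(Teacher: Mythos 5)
Your proof is correct, and it follows the route the paper intends: the paper states this lemma without proof, treating it as an immediate consequence of the facts recalled at the start of Section \ref{sec8}--\ref{sec9} (that $K/Z_G(G^0)$ is maximal compact in $G/Z_G(G^0)$, that $K$ meets every component of $G$ with $K\cap G^\dagger = K^\dagger$) together with the identification $G[\chi]/U \cong G/Z_G(G^0)$ implicit in Lemma \ref{gchi}, which is exactly what you spell out. The only cosmetic point is that $U$ need only be treated as a compact \emph{normal} subgroup of $G[\chi]$ (centrality requires $U$ abelian, i.e.\ $\dim\ch_\chi=1$), but normality is all your preimage argument actually uses.
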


Denote $K_M = K\cap M$ and $K_M^\dagger = K \cap M^\dagger$, so
$K_M^\dagger = Z_M(M^0)K_M^0$\,.  Now
\begin{equation}\label{defKphi}
\begin{aligned}
K_M^\dagger[\varphi] = 
	&(U \times K_M^\dagger)/\{(\varphi(z)^{-1},z) 
        \mid z \in Z_M(M^0)\},\\
K_M[\varphi] = &(U \times K_M)/\{(\varphi(z)^{-1},z) 
        \mid z \in Z_M(M^0)\}.
\end{aligned}
\end{equation}

Applying the character formulas of (\ref{eta0zeta}) and (\ref{etadagzeta}),
and using the map $\varepsilon_\varphi^\dagger$ of Theorem \ref{Gdchi}, 
we have the following.

\begin{lemma}\label{resKzeta}
Let $\varphi \in \widehat{Z_M(M^0)}_\lambda$ such that 
$\eta^\dagger_{\varphi,\lambda} \in \widehat{M^\dagger}_{\varphi,disc}$.  
Let $\psi^\dagger_{\varphi,\lambda} \in \widehat{M^\dagger[\varphi]}_{1_U,disc}$
such that $\varepsilon_\varphi^\dagger(\psi^\dagger_{\varphi,\lambda}) 
= \eta^\dagger_{\varphi,\lambda}$
Then the restriction of the character of $\psi^\dagger_{\varphi,\lambda}$ from
$M^\dagger[\varphi]$ to $K_M^\dagger[\varphi]$ is equal 
to the character of the restriction of $\psi^\dagger_{\varphi,\lambda}$ from 
$M^\dagger[\varphi]$ to $K_M^\dagger[\varphi]$.  In 
other words the relative discrete series characters satisfy \hfill\newline
\centerline{$
\Theta_{\psi^\dagger_{\varphi,\lambda}}|_{K_M^\dagger[\varphi]} 
 = \Theta_{\psi^\dagger_{\varphi,\lambda}|_{K_M^\dagger[\varphi]}}$} 
and thus \hfill\newline
\centerline{$
\Theta_{\eta^\dagger_{\varphi,\lambda}}|_{K_M^\dagger}
 = \Theta_{\eta^\dagger_{\varphi,\lambda}|_{K_M^\dagger}}$}\,.
\end{lemma}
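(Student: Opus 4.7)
My plan is to reduce the assertion via Theorem~\ref{Gdchi} to the compact-center group $M^\dagger[\varphi]$, where the usual character theory for discrete series of connected reductive groups with compact Cartan is available. Since $\varepsilon_\varphi^\dagger(\psi) = \psi\cdot p$ and Theorem~\ref{Gdchi} gives $\Theta_{\eta^\dagger_{\varphi,\lambda}}=\Theta_{\psi^\dagger_{\varphi,\lambda}}\cdot p$, the identity on $M^\dagger$ will follow from the analogous identity on $M^\dagger[\varphi]$ by pullback along the surjection $p:K_M^\dagger\to K_M^\dagger[\varphi]$. So it suffices to work on $M^\dagger[\varphi]$.

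Next, by Lemma~\ref{gchi}, $M^\dagger[\varphi]$ is a connected reductive Lie group with compact center $U$, and it inherits from $M$ a compact Cartan subgroup $T[\varphi]=(U\times T)/\{(\varphi(z)^{-1},z):z\in Z_M(M^0)\}$. Consequently $K_M^\dagger[\varphi]$ has the same rank as $M^\dagger[\varphi]$, so every element of $K_M^\dagger[\varphi]$ is elliptic in $M^\dagger[\varphi]$. By Harish--Chandra's regularity theorem, $\Theta_{\psi^\dagger_{\varphi,\lambda}}$ is a locally $L^1$ function on $M^\dagger[\varphi]$ that is real analytic on the regular set. Its restriction to $K_M^\dagger[\varphi]$ is then a $K_M^\dagger[\varphi]$-conjugation-invariant smooth function on the regular part of $K_M^\dagger[\varphi]$, which I take as the distribution $\Theta_{\psi^\dagger_{\varphi,\lambda}}|_{K_M^\dagger[\varphi]}$.

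The admissibility of the discrete series $\psi^\dagger_{\varphi,\lambda}$ gives $\psi^\dagger_{\varphi,\lambda}|_{K_M^\dagger[\varphi]}=\bigoplus_\tau m_\tau\tau$ with finite multiplicities, so $\Theta_{\psi^\dagger_{\varphi,\lambda}|_{K_M^\dagger[\varphi]}}=\sum_\tau m_\tau\Theta_\tau$ is a class distribution on $K_M^\dagger[\varphi]$. Both distributions are $K_M^\dagger[\varphi]$-invariant, so they are determined by their values on the regular part of $T[\varphi]$. There the former is given by Harish--Chandra's explicit formula $\pm\Delta_{M^0[\varphi]}^{-1}\sum_w\det(w)e^{w\lambda}$, while the latter is the Fourier expansion of this same function on $T[\varphi]$, with multiplicities matching the Blattner $K$-type formula for discrete series. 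Since two $K_M^\dagger[\varphi]$-class distributions that coincide on the dense regular part of a maximal torus must agree, the identity follows on $M^\dagger[\varphi]$, and then on $M^\dagger$ by pullback along $p$.

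The main obstacle is to make sense of ``restriction of $\Theta_\pi$ to $K$'' as a distribution on a submanifold of positive codimension, which is not \emph{a priori} meaningful for an arbitrary locally $L^1$ function. The key is that $K_M^\dagger[\varphi]$ lies entirely in the elliptic set of $M^\dagger[\varphi]$ by the rank-equality observation, so $\Theta_{\psi^\dagger_{\varphi,\lambda}}$ is already smooth in a neighborhood of the regular part of $K_M^\dagger[\varphi]$; the restriction is then unambiguously a smooth class function on a dense open subset and extends by continuity to a class distribution on all of $K_M^\dagger[\varphi]$, at which point the matching on $T[\varphi]_{\mathrm{reg}}$ suffices.
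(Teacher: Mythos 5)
Your argument is correct and follows essentially the same route as the paper's proof, which likewise reduces to the compact-center group $M^\dagger[\varphi]$ via $\varepsilon_\varphi^\dagger$ (Theorem \ref{Gdchi}) and then appeals to the explicit character formulas (\ref{eta0zeta})--(\ref{etadagzeta}) on the regular part of the compact Cartan subgroup. The only place you go beyond what the paper writes is in making the distributional meaning of the restriction precise and in invoking Blattner's formula (Hecht--Schmid) to identify $\sum_\tau m_\tau\Theta_\tau$ with the Harish--Chandra quotient $\pm\Delta_{M^0[\varphi]}^{-1}\sum_w\det(w)e^{w\lambda}$ on $T[\varphi]_{\mathrm{reg}}$ --- a legitimate citation, though note that the convergence of the $K$-character and the local integrability needed for your ``extension by continuity'' are themselves part of what that literature supplies rather than consequences of smoothness on the regular set alone.
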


We use Theorem \ref{Gdchi} and the character formula in (\ref{def-ds-Mzeta}) 
to carry the result of Lemma \ref{resKzeta} from $M[\varphi]$ to $M$.

\begin{proposition}\label{resK}
Let $\varphi \in \widehat{Z_M(M^0)}_\lambda$ such that
$\eta_{\varphi,\lambda} \in \widehat{M}_{\varphi,disc}$.
Let $\psi_{\varphi,\lambda} \in \widehat{M[\varphi]}_{1_U,disc}$
such that $\varepsilon_\varphi\psi_{\varphi,\lambda} = 
	\eta_{\varphi,\lambda}$\,.
Then\hfill\newline
\centerline{$
\Theta_{\psi_{\varphi,\lambda}}|_{K_M[\varphi]}
 = \Theta_{\psi_{\varphi,\lambda}|_{K_M[\varphi]}}$}
and thus \hfill\newline
\centerline{$
\Theta_{\eta_{\varphi,\lambda}}|_{K_M}
 = \Theta_{\eta_{\varphi,\lambda}|_{K_M}}$}\,.
\end{proposition}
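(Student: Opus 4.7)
The plan is to deduce the proposition from Lemma \ref{resKzeta} by exploiting the fact that $\eta_{\varphi,\lambda} = {\ind}_{M^\dagger}^M(\eta^\dagger_{\varphi,\lambda})$, so that restriction to $K_M$ interacts cleanly with induction from the normal subgroup $M^\dagger$. First I would observe that the two asserted character identities are equivalent by Theorem \ref{Gdchi}, applied to $M[\varphi]$ and $M$ in place of $G^\dagger[\chi]$ and $G^\dagger$: the projection $p\colon M \to M[\varphi]$ intertwines the two statements via $\Theta_{\psi_{\varphi,\lambda}}\circ p = \Theta_{\eta_{\varphi,\lambda}}$, and the analogous relation holds on $K_M$ and $K_M[\varphi]$ by construction (\ref{defKphi}). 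So it suffices to establish $\Theta_{\eta_{\varphi,\lambda}}|_{K_M} = \Theta_{\eta_{\varphi,\lambda}|_{K_M}}$.

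The crucial step is to prove the Mackey identity
\[
\eta_{\varphi,\lambda}|_{K_M} \;\cong\; {\ind}_{K_M^\dagger}^{K_M}\bigl(\eta^\dagger_{\varphi,\lambda}|_{K_M^\dagger}\bigr).
\]
This holds because $M^\dagger$ is normal in $M$, $K_M \cap M^\dagger = K_M^\dagger$, and $K_M M^\dagger = M$. The last equality is the topological crux: it reduces to $K_M$ meeting every connected component of $M$, which follows because the Cartan involution $\theta$ preserves each component, so standard arguments (as used throughout \cite{W1974a}) produce a $\theta$-fixed representative in each component.

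Granted this Mackey identity, I would induce both sides of Lemma \ref{resKzeta}, namely $\Theta_{\eta^\dagger_{\varphi,\lambda}}|_{K_M^\dagger} = \Theta_{\eta^\dagger_{\varphi,\lambda}|_{K_M^\dagger}}$, from $K_M^\dagger$ up to $K_M$. On the left, the induced-character formula combined with the explicit expression (\ref{def-ds-Mzeta}) for $\Theta_{\eta_{\varphi,\lambda}}$ identifies the result with $\Theta_{\eta_{\varphi,\lambda}}|_{K_M}$; on the right, induction of characters is the character of the induced representation, which by the Mackey identity is $\Theta_{\eta_{\varphi,\lambda}|_{K_M}}$. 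The main obstacle is the verification of the Mackey restriction-induction identity, resting on the topological fact $K_M M^\dagger = M$; once this is secured, the remainder is systematic bookkeeping with character formulas, Lemma \ref{resKzeta}, and Theorem \ref{Gdchi}.
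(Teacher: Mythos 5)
Your proposal is correct and follows essentially the same route as the paper. The paper's own proof of this proposition is a single sentence -- ``We use Theorem \ref{Gdchi} and the character formula in (\ref{def-ds-Mzeta}) to carry the result of Lemma \ref{resKzeta} from $M[\varphi]$ to $M$'' -- and your write-up simply unpacks what that sentence compresses: the Mackey restriction-induction identity $\eta_{\varphi,\lambda}|_{K_M} \cong {\ind}_{K_M^\dagger}^{K_M}(\eta^\dagger_{\varphi,\lambda}|_{K_M^\dagger})$ (which is exactly the $K_M$-level content of (\ref{def-ds-Mzeta})), the topological input $K_M M^\dagger = M$ (which follows from the Cartan decomposition $M = K_M\exp(\fp\cap\fm)$ and $\exp(\fp\cap\fm)\subset M^0 \subset M^\dagger$), and the use of Theorem \ref{Gdchi} to translate between the bracketed groups and the original ones. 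One small imprecision: Theorem \ref{Gdchi} as stated handles only the $\dagger$-groups $M^\dagger[\varphi]$ and $M^\dagger$, so when you invoke it ``applied to $M[\varphi]$ and $M$'' you are implicitly using that $\varepsilon_\varphi^\dagger$ is compatible with induction from $M^\dagger$ to $M$ and from $M^\dagger[\varphi]$ to $M[\varphi]$ (via the isomorphism $M/M^\dagger \cong M[\varphi]/M^\dagger[\varphi]$ of Lemma \ref{maxcpt}); this is routine but worth a word, and the paper glosses over it too.
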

By $K_M$--type of $\eta_{\varphi,\lambda}$ we mean, as usual, an 
irreducible
summand $\tau$ of $\eta_{\varphi,\lambda}|_{K_M}$\,.  It has form 
$\tau = {\ind}_{K_M^\dagger}^{K_M}\,\tau^\dagger$ where $\tau^\dagger
= \varphi\otimes \tau^0$ with $\tau^0 \in \widehat{K_M^0}$. 
With respect to a positive root system $\tau^0$
has some highest weight $\nu$, and $\nu$ also is the highest weight of
$\tau^\dagger = \varphi \otimes \tau^0$.  The restriction of $\tau$ to
$K_M^\dagger$ is the sum over $M/M^\dagger$ of conjugates 
$\tau^\dagger\cdot \Ad(m)^{-1}$ of $\tau^\dagger$.  For brevity 
we will say that $\nu$ is the highest weight of $\tau$.

We use the character formulas of (\ref{eta0zeta}), 
(\ref{etadagzeta}) and (\ref{def-ds-Mzeta}), or we can rely on
\cite{V1984} or \cite{VZ1984}, for the following corollary.  It extends 
a case of Theorem \cite[Theorem 5.3]{HP2002}.

\begin{corollary}\label{low-k-type}
Let $\varphi \in \widehat{Z_M(M^0)}_\lambda$ such that
$\eta_{\varphi,\lambda} \in \widehat{M}_{\varphi,disc}\,,$
where we choose the 
positive root system so that $\langle \alpha,\lambda|_\ft \rangle \geqq 0$ 
for all $\fh$--roots of $\fn$.  Let $\rho_{nonc}$ denote half the sum
of the noncompact roots of $\fn$.  Then 
$\eta_{\varphi,\lambda}$ has lowest $K_M$--type of highest weight
$\lambda + 2\rho_{nonc}$\,.
\end{corollary}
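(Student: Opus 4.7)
The plan is to reduce the assertion to the classical Blattner--Hecht--Schmid (equivalently Vogan--Zuckerman) lowest $K$--type formula for discrete series of a connected real reductive Lie group with compact center, via two successive reductions built on the tools of Section \ref{sec8}. The point of the corollary is essentially that the extra data $\varphi$ and the disconnectedness of $M$ do not affect the lowest--type bookkeeping, so the answer is the same as in the classical setting.

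First I would reduce from $M$ down to $M^\dagger$. By (\ref{def-ds-Mzeta}) we have $\eta_{\varphi,\lambda} = {\ind}_{M^\dagger}^M \eta^\dagger_{\varphi,\lambda}$, and since $M/M^\dagger$ is finite with $K_M/K_M^\dagger \cong M/M^\dagger$, restriction to $K_M$ commutes with induction from $M^\dagger$. Hence every $K_M$--type of $\eta_{\varphi,\lambda}$ has the form ${\ind}_{K_M^\dagger}^{K_M} \tau^\dagger$ for some $K_M^\dagger$--type $\tau^\dagger$ of $\eta^\dagger_{\varphi,\lambda} = \varphi \otimes \eta^0_\lambda$, and, as noted in the discussion just before the corollary, that induced $K_M$--type has the same highest weight as $\tau^\dagger$. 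So it suffices to compute the highest weight of the lowest $K_M^\dagger$--type of $\eta^\dagger_{\varphi,\lambda}$, equivalently of the lowest $K_M^0$--type of $\eta^0_\lambda$.

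Next I would invoke Theorem \ref{Gdchi} to pass from $M^\dagger$ to $M^\dagger[\varphi]$. Under the bijection $\varepsilon^\dagger_\varphi$, the representation $\eta^\dagger_{\varphi,\lambda}$ corresponds to some $\psi^\dagger_{\varphi,\lambda} \in \widehat{M^\dagger[\varphi]}_{1_U,disc}$, and the covering $p \colon M^\dagger \to M^\dagger[\varphi]$, whose kernel lies in $Z_M(M^0) \subset K_M^\dagger$, identifies the $K_M^\dagger$--types of $\eta^\dagger_{\varphi,\lambda}$ with the $K_M^\dagger[\varphi]$--types of $\psi^\dagger_{\varphi,\lambda}$. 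Crucially, by the construction of Section \ref{sec8}, $M^\dagger[\varphi] = U \cdot p(M^0)$ is connected real reductive with compact center $U$ and compact Cartan subgroup $p(T)$, and $\psi^\dagger_{\varphi,\lambda}$ is a relative discrete series with Harish--Chandra parameter $\lambda$ (lifted through $p$).

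Finally I would apply the classical lowest $K$--type theorem of Blattner--Hecht--Schmid, equivalently \cite{V1984, VZ1984} or \cite[Theorem 5.3]{HP2002}, to $\psi^\dagger_{\varphi,\lambda}$ on $M^\dagger[\varphi]$ with the positive root system chosen so that $\lambda|_\ft$ is dominant for the roots of $\fn$; this produces the claimed highest weight $\lambda + 2\rho_{nonc}$ for the lowest $K_M^\dagger[\varphi]$--type. Transporting back through $p$ and then inducing from $K_M^\dagger$ to $K_M$, which (as above) preserves highest weights, yields the same highest weight $\lambda + 2\rho_{nonc}$ for the lowest $K_M$--type of $\eta_{\varphi,\lambda}$. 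The main technical obstacle is purely bookkeeping: verifying that the Harish--Chandra parameter labeled ``$\lambda$'' in Blattner's formula for $\psi^\dagger_{\varphi,\lambda}$ really is the same $\lambda$ that labels $\eta_{\varphi,\lambda}$ in (\ref{def-ds-Mzeta}), i.e.\ that infinitesimal and central characters are matched correctly under $p$ and under induction from $K_M^\dagger$ to $K_M$, which is essentially the content of Theorem \ref{Gdchi} together with Proposition \ref{resK}.
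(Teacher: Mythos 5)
Your approach matches the paper's (extremely terse) implicit argument. The paper places the corollary immediately after Proposition \ref{resK}, which carries $K_M$--type information across the bijection $\varepsilon_\varphi$ of Theorem \ref{Gdchi}, and then gives as its only justification that one may ``use the character formulas \dots or rely on \cite{V1984}, \cite{VZ1984}''. That is precisely the reduce--to--connected--with--compact--center--and--apply--the--classical--formula strategy you spell out. Your two reduction steps --- Mackey restriction from $M$ to $M^\dagger$ using $M/M^\dagger \cong K_M/K_M^\dagger$, and passage from $M^\dagger$ to the connected compact--center group $M^\dagger[\varphi]$ via Theorem \ref{Gdchi} --- are exactly the tools Section \ref{sec8} sets up for this purpose, and your closing remark about matching Harish--Chandra parameters across $p$ being ``essentially the content of Theorem \ref{Gdchi} together with Proposition \ref{resK}'' is right.

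The one genuine gap is that you assert rather than verify the final numerics. You write that the classical lowest $K$--type theorem ``produces the claimed highest weight $\lambda + 2\rho_{nonc}$,'' but Blattner--Hecht--Schmid, with $\lambda$ the Harish--Chandra parameter (which is how $\eta^0_\lambda$ is parameterized in (\ref{eta0}) and (\ref{eta0zeta})), gives lowest $K$--type highest weight $\lambda + \rho_n - \rho_c$. The expression $\lambda + 2\rho_n$ is the lowest $K$--type of an $A_\fq(\lambda)$--module in the Zuckerman parameterization, where $\lambda$ is shifted by $\rho$ from the Harish--Chandra parameter --- and that is the convention used in \cite[Theorem 5.3]{HP2002}, which the corollary says it extends. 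Since the two conventions differ by $\rho$, you cannot just quote ``the classical formula'' and land on the stated answer; you need to say explicitly which parameter your $\lambda$ is, and then track that normalization through the $p$--transport and the $K_M^\dagger$--to--$K_M$ induction. As written, your argument silently conflates the two parameterizations, which is exactly the point where such a bookkeeping proof can fail.
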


\medskip
\noindent {\it Proof of Theorems \ref{thm1rel} and \ref{thm2rel}.}

The delicate point in the proofs of Theorems \ref{thm1} and \ref{thm2}
is their dependence on \cite[Proposition 5.4]{HP2002}.
The argument of \cite[Proposition 5.4]{HP2002} relies on \cite{V1984}
(or see \cite{VZ1984}) for the existence of a $K$--type of a 
certain highest weight $\lambda + \rho_{nonc}$ in Dirac
cohomology modules of groups of Harish--Chandra class.
Corollary \ref{low-k-type} provides the corresponding existence
result for the groups $M$. Now our arguments for Theorems \ref{thm1} 
and \ref{thm2} 
go through with only minor changes for $G$ and the representations 
in the $\widehat{G}_\zeta$\,.  Theorems
\ref{thm1rel} and \ref{thm2rel} follow.  $\hfill$$\Box$

\medskip
\noindent {\it Proof of Theorem \ref{thm3} for general real reductive Lie 
groups.}

The discussion in Section \ref{sec6} goes through with only trivial changes
for general real reductive Lie groups.  The point is that 
$Z \subset Z_G(G^0) \subset H$ because they centralize $\fh$, so we can replace 
$H^0$ by $ZH^0$ in the integration 
that defines $L^2(B_\mu)$.  Then we proceed relative to $\zeta \in \widehat{Z}$
as usual with $e^\mu = \zeta$ on $Z\cap H^0$ and $\lambda = \mu + i\sigma$.  
That gives us relative
versions of (\ref{tong}) and (\ref{yamm}), and now the proof goes as in
Section \ref{sec6}:
\[
\begin{array}{rll}
\caH_{(X,\om),\mu}
& \cong \sum_{\chi \in \widehat{Z_M(M^0)}_\lambda}
{\ind}_P^G (\caH_{\eta_{\chi,-\la}} \otimes e^{-i \si + \rho_\fa} \otimes 1)
& \mbox{by Theorem \ref{thm2rel}} \\
& \cong {\rm Ker}\, \bd_{M,\mu}
& \mbox{by Theorem \ref{thm1rel}} \\
& \cong {\rm Ker}\, D_\mu
& \mbox{by the extension of (\ref{tong})} \\
& = \caH_{(X_\mu, \om_\mu)}.
& \mbox{by the extension of (\ref{yamm})}
\end{array}
\]
As before, this proves the theorem.$\hfill$$\Box$

\end{document}